\documentclass{amsart}


\usepackage{amsmath,amsopn,amssymb,amsthm,mathtools,mathrsfs,yhmath,tensor,geometry,graphicx,stmaryrd,tabulary,booktabs}

\usepackage{xcolor}

\usepackage{accents}
	\newcommand{\ubar}[1]{\underaccent{\bar}{#1}}
	
\usepackage{upgreek,textgreek}
	
\usepackage{hyperref}
	

\usepackage{lipsum} 

\usepackage{parskip}[=v1]
 
\usepackage{tikz}
\usetikzlibrary{calc}

\usepackage{tikz}

\usepackage{pgfplots}
\pgfplotsset{compat=1.17}
\usetikzlibrary{quotes,angles}

\usepackage{booktabs}



\textwidth 15.00cm \textheight 20cm \topmargin 0.0cm
\oddsidemargin 0.5cm \evensidemargin 0.5cm
\parskip 0.0cm

\setcounter{section}{0}

\pagestyle{plain} \footskip=50pt


\newtheorem*{theorem*}{Theorem}

\newtheorem*{prop*}{Proposition}

\newtheorem*{lemma*}{Lemma}

\newtheorem*{defi*}{Definition}

\newtheorem*{rmk*}{Remark}

\newtheorem*{cor*}{Corollary}

\newtheorem*{claim*}{Claim}


\newtheorem{ftheorem}{Theorem}[section]
\newtheorem*{ftheorem*}{Theorem}
		
\newtheorem{fprop}{Proposition}[section]
\newtheorem*{fprop*}{Proposition}
	
\newtheorem{flemma}{Lemma}[section]
\newtheorem*{flemma*}{Lemma}

\newtheorem{fdefi}{Definition}[section]
\newtheorem*{fdefi*}{Definition}

\newtheorem*{frmk*}{Remark}

\newtheorem{fcor}{Corollary}[section]
\newtheorem*{fcor*}{Corollary}

\newtheorem*{fclaim*}{Claim}

\newtheorem*{fconj*}{Theorem}

\newtheorem{fquest}{Question}[section]
\newtheorem*{fquest*}{Question}



\newcommand{\Address}{{
  \bigskip
  \footnotesize

  Chao-Ming Lin, \textsc{Department of Mathematics, University of California-Irvine, CA}\par\nopagebreak
  \textit{E-mail address:} \texttt{chaominl@uci.edu}\par\nopagebreak
  \textit{Personal Website:} \texttt{https://chaominl.github.io}

}}

\newcommand{\pa}{\partial}

\DeclareMathOperator\osc{osc}
\DeclareMathOperator\Arg{Arg}

\DeclareMathOperator\Tr{Tr}

\DeclareMathOperator\Id{Id}

\DeclareMathOperator\Hess{Hess}


%

\begin{document}

\title{Deformed Hermitian--Yang--Mills Equation on Compact Hermitian Manifolds}
\author{Chao-Ming Lin}

\begin{abstract}
Let $(X, \omega)$ be a compact connected Hermitian manifold of dimension $n$. We consider the Bott--Chern cohomology and let $[\chi ] \in H^{1,1}_{\text{BC}}(X; \mathbb{R})$. We study the deformed Hermitian--Yang--Mills equation, which is the following nonlinear elliptic equation $\sum_{i} \arctan (\lambda_i) = h(x)$, where $\lambda_i$ are the eigenvalues of $\chi$ with respect to $\omega$. 
\end{abstract}
\maketitle
\vspace{-0.6cm}

\section{Introduction}
Let $X$ be a connected compact complex manifold of dimension $n$ with $\omega$ the Hermitian form on $X$ and $[\chi_0 ] \in H^{1,1}_{\text{BC}}(X; \mathbb{R}) \coloneqq H^{1,1}_{\text{BC}}(X; \mathbb{C}) \cap H^{1,1}(X; \mathbb{R})$. Here, $H^{p, q}_{\text{BC}} (X;\mathbb{C}) \coloneqq \frac{\Omega^{p, q}(X) \cap \ker d}{\sqrt{-1} \partial \bar{\partial} \Omega^{p-1, q-1}(X) }$ is the Bott--Chern cohomology class of $X$.\bigskip

The deformed Hermitian--Yang--Mills equation, which will be abbreviated as dHYM equation later on, was discovered around the same time by Mariño--Minasian--Moore--Strominger \cite{marino2000nonlinear} and Leung--Yau--Zaslow \cite{leung2000special} using different points of view. Mariño--Minasian--Moore--Strominger \cite{marino2000nonlinear} found out that the dHYM equation is the requirement for a $D$-brane on the $B$-model of mirror symmetry to be supersymmetric. It was shown by Leung--Yau--Zaslow \cite{leung2000special} that, in the semi-flat model of mirror symmetry, solutions of the dHYM equation are related via the Fourier--Mukai transform to special Lagrangian submanifolds of the mirror. The study of the dHYM equation for a holomorphic line bundle over a compact Kähler manifold was initiated by Jacob--Yau \cite{jacob2017special}; they introduced the following problem in \cite{jacob2017special}: if $\omega$ is a Kähler form, does there exists a real smooth, closed $(1,1)$ form $\chi \in [\chi_0]$? Such that, 
\begin{align}
\label{eq:1.1}
\Im \left ( \omega + \sqrt{-1} \chi \right )^n &= \tan \left (  \hat{\theta}   \right ) \Re \left ( \omega + \sqrt{-1} \chi \right )^n, \tag{1.1}
\end{align}where $\Im, \Re$ are the imaginary and real parts respectively, and $\hat{\theta}$ is a topological constant determined by $[\omega], [\chi_0]$. In the supercritical phase case, which means that the phase $\hat{\theta}$ satisfies $\hat{\theta} > \frac{n-2}{2} \pi$, the dHYM equation on a compact Kähler manifold was solved by Collins--Jacob--Yau \cite{collins20151} assuming a notion of $C$-subsolution. \bigskip

The main purpose of this work is to solve the dHYM equation on compact Hermitian manifolds. We should emphasize that there are many significant works which have been done on the Kähler case recently. Due to the space limitations, we will only list them here and will only be able to use some of the results therein. The interested reader is referred to \cite{chen2019j, collins2020stability, collins2018moment, collins2018deformed, han2019varepsilon, han2020stability, jacob2019weak, jacob2020deformed, pingali2019deformed, schlitzer2019deformed, takahashi2019collapsing, takahashi2020tan} and the references therein.\bigskip

According to \cite{jacob2017special}, equation (\ref{eq:1.1}) is actually equivalent to the following equation 
\begin{align}
\label{eq:1.2}
\Theta_{\omega}(\chi) \coloneqq \sum_{i = 1}^{n} \arctan (\lambda_i) = \hat{\theta}, \tag{1.2}
\end{align}where $ \lambda_i $ are the eigenvalues of $\omega^{-1} \chi$ and we require that the range of $\hat{\theta}$ has certain constraints which will be discussed later. Equation (\ref{eq:1.2}) is a natural generalization to compact Kähler manifolds of the following special Lagrangian equation
\begin{align*}
\Re \left (  \det \left (  \Id + \sqrt{-1} \Hess F \right )   \right ) = 0
\end{align*}
which is introduced by Harvey--Lawson \cite{harvey1982calibrated}. The special Lagrangian equation is also studied extensively; see, for instance, \cite{caffarelli1985dirichlet, collins2017concavity, nadirashvili2010singular, smoczyk2002mean, smoczyk2004longtime, wang2012mean, wang2013singular, wang2014hessian, yuan2002bernstein, yuan2006global} and the references therein. \bigskip

Here, we would like to ask the following question on a compact Hermitian manifold.

\hypertarget{Q:1.1}{\begin{fquest}} Does there exist a $\chi \in [\chi_0 ] \in H^{1,1}_{\text{BC}}(X; \mathbb{R})$ such that 
\begin{align*}
\label{eq:1.3}
\Theta_{\omega}(\chi) = \sum_{i = 1}^{n} \arctan (\lambda_i) = h(x)? \tag{1.3}
\end{align*}Here $h\colon X \rightarrow \left [ (n-2)\frac{\pi}{2} + \epsilon_0, n \frac{\pi}{2} \right )$ with $\epsilon_0 >0$ and $\{ \lambda_i \}$ are the eigenvalues of $\Lambda$, where $\Lambda^j_k \coloneqq \omega^{j \bar{l}} \chi_{k \bar{l}}$.
\end{fquest}

In this paper, we work under the assumption of a $C$-subsolution, which is introduced in Székelyhidi \cite{szekelyhidi2018fully} (See also Guan \cite{guan2014second}). In \hyperlink{S:3}{section 3}, we first prove the crucial a priori estimates to all orders similar to \cite{jacob2017special}. The difficulty will mainly be the $C^2$ estimates, since we are dealing with Hermitian metric in stead of Kähler metric, thus torsions will pop up. We apply maximum principle to prove the $C^2$ estimate. To simply the proof, we use symmetric functions and perturb the eigenvalues to apply the maximum principle.
\hypertarget{T:1.1}{\begin{ftheorem}[A priori estimates]}
Let $X$ be a connected compact complex manifold of dimension $n$ with $\omega$ the Hermitian form on $X$. Assume $u \colon X \rightarrow \mathbb{R}$ is a smooth function satisfying $\sup_X u = 0$, $[\chi_0 ] \in H^{1,1}_{\text{BC}}(X; \mathbb{R})$, and $\Theta_{\omega}(\chi_0 + \sqrt{-1} \partial \bar{\partial} u) = h(x)$, where $h \colon X \rightarrow [ (n-2)\frac{\pi}{2} + \epsilon_0, n\frac{\pi}{2})$. Suppose that there exists a $C$-subsolution $\ubar{u} \colon X \rightarrow \mathbb{R}$, then for every $\alpha \in (0,1)$, there exists a constant $C= C \left ( X, \omega, \alpha, \epsilon_0, h, \chi_0, \ubar{u}  \right )$ such that 
\begin{align*}
\label{eq:1.4}
\| u \|_{C^{2, \alpha}}\leq C \left ( X, \omega, \alpha, \epsilon_0, h, \chi_0, \ubar{u}   \right ). \tag{1.4}
\end{align*} 
\end{ftheorem}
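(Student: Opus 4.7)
The plan is to prove the estimate in the standard hierarchy $C^0 \to C^2 \to C^1 \to C^{2,\alpha}$, adapting the Kähler arguments of Jacob--Yau and Collins--Jacob--Yau to the Hermitian setting by carefully tracking the torsion terms that arise from $d\omega \neq 0$.

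First, I would establish the $C^0$ estimate via Székelyhidi's $C$-subsolution machinery. The strategy is to apply the Alexandrov--Bakelman--Pucci maximum principle to a suitable test function, e.g. $u - \underline{u} + \varepsilon |z|^2$ in coordinate charts near the minimum of $u - \underline{u}$, using that at any point where $u - \underline{u}$ attains a very negative value one can find a linearisation direction in which the $C$-subsolution condition forces a contradiction via the structural inequality from Proposition~5 of Székelyhidi's paper. Since the operator $F(\Lambda) = \sum_i \arctan \lambda_i$ is concave on the set $\{\Gamma : \Theta_\omega(\Gamma) > (n-2)\pi/2\}$ in the supercritical regime, the hypothesis $h \geq (n-2)\pi/2 + \epsilon_0$ ensures the argument applies uniformly. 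The Hermitian setting contributes only lower-order torsion terms that are absorbed.

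The main obstacle is the $C^2$ estimate. I would apply the maximum principle to a quantity of the form
\[
Q = \log \lambda_1(\Lambda) + \varphi(|\nabla u|^2) + \psi(u - \underline{u}),
\]
where $\lambda_1$ is the largest eigenvalue of $\Lambda$, and $\varphi, \psi$ are carefully chosen auxiliary functions (typically $\psi(t) = -At$ with $A$ large and $\varphi$ as in Hou--Ma--Wu). The difficulty specific to the Hermitian case is that $\lambda_1$ may not be smooth when eigenvalues collide, and commuting covariant derivatives of $\chi$ via the Chern connection produces torsion terms involving $T = \partial \omega$ which must be dominated. I would handle the non-smoothness by perturbing $\Lambda$ with a matrix $B$ having distinct eigenvalues near the maximum point, as in Székelyhidi, and then compute $\mathcal{L}(Q) = F^{i\bar j}\nabla_i \nabla_{\bar j} Q$ where $F^{i\bar j}$ is the linearised operator; the third-order terms from differentiating twice and using the equation cancel using the concavity of $F$ plus a Cauchy--Schwarz argument, while the torsion-generated terms are controlled by choosing $A$ large in $\psi$ and exploiting the $C$-subsolution inequality $F^{i\bar j}(\underline{\chi} - \chi)_{i\bar j} \geq \kappa(\lambda_1 - C)$ away from a uniform cone.

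Once $\|u\|_{C^0}$ and $\|\sqrt{-1}\partial\bar\partial u\|_{C^0}$ are controlled, the $C^1$ bound follows by a standard blow-up/Liouville argument, or by the Cherrier--Hanani gradient estimate using the boundedness of the complex Hessian on a Hermitian manifold. Finally, the $C^{2,\alpha}$ bound follows from the Evans--Krylov theorem applied to the concave operator $F$; one must verify that $F$ is uniformly elliptic on the range of $\Lambda$ determined by the $C^2$ bound, which is immediate since $F^{i\bar j} = (1 + \lambda_i^2)^{-1}\delta^{ij}$ in the $\Lambda$-diagonal frame, and these weights are bounded above and below once $\lambda_i$ are bounded. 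The Hermitian torsion contributes only a smooth lower-order perturbation that does not affect the Evans--Krylov conclusion, yielding the desired constant $C = C(X, \omega, \alpha, \epsilon_0, h, \chi_0, \underline{u})$.
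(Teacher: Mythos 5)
Your overall strategy — $C^0$ via Székelyhidi's ABP argument, $C^2$ by a maximum principle applied to $\log\lambda_{\max}$ plus Hou--Ma--Wu gradient correction with the eigenvalue perturbation trick, $C^1$ by blow-up, and $C^{2,\alpha}$ by Evans--Krylov --- matches the paper's roadmap. However, there is a substantive error in your analysis that surfaces at the $C^{2,\alpha}$ step and also mis-describes the mechanism behind the $C^2$ estimate.

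You assert twice that the operator $F(\Lambda)=\sum_i\arctan\lambda_i$ is concave on the supercritical set $\{\Theta_\omega(\Lambda)>(n-2)\tfrac{\pi}{2}\}$. That is false: $F$ is concave only on the \emph{hypercritical} set $\{\Theta_\omega(\Lambda)>(n-1)\tfrac{\pi}{2}\}$. On the genuinely supercritical range $\left[(n-2)\tfrac{\pi}{2}+\epsilon_0,\,n\tfrac{\pi}{2}\right)$ allowed by the hypothesis the Hessian $f_{ij}=-\delta_{ij}\,2\lambda_i/(1+\lambda_i^2)^2$ fails to be $\leq 0$ whenever some $\lambda_n<0$, and the paper explicitly flags this (it only records, via \hyperlink{L:2.3}{Lemma 2.3}(iii), that the \emph{level sets} $\partial\Gamma^\sigma$ are convex, a weaker property). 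Consequently your plan to apply Evans--Krylov ``to the concave operator $F$'' does not go through as stated: Evans--Krylov requires concavity of the operator, not of the level sets. The paper bridges this gap by one of two known devices --- either the extension trick of Wang / Tosatti--Wang--Weinkove--Yang, which lifts the equation to a genuinely concave real operator, or the Collins--Picard--Wu replacement of $F$ by a concave operator having the same level sets. Without one of these your $C^{2,\alpha}$ step is incomplete. The same over-reach appears in your sketch of the $C^2$ estimate, where you invoke ``concavity of $F$'' to cancel third-order terms: what the paper actually uses is the supercritical structural inequality $\lambda_{n-1}+\lambda_n\geq\tan(\epsilon_0/2)>0$ (and its consequences on $\sigma_k$) from \hyperlink{L:2.3}{Lemma 2.3} to factor the good and bad terms into complete squares with favorable sign, not concavity of $F$ as such. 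The remaining pieces of your proposal (ABP for $C^0$, the perturbation of $\Lambda$ to obtain distinct eigenvalues, the Dinew--Kołodziej blow-up for $C^1$, Schauder bootstrapping) agree with the paper.
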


Then we have the following Existence theorem for a special Hermitian form. Consider the constant $\hat{\Theta}_\omega \left (  \chi \right )$ defined by
\begin{align*}
\label{eq:1.5}
\hat{\Theta}_\omega \left ( \chi  \right ) &\coloneqq \Arg \left (  \int_X \left (  \omega + \sqrt{-1} \chi \right )^n   \right ), \tag{1.5}
\end{align*}where $\chi \in [\chi_0 ] \in H^{1,1}_{\text{BC}}(X; \mathbb{R})$. Here, we need to specify the branch cut.

\hypertarget{T:1.2}{\begin{ftheorem}[Existence theorem]}
Assume the Hermitian form $\omega$ satisfies $\partial \bar{\partial} \omega = 0 = \partial \bar{\partial} \left ( \omega^2 \right )$. Also, suppose that there exists a $C$-subsolution $\ubar{\chi} \coloneqq \chi_0 + \sqrt{-1} \partial \bar{\partial} \ubar{u}$ such that $\Theta_{\omega} \left ( \ubar{\chi} \right ) > (n-2)\frac{\pi}{2}$. Then there exists a unique smooth $(1,1)$-form $\chi \in [\chi_0]$ solving the deformed Hermitian--Yang--Mills equation
\begin{align*}
\label{eq:1.6}
\Theta_\omega \left ( \chi \right ) = \hat{\Theta}_\omega \left ( \ubar{\chi} \right ) = \hat{\Theta}_\omega \left ( \chi_0 \right ). \tag{1.6}
\end{align*}
\end{ftheorem}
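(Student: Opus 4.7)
The plan is to prove this by the continuity method, with Theorem 1.1 furnishing the closedness estimates. I would interpolate between the subsolution and the target by defining the one-parameter family of right-hand sides
\begin{align*}
h_t(x) \coloneqq (1-t)\,\Theta_\omega(\ubar{\chi})(x) + t\,\hat{\Theta}_\omega(\chi_0), \qquad t \in [0,1],
\end{align*}
so that $h_0 = \Theta_\omega(\ubar{\chi})$ is trivially solved by $u_0 = \ubar{u}$, while $h_1 \equiv \hat{\Theta}_\omega(\chi_0)$ is the constant target. Since $\Theta_\omega(\ubar{\chi}) > (n-2)\pi/2$ and $\hat{\Theta}_\omega(\chi_0) < n\pi/2$ (after fixing the branch cut appropriately), the interpolant $h_t$ stays uniformly inside $[(n-2)\pi/2 + \epsilon_0,\, n\pi/2)$, so Theorem 1.1 applies with a constant independent of $t$. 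Let $I$ be the subset of $t\in[0,1]$ for which there exists a smooth $u_t$ with $\sup_X u_t = 0$ satisfying $\Theta_\omega(\chi_0 + \sqrt{-1}\,\partial\bar\partial u_t) = h_t$ such that $\chi_0 + \sqrt{-1}\,\partial\bar\partial u_t$ is a $C$-subsolution; the goal is to show $I = [0,1]$.

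For openness at $t_0 \in I$, I would appeal to the implicit function theorem on $C^{2,\alpha}(X)/\mathbb{R}$. The linearization is the second-order operator $Lv = F^{j\bar{k}}\,v_{j\bar{k}}$ with positive-definite coefficients $F^{j\bar{k}} = \partial\Theta_\omega / \partial\chi_{j\bar{k}}$ (positivity coming from the supercritical phase), so $L$ is uniformly elliptic with kernel equal to the constants by the strong maximum principle. The Fredholm alternative dictates a one-dimensional cokernel, and the hypotheses $\partial\bar\partial\omega = 0 = \partial\bar\partial(\omega^2)$ are precisely what make $\hat{\Theta}_\omega$ a well-defined Bott--Chern invariant, which furnishes the integrability condition one needs. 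If the path $h_t$ is not automatically compatible at each $t$, I would modify it by introducing an auxiliary constant $c_t$ and solving $\Theta_\omega(\chi_t) = h_t + c_t$, with $c_t$ determined by solvability; the cohomological identity $\hat{\Theta}_\omega(\chi_0 + \sqrt{-1}\,\partial\bar\partial u_1) = \hat{\Theta}_\omega(\chi_0)$ at the endpoint forces $c_1 = 0$. For closedness, given $t_k \to t_\infty$ with $t_k \in I$, Theorem 1.1 provides a uniform $C^{2,\alpha}$ bound for $\{u_{t_k}\}$; passing to a subsequence, Evans--Krylov together with Schauder bootstrapping upgrades the limit to a smooth solution at $t_\infty$, and the $C$-subsolution property is preserved by continuity.

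The main obstacle I anticipate is the openness step, where the interaction between the cohomological compatibility and the Hermitian (non-K\"ahler) torsion is delicate: on a K\"ahler manifold $\hat{\Theta}_\omega$ is manifestly topological, but here it requires the full strength of $\partial\bar\partial\omega = 0 = \partial\bar\partial(\omega^2)$, and the path must be calibrated so that the linearized equation is solvable at each stage. Uniqueness of the resulting $\chi$ is then a standard maximum-principle argument: if $u_1, u_2$ both solve $\Theta_\omega(\chi_0 + \sqrt{-1}\,\partial\bar\partial u) = \hat{\Theta}_\omega(\chi_0)$ with $\sup_X u_i = 0$, then at a maximum point of $u_1 - u_2$ one has $\sqrt{-1}\,\partial\bar\partial(u_1 - u_2) \leq 0$, so monotonicity of $\arctan$ in each eigenvalue forces the Hessians to agree there, and the strong maximum principle applied to the linear elliptic operator comparing the two solutions yields $u_1 \equiv u_2$.
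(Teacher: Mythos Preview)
Your overall framework is right: continuity method with an auxiliary constant, openness by the implicit function theorem modulo constants, closedness from Theorem~1.1, and the identification of the role of $\partial\bar\partial\omega=0=\partial\bar\partial(\omega^2)$ as making $\hat\Theta_\omega$ a Bott--Chern invariant (this is exactly the paper's Lemma~4.1). The uniqueness argument via linearization and the strong maximum principle is also fine and matches the paper.

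The genuine gap is in the closedness step, and it is not where you anticipate. Once you introduce the constant $c_t$ and solve $\Theta_\omega(\chi_t)=h_t+c_t$, the quantity that must stay in the supercritical range and must admit the \emph{fixed} form $\ubar\chi$ as a $C$-subsolution is $h_t+c_t$, not $h_t$. Your assertion that ``the interpolant $h_t$ stays uniformly inside $[(n-2)\pi/2+\epsilon_0,\,n\pi/2)$'' is not enough; you never bound $c_t$. The maximum/minimum principle gives only $t\inf_X(\Theta_0-\hat\Theta_\omega)\le c_t\le t\sup_X(\Theta_0-\hat\Theta_\omega)$, and with a direct linear interpolation between $\Theta_0=\Theta_\omega(\ubar\chi)$ and the constant $\hat\Theta_\omega$, the upper bound on $c_t$ can be strictly positive (when $\sup_X\Theta_0>\hat\Theta_\omega$). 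At such times $h_t+c_t$ may exceed both $\Theta_0$ and $\hat\Theta_\omega$ at some points, and then the inequality $\sum_{l\neq j}\arctan(\mu_l)>h_t+c_t-\pi/2$ required for $\ubar\chi$ to be a $C$-subsolution (Lemma~2.4) need not hold. Without a fixed $C$-subsolution valid for all $t$, Theorem~1.1 does not give \emph{uniform} bounds, and closedness fails. Your remark that ``the $C$-subsolution property is preserved by continuity'' does not address this: the solution $\chi_t$ is trivially a $C$-subsolution of its own right-hand side, but the constant in Theorem~1.1 depends on the subsolution, so letting it vary destroys uniformity.

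The paper avoids this by a \emph{two-stage} continuity method. First it deforms from $\Theta_0$ to a smoothing $\Theta_1$ of $\max\{\Theta_0,\hat\Theta_\omega(\ubar\chi)\}$; because $\Theta_1\ge\Theta_0$, the maximum principle forces the auxiliary constant $b_t\le 0$, and a careful choice of $\Theta_1$ (flat near $\arg\min\Theta_0$) keeps $\Theta_1+b_t>(n-2)\pi/2$. Second, it deforms from $\Theta_1$ down to the constant $\hat\Theta_\omega$; here $c_t\le 0$ is obtained not from the maximum principle but from the cohomological identity (\ref{eq:4.9}), and $c_t\ge b_1$ follows from the minimum principle together with $\Theta_1\ge\hat\Theta_\omega$. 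In both stages $\ubar\chi$ remains a $C$-subsolution of the shifted target, and the supercritical phase is maintained, so Theorem~1.1 applies uniformly. If you want to repair your argument, you should either reproduce this two-step path or otherwise prove a priori that $c_t\le 0$ and $h_t+c_t>(n-2)\pi/2$ along your single path; neither is automatic.
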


The Kähler case was proved by Collins--Jacob--Yau \cite{collins20151} with the requirements of $C$-subsoution and supercritical phase condition. Chen \cite{chen2019j} also proved the existence theorem under a numerical condition and when the phase is more restrictive. \bigskip

To study Hermitian manifolds, we consider the condition $\partial \bar{\partial} \left (  \omega^k \right ) = 0$, which plays an important role. For example, if $k = 1$, that is, when a Hermitian form satisfies $\partial \bar{\partial} \omega = 0$, we call the form pluriclosed. Streets--Tian \cite{streets2010parabolic, streets2012generalized, streets2013regularity} studied a parabolic flow for pluriclosed forms, which is called pluriclosed flow. Tosatti--Weinkov \cite{tosatti2015evolution} and Tosatti--Weinkov--Yang \cite{tosatti2015collapsing} studied the Chern--Ricci flow on compact complex surfaces starting from a pluriclosed form. See also \cite{arroyo2019long, boling2016homogeneous, enrietti2015pluriclosed, jordan2020calabi, verbitsky2014rational, zhao2019strominger} and the references therein. If $k = n-1$, that is, when a Hermitian form satisfies $\partial \bar{\partial}  \left (\omega^{n-1} \right ) = 0$, then the corresponding Hermitian metric is called Gauduchon. Gauduchon \cite{gauduchon1977theoreme} proved that for a given Hermitian metric, there exists a Gauduchon metric in the same conformal class; the Gauduchon metric is unique up to a constant factor. See also \cite{li1987hermitian, lubke1995kobayashi} and the references therein. If $k = n-2$, that is, when a Hermitian form satisfies $\partial \bar{\partial}  \left (\omega^{n-2} \right ) = 0$, the form is called astheno-Kähler, which was introduced by Jost--Yau \cite{jost1993nonlinear} to establish the existence of Hermitian Harmonic maps. The astheno-Kähler condition turns out to be particularly interesting for many analytic arguments to be useful. For example, Tosatti--Weinkov \cite{tosatti2019hermitian} proved the Calabi--Yau theorems for Gauduchon and strongly Gauduchon metrics on the class of compact astheno-Kähler manifolds. Phong--Picard--Zhang \cite{phong2016anomaly, phong2017anomaly, phong2018anomaly, phong2018geometric, phong2019flow} studied the anomaly flow on a compact complex manifold, which admits a non-vanishing holomorphic $(n,0)$-form $\Omega$ and whose stationary points are astheno-Kähler metrics. See also \cite{fino2019astheno, fino2011astheno, latorre2017non, matsuo2009astheno, matsuo2001compact} and the references therein.\bigskip

The condition $\partial \bar{\partial} \left ( \omega^k \right ) = 0$ for all $1 \leq k \leq n-1$ was studied by Guan--Li \cite{guan2009complex} when proving the complex Monge--Ampère equation on closed complex $n$-dimensional Hermitian manifolds. The purpose is to carry out the continuity method. Our condition $\partial \bar{\partial} \omega = 0 = \partial \bar{\partial} \left ( \omega^2 \right )$ here is again a technical condition; we use this condition to make sure that the constant $\hat{\Theta}_\omega \left ( \chi \right )$ is fixed under the same cohomology class $[\chi]$. Then, we can construct a continuity path to apply the a priori estimates and solve equation (\ref{eq:1.6}); the details are in \hyperlink{S:4}{section 4}. Notice that the condition $\partial \bar{\partial} \left ( \omega^k \right ) = 0$ for $k =1, 2$ is sufficient to prove $\partial \bar{\partial} \left ( \omega^k \right ) = 0$ for all $1 \leq k \leq n-1$. See, for example, \cite{fino2011astheno}.\bigskip

There are Hermitian forms satisfying the condition $\partial \bar{\partial} \omega = 0 = \partial \bar{\partial}   ( \omega^2   )$. For example, on a compact complex surface, by Gauduchon \cite{gauduchon1977theoreme}, any Hermitian metric will have a Gauduchon metric in the same conformal class, which implies that the Gauduchon metric is pluriclosed. Moreover, since the manifold is a surface, the second equality will always hold. This observation will give us more examples. For instance, let $M$ be a Kähler manifold and $N$ be a compact complex surface, then $M \times N$ will also have a Hermitian form $\omega$ satisfying $\partial \bar{\partial} \omega = 0 = \partial \bar{\partial}  ( \omega^2   )$. The Hermitian form comes from pulling back the Kähler form on the Kähler manifold $M$ plus pulling back the Gauduchon metric on the compact complex surface $N$. Now, let us focus on the compact complex surface, as a corollary of \hyperlink{T:1.2}{the existence theorem}. We have the following.

\hypertarget{C:1.1}{\begin{fcor}}
Let $X$ be a compact complex surface equipped with a Hermitian metric $\omega$. Suppose that there exists a $C$-subsolution $\ubar{\chi} \coloneqq \chi_0 + \sqrt{-1} \partial \bar{\partial} \ubar{u}$ such that $\Theta_{\omega} \left ( \ubar{\chi} \right ) > 0$. Then there exists a pluriclosed Hermitian metric $\tilde{\omega}$ in the conformal class of $\omega$ and a unique smooth $(1,1)$-form $\chi \in [\chi_0]$ solving the deformed Hermitian--Yang--Mills equation
\begin{align*}
\label{eq:1.7}
\Theta_{\tilde{\omega}} \left ( \chi \right ) = \hat{\Theta}_{\tilde{\omega}} \left ( \ubar{\chi} \right ) = \hat{\Theta}_{\tilde{\omega}} \left ( \chi_0 \right ). \tag{1.7}
\end{align*}
\end{fcor}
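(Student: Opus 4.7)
The plan is to deduce Corollary 1.1 from Theorem 1.2 by replacing $\omega$ with a Gauduchon metric in its conformal class.

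First, by Gauduchon's theorem \cite{gauduchon1977theoreme} there exists a smooth real function $f$ on $X$ such that $\tilde\omega \coloneqq e^{f}\omega$ is Gauduchon, i.e., $\partial\bar\partial(\tilde\omega^{n-1}) = 0$. Since $\dim_{\mathbb{C}} X = n = 2$, this reads $\partial\bar\partial\tilde\omega = 0$, so $\tilde\omega$ is pluriclosed. Moreover $\tilde\omega^{2}$ is a form of top bidegree $(2,2)$ on the complex surface $X$, hence $\partial\bar\partial(\tilde\omega^{2})$ is a $(3,3)$-form and vanishes for degree reasons. Therefore $\tilde\omega$ satisfies $\partial\bar\partial\tilde\omega = 0 = \partial\bar\partial(\tilde\omega^{2})$, the standing hypothesis of Theorem 1.2.

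Next, I would verify that the remaining two hypotheses of Theorem 1.2 transfer from $\omega$ to $\tilde\omega$. The eigenvalues of $\Lambda$ relative to $\tilde\omega$ are $\tilde\lambda_{i} = e^{-f}\lambda_{i}$, so positivity of any positive linear combination of eigenvalues is preserved. In dimension two, since $\arctan$ is odd and strictly increasing, the condition $\Theta_{\omega}(\ubar{\chi}) > 0$ is equivalent to $\lambda_{1} + \lambda_{2} > 0$; the latter is invariant under the positive rescaling $\lambda_{i} \mapsto e^{-f}\lambda_{i}$, so $\Theta_{\tilde\omega}(\ubar{\chi}) > 0 = (n-2)\pi/2$. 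For the $C$-subsolution property, recall that in the supercritical-phase dHYM setting of Collins--Jacob--Yau (see also Sz\'ekelyhidi) it is a pointwise algebraic condition on the eigenvalues, which in dimension two reduces to scalar inequalities of the form $\arctan(\lambda_{j}) > \hat{\Theta} - \pi/2$ for $j = 1, 2$; these inequalities are preserved when the eigenvalues and the topological target are replaced simultaneously by their Gauduchon counterparts $\tilde\lambda_{j}$ and $\hat\Theta_{\tilde\omega}(\chi_{0})$.

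Finally, applying Theorem 1.2 to $(X, \tilde\omega, [\chi_{0}], \ubar{\chi})$ produces the unique $\chi \in [\chi_{0}]$ solving $\Theta_{\tilde\omega}(\chi) = \hat\Theta_{\tilde\omega}(\ubar{\chi}) = \hat\Theta_{\tilde\omega}(\chi_{0})$, which is exactly (1.7). The main technical step, I expect, is the transfer of the $C$-subsolution property, since the topological target $\hat{\Theta}$ changes less transparently than the eigenvalues under the conformal rescaling. In dimension two this should be manageable, as the $C$-subsolution condition reduces to single-variable arctangent inequalities that are stable under positive rescaling together with the matching change of target phase.
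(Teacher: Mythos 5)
Your proposal follows the same route as the paper's own proof: invoke Gauduchon's theorem to obtain a pluriclosed $\tilde\omega = e^{f}\omega$ in the conformal class, observe that $\partial\bar\partial(\tilde\omega^{2}) = 0$ is automatic on a surface for degree reasons, verify the supercritical phase condition $\Theta_{\tilde\omega}(\ubar{\chi}) > 0$, and apply \hyperlink{T:4.1}{Theorem 4.1}. Your phase-positivity step is in fact cleaner than the paper's. You note directly that in dimension two $\Theta_{\tilde\omega}(\ubar{\chi}) > 0$ is equivalent to $\tilde\lambda_{1}+\tilde\lambda_{2} > 0$, which is manifestly preserved under $\lambda_{i}\mapsto e^{-f}\lambda_{i}$; the paper instead invokes a ``first and second variation'' argument to bound the quantity below by $\min_{c\in\{1,m,M\}}\{\arctan(c\lambda_{1})+\arctan(c\lambda_{2})\}$, which is a roundabout way of getting a fact that your one-line equivalence establishes at every $c>0$. (Incidentally, your rescaling $e^{-f}\lambda_{i}$ is the correct formula; the paper's formula $\arctan(e^{g}\lambda_{i})$ should read $\arctan(e^{-g}\lambda_{i})$, though the sign does not affect the conclusion.)

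On the $C$-subsolution transfer: you are right to flag this as the delicate point, and your instinct is sharper than the exposition in the paper, which does not address the transfer at all and simply cites \hyperlink{T:4.1}{Theorem 4.1} after checking the pluriclosed and phase conditions. However, your closing assertion that the arctangent inequalities of \hyperlink{L:2.4}{Lemma 2.4} ``are preserved when the eigenvalues and the topological target are replaced simultaneously'' is not a proof and is not obviously true: both sides of $\arctan(\mu_{j}) > \hat{\Theta} - \pi/2$ change under the conformal rescaling, and there is no a priori monotonicity linking the change in $\arctan(e^{-f}\mu_{j})$ to the change in $\hat{\Theta}_{\tilde\omega}(\chi_{0})$. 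The most consistent reading of the corollary --- and the one that makes both the paper's proof and yours complete --- is that the $C$-subsolution hypothesis is to be understood relative to the Gauduchon representative $\tilde\omega$ (canonically determined by the conformal class up to a positive constant) from the outset, so that no transfer is required. You should replace the hand-waving sentence with an explicit remark to that effect, or else supply the genuine verification that rescaling preserves the subsolution inequalities.
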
\bigskip

Moreover, we solve the dHYM equation when the non-Kähler compact complex surface is either an {\em Inoue surface} or a {\em secondary Kodaira} surface.

\hypertarget{C:1.2}{\begin{fcor}}
Let $X$ be either a Inoue surface or a secondary Kodaira surface with a Hermitian metric $\omega$. Then for any $[\chi_0] \in H^{1,1}_{\text{BC}}(X; \mathbb{R})$, there exists a pluriclosed Hermitian metric $\tilde{\omega}$ in the same conformal class of $\omega$ and a unique smooth $(1,1)$-form $\chi \in [\chi_0]$ solving the deformed Hermitian--Yang--Mills equation
\begin{align*}
\Theta_{\tilde{\omega}} \left ( \chi \right ) = \hat{\Theta}_{\tilde{\omega}} \left ( \chi \right ).
\end{align*} 
\end{fcor}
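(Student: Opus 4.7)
The plan is to reduce Corollary 1.2 to Corollary 1.1 by constructing, for every Bott--Chern class $[\chi_0] \in H^{1,1}_{\text{BC}}(X;\mathbb{R})$, a $C$-subsolution $\ubar{\chi} \in [\chi_0]$ with $\Theta_{\omega}(\ubar{\chi}) > 0$. Once this is in hand, Corollary 1.1 immediately supplies the pluriclosed metric $\tilde{\omega}$ in the conformal class of $\omega$ together with a unique $\chi \in [\chi_0]$ solving $\Theta_{\tilde{\omega}}(\chi) = \hat{\Theta}_{\tilde{\omega}}(\chi_0)$, which is exactly the conclusion of Corollary 1.2. Note that the secondary hypothesis $\partial\bar\partial(\tilde{\omega}^2) = 0$ appearing in Theorem 1.2 is automatic, since $\tilde{\omega}^2$ is a top-degree $(2,2)$-form on the complex surface $X$.

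Both families of surfaces have the useful feature of being compact quotients of a simply connected solvable (respectively nilpotent) Lie group by a discrete cocompact subgroup. Inoue surfaces of types $S^0$, $S^+$, $S^-$ are quotients of $\mathbb{H} \times \mathbb{C}$ carrying Tricerri's explicit left-invariant Hermitian structures, while secondary Kodaira surfaces are principal elliptic bundles realized as nilmanifolds with standard left-invariant Hermitian forms. In each case the Bott--Chern cohomology $H^{1,1}_{\text{BC}}(X;\mathbb{R})$ has been computed explicitly (by Angella--Tomassini and collaborators) and has small real dimension, with generators representable by left-invariant $(1,1)$-forms on the universal cover. The plan is: given $[\chi_0]$, start from such an invariant representative $\ubar{\chi}_{\mathrm{inv}}$, and modify it within its class by a smooth potential $\ubar u$ so that, with respect to a left-invariant pluriclosed metric $\tilde{\omega}$ in the conformal class of $\omega$ (furnished by Gauduchon's theorem), the eigenvalues $\ubar\lambda_1, \ubar\lambda_2$ of $\ubar{\chi}$ with respect to $\tilde{\omega}$ satisfy $\arctan \ubar\lambda_1 + \arctan \ubar\lambda_2 > 0$ and, for $j = 1, 2$, $\tfrac{\pi}{2} + \arctan \ubar\lambda_j > \hat{\Theta}_{\tilde{\omega}}(\chi_0)$ pointwise on $X$. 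The second inequality is the two-dimensional form, in the supercritical regime, of Székelyhidi's $C$-subsolution condition for the prescribed constant phase.

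The main obstacle is verifying these pointwise conditions for every $[\chi_0]$, especially in classes whose left-invariant representative is not globally positive as a $(1,1)$-form. One has to leverage the explicit generators of $H^{1,1}_{\text{BC}}$ and, when necessary, correct by $\sqrt{-1}\partial\bar\partial$-exact terms to push the eigenvalues into the supercritical cone. Because $\dim_{\mathbb{R}} H^{1,1}_{\text{BC}}$ is small for both families of surfaces, this reduces to a finite and fully explicit verification on each of the Inoue types $S^0, S^+, S^-$ and on the secondary Kodaira case. Once the $C$-subsolution with positive angle is produced in every class, Corollary 1.1 delivers existence and uniqueness of $\chi$, and the proof is complete.
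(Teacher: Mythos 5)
Your overall plan---reduce to Corollary 1.1 by producing a $C$-subsolution with positive angle in each Bott--Chern class, using the invariant structure of these solvmanifolds---is the right direction, and your observation that $\partial\bar\partial(\tilde\omega^2)=0$ is automatic in complex dimension two is correct. But the proposal stops short of the fact that actually closes the argument, and the step you leave vague is precisely where a naive attempt would break down.

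The decisive point is not merely that $H^{1,1}_{\text{BC}}(X;\mathbb{R})$ is "small"; it is that for Inoue surfaces and secondary Kodaira surfaces $\dim_{\mathbb{C}} H^{1,1}_{\text{BC}}(X)=1$, with a generator that is a \emph{semi-definite} left-invariant $(1,1)$-form ($\sqrt{-1}\,\varphi^2\wedge\bar\varphi^2$ for the Inoue types, $\sqrt{-1}\,\varphi^1\wedge\bar\varphi^1$ for secondary Kodaira). Therefore every class is $[\chi_0]=c\,[\sqrt{-1}\,\varphi^j\wedge\bar\varphi^j]$ for a single real $c$. After replacing $\omega$ by the Gauduchon representative $\tilde\omega=e^g\omega$ in its conformal class, one takes the invariant form $c\,\sqrt{-1}\,\varphi^j\wedge\bar\varphi^j$ itself as the candidate $C$-subsolution: in complex dimension $2$, $\Theta_{\tilde\omega}>0$ is equivalent to $\mathrm{Tr}_{\tilde\omega}>0$, and a direct computation gives $\mathrm{Tr}_{\tilde\omega}(c\,\sqrt{-1}\,\varphi^j\wedge\bar\varphi^j)=c\cdot(\text{positive quantity determined by }\tilde\omega)$. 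Thus for $c>0$ the invariant representative already has positive angle and is a $C$-subsolution (the $c<0$ case follows by the sign symmetry $\Theta_{\tilde\omega}(-\chi)=-\Theta_{\tilde\omega}(\chi)$, and $c=0$ is trivial). No correction by $\sqrt{-1}\partial\bar\partial$-exact terms is needed or helpful.

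By contrast, your proposal hinges on "modify it within its class by a smooth potential $\ubar u$ so that the eigenvalues satisfy the supercritical inequality." That is not something you can assert: finding a potential that pushes eigenvalues into a prescribed cone is essentially what the whole $C$-subsolution existence problem is, and there is no mechanism offered for why such a $\ubar u$ should exist. The argument only works because, after invoking $\dim_{\mathbb{C}} H^{1,1}_{\text{BC}}=1$ and the semi-definiteness of the generator, the invariant representative needs no correction at all. That is the missing idea you should add to your proof.
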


The paper is organized as follows. In \hyperlink{S:2}{section 2}, we list some basic lemmas and state the definition of $C$-subsolution. In \hyperlink{S:3}{section 3}, we prove the a priori estimate \hyperlink{T:1.1}{Theorem 1.1}. More precisely, we mainly focus on proving the $C^2$ estimate; in addition, we also get a direct $C^1$ estimate. In \hyperlink{S:4}{section 4}, we prove \hyperlink{T:1.2}{the existence theorem} for a special type  Hermitian metric, the existence theorem on a compact complex surface, and in particular when the non-Kähler surface is either a {\em Inoue surface} or a {\em secondary Kodaira} surface.\bigskip

{\em Note:} As we are about to submit our preprint, we notice that there is an interesting paper by Huang--Zhang--Zhang \cite{huang2020almosthermitian} appeared recently on arXiv:2011.14091v1 working on a similar problem. After a brief review, we notice the following differences. Huang--Zhang--Zhang work on a more general manifold, which is almost Hermitian manifolds, but their phase condition is more restrictive---they assume the hypercritical phase condition, which is $h(x) > \frac{(n-1)\pi}{2}$, to make the equation concave. But here we only need to assume the supercritical phase condition, which is $h(x) > \frac{(n-2)\pi}{2}$. Also, their existence theorem needs another assumption on the existence of a supersolution. Our existence results (\hyperlink{T:1.2}{Theorem 1.2}) is obtained on a large family of Hermitian manifolds, which includes the {\em Inoue Surfaces} and the {\em Secondary Kodaira Surfaces} as particular interesting examples.\bigskip

\textbf{Acknowledgements.} I thank my advisors Prof.~Zhiqin Lu and Prof.~Xiangwen Zhang for hosting many seminars and giving me enlightening helps. It would have been impossible for me to finish this paper without them. I also like to thank Prof.~Tristan Collins for his comments and suggestions on the first version of this paper. 

\hypertarget{S:2}{\section{Preliminaries}}
\subsection{Basic Formulas of Eigenvalues and Symmetric Functions}

Let us state some lemmas for symmetric functions. One can also check the following references \cite{szekelyhidi2018fully, szekelyhidi2017gauduchon} for more details.

\hypertarget{L:2.1}{\begin{flemma}}
Let $\Lambda$ be a diagonal matrix with distinct eigenvalues $\lambda_i$, then the partial derivatives of the eigenvalues $\lambda_i$ with respect to the entries at the diagonal matrix $\Lambda$ are 
\begin{align*}
\label{eq:2.1}
\lambda_i^{pq} &= \frac{\partial \lambda_i}{\partial \Lambda_{p \bar{q}}} = \delta_{pi} \delta_{qi}, \tag{2.1} \\
\label{eq:2.2}
\lambda_{i}^{pq,rs} &= \frac{\partial^2 \lambda_i}{\partial \Lambda_{p \bar{q}} \partial \Lambda_{r \bar{s}}} = (1- \delta_{ip}) \frac{\delta_{iq} \delta_{ir} \delta_{ps}}{\lambda_i - \lambda_{p}} + (1- \delta_{ir}) \frac{\delta_{is} \delta_{ip} \delta_{rq}}{\lambda_i - \lambda_r}. \tag{2.2}
\end{align*}
\end{flemma}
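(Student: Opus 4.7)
The plan is to derive both identities from standard first- and second-order perturbation theory for eigenvalues of a matrix with simple spectrum. Since $\Lambda$ is diagonal with pairwise distinct eigenvalues, each $\lambda_i$ extends to a smooth function of the matrix entries in a neighborhood of $\Lambda$, and both the right eigenvector $v_i$ and the left eigenvector reduce to the standard basis vector $e_i$ at $\Lambda$ itself.

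For (2.1), I would fix an arbitrary direction $B = (B_{p\bar q})$, set $\Lambda(t) := \Lambda + tB$, and let $v_i(t)$ denote the smooth right eigenvector for $\lambda_i(t)$ with $v_i(0) = e_i$. Differentiating $\Lambda(t) v_i(t) = \lambda_i(t) v_i(t)$ at $t = 0$ and contracting on the left by $e_i^T$ annihilates the $\Lambda \dot v_i(0)$ term (because $e_i^T \Lambda = \lambda_i e_i^T$) and yields $\dot\lambda_i(0) = e_i^T B e_i = B_{i\bar i}$. Taking $B$ to be the elementary matrix supported at the single slot $(p,\bar q)$ gives (2.1). As a by-product, contracting the same first-order equation with $e_k^T$ for $k \neq i$ gives $(\lambda_i - \lambda_k)(e_k^T \dot v_i(0)) = B_{k\bar i}$, and after normalizing $e_i^T v_i(t) \equiv 1$ one obtains the first-order eigenvector correction $\dot v_i(0) = \sum_{k \neq i} \frac{B_{k\bar i}}{\lambda_i - \lambda_k}\, e_k$.

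For (2.2), I would differentiate the eigenvector equation a second time, contract again with $e_i^T$, and insert the formula for $\dot v_i(0)$. The normalization makes $e_i^T \dot v_i(0) = 0$, so the cross term $2\dot\lambda_i(0)(e_i^T \dot v_i(0))$ drops out, leaving
$$\ddot\lambda_i(0) \;=\; 2\, e_i^T B\, \dot v_i(0) \;=\; 2 \sum_{k \neq i} \frac{B_{i\bar k}\, B_{k\bar i}}{\lambda_i - \lambda_k}.$$
Polarizing this quadratic form in $B$---that is, reading off the coefficient of $B_{p\bar q} B_{r\bar s}$ in $\ddot\lambda_i(0) = \sum_{p,q,r,s} \lambda_i^{pq,rs}\, B_{p\bar q} B_{r\bar s}$---produces exactly the two summands in (2.2). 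The first summand captures the pairings $(p,q) = (k,i)$ and $(r,s) = (i,k)$ with $k \neq i$ (forcing $p \neq i$, $q = r = i$, $s = p$, which is what the factor $(1-\delta_{ip})\,\delta_{iq}\delta_{ir}\delta_{ps}$ encodes), and the second summand captures the swapped pairing $(p,q) = (i,k)$, $(r,s) = (k,i)$.

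The underlying computations are entirely routine; the only real care is the index bookkeeping, which is precisely what the triple-Kronecker-delta factors in (2.2) encode. The minor conceptual point worth flagging is that $\Lambda = \omega^{-1}\chi$ need not be Hermitian in general, so the left and right eigenvectors of $\Lambda(t)$ need not coincide away from the base point; however, at the diagonal matrix $\Lambda$ both reduce to standard basis vectors, and the usual non-Hermitian perturbation formulas apply verbatim. I therefore do not expect a serious obstacle in this lemma; the payoff is that (2.1)--(2.2) then feed into the later $C^2$-estimate calculation via the symmetric-function formalism set up in the companion references cited in the paper.
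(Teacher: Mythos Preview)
Your perturbation-theory argument is correct and standard: the first-order Hellmann--Feynman step gives (2.1), the eigenvector correction $\dot v_i(0)=\sum_{k\neq i}\frac{B_{k\bar i}}{\lambda_i-\lambda_k}e_k$ is right, and substituting it into the second-order identity $\ddot\lambda_i(0)=2e_i^T B\dot v_i(0)$ followed by polarization recovers exactly the two Kronecker-delta patterns in (2.2). The paper itself does not prove this lemma---it only states it and refers to \cite{szekelyhidi2018fully, szekelyhidi2017gauduchon} for details---so there is no in-paper argument to compare against; your write-up would serve as a complete self-contained proof.
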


\hypertarget{L:2.2}{\begin{flemma}}
If $F(\Lambda) = f(\lambda_1, \dots, \lambda_n)$ is a function in terms of the eigenvalues $\lambda$ of a diagonalizable matrix $\Lambda$, then at a diagonal matrix $\Lambda$ with distinct eigenvalues $\lambda_i$, we get
\begin{align*}
\label{eq:2.3}
F^{ij} &= \frac{\partial F}{\partial \Lambda_{i \bar{j}}} = \delta_{ij} f_i, \tag{2.3}\\ 
\label{eq:2.4}
 F^{ij,rs} &= \frac{\partial^2 F}{\partial \Lambda_{i \bar{j}} \Lambda_{r \bar{s}}} = f_{ir} \delta_{ij} \delta_{rs} + \frac{f_i - f_j}{ \lambda_i - \lambda_j} (1 - \delta_{ij})\delta_{is} \delta_{jr},\tag{2.4}
\end{align*}where $f_i = \frac{\partial f}{\partial \lambda_i}$.
\end{flemma}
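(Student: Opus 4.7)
The plan is to treat $F(\Lambda) = f(\lambda_1(\Lambda),\ldots,\lambda_n(\Lambda))$ as a composition and apply the chain rule, substituting the explicit formulas for $\lambda_i^{pq}$ and $\lambda_i^{pq,rs}$ from \hyperlink{L:2.1}{Lemma 2.1}. Because those formulas are valid only at a diagonal matrix with distinct eigenvalues, both identities (2.3) and (2.4) should be understood as pointwise statements at such a $\Lambda$.

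For (2.3), one application of the chain rule gives $F^{ij} = \sum_k f_k\,\lambda_k^{ij}$. Substituting $\lambda_k^{ij} = \delta_{ki}\delta_{kj}$ from (2.1) collapses the sum to the single term $f_i \delta_{ij}$, which is precisely (2.3).

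For (2.4), I would differentiate the formula $F^{ij} = \sum_k f_k\,\lambda_k^{ij}$ once more to obtain
\[
F^{ij,rs} = \sum_k \bigl( f_k^{rs}\, \lambda_k^{ij} + f_k\, \lambda_k^{ij,rs} \bigr).
\]
The first piece is handled by reapplying the chain rule: $f_k^{rs} = \sum_m f_{km}\,\lambda_m^{rs} = f_{kr}\,\delta_{rs}$, and then $\lambda_k^{ij} = \delta_{ki}\delta_{kj}$ reduces this contribution to $f_{ir}\,\delta_{ij}\delta_{rs}$, reproducing the diagonal part of (2.4). For the second piece, substituting (2.2) yields two terms. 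In the first term the Kronecker factors force $k = j = r$, $i = s$, and $k \neq i$, contributing $\tfrac{f_j}{\lambda_j - \lambda_i}(1-\delta_{ij})\delta_{jr}\delta_{is}$; in the second term they force $k = i = s$, $j = r$, and $k \neq r$, contributing $\tfrac{f_i}{\lambda_i - \lambda_j}(1-\delta_{ij})\delta_{jr}\delta_{is}$. Combining these two using
\[
\frac{f_j}{\lambda_j - \lambda_i} + \frac{f_i}{\lambda_i - \lambda_j} = \frac{f_i - f_j}{\lambda_i - \lambda_j}
\]
produces the off-diagonal term in (2.4).

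The computation is entirely algebraic and the only real task is keeping track of which indices must coincide among the many Kronecker deltas, so there is no serious analytic obstacle. In particular, since we are working at a point where the eigenvalues are distinct, the quotient $(f_i - f_j)/(\lambda_i - \lambda_j)$ needs no limiting interpretation and \hyperlink{L:2.1}{Lemma 2.1} applies directly.
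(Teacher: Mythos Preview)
Your argument is correct. The paper itself does not supply a proof of this lemma; it merely states the result and refers the reader to \cite{szekelyhidi2018fully, szekelyhidi2017gauduchon} for details. Your derivation via the chain rule together with the eigenvalue derivative formulas of Lemma~2.1 is exactly the standard route taken in those references, and the index bookkeeping you outline is accurate.
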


\hyperlink{L:2.2}{Lemma 2.2} here will be used very often in the next section. Roughly speaking, by perturbing the eigenvalues, we can assume that the eigenvalues are distinct, then we can apply \hyperlink{L:2.2}{Lemma 2.2} to simplify our proof of a priori estimate.

\subsection{\texorpdfstring{$C$}{C}-Subsolution}
Here, we review the notion of a $C$-subsolution, which is introduced in Székelyhidi \cite{szekelyhidi2018fully} (see also Guan \cite{guan2014second}).

\hypertarget{D:2.1}{\begin{fdefi}[\cite{szekelyhidi2018fully}]} Fix $\chi_0 \in [\chi]$. We say that a smooth function $\ubar{u} \colon X \rightarrow \mathbb{R}$ is a $C$-subsolution of equation (\ref{eq:1.3}) if the following holds: At each point $x \in X$ define the matrix $\Lambda^i_j \coloneqq \omega^{i \bar{k}} \left (  \chi_0 + \sqrt{-1} \partial \bar{\partial} \ubar{u}   \right )_{j \bar{k}}$. Then we require that the set
\begin{align*}
\label{eq:2.5}
\left \{  \lambda' \in \Gamma : \sum_{l = 1}^{n} \arctan(\lambda'_l) = h(x), \text{ and } \lambda' - \lambda(\Lambda(x)) \in \Gamma_n  \right \} \tag{2.5}
\end{align*}is bounded, where $\lambda(\Lambda(x)) = \left ( \lambda_1, \dots, \lambda_n \right )$ denotes the $n$-tuple of eigenvalues of $\Lambda(x)$ with $\lambda_1 \geq \lambda_2 \geq \dots \geq \lambda_n$.
\end{fdefi}

\begin{figure}[!h] 
\centering 
\begin{minipage}[t]{0.4\textwidth} 
\centering 
\begin{tikzpicture}
\begin{axis}[axis on top,axis lines=middle,xlabel={$\lambda'_1$},ylabel={$\lambda'_2$},
    width=0.9\textwidth,
    trig format plots=rad,
    samples=101,
    unbounded coords=jump,
    xmin=-pi/2,xmax=pi,
    ymin=-pi/2,ymax=pi,
    xtick={-1},xticklabels={},
    ytick={-1},yticklabels={},
    grid=major,grid style={densely dashed},
    legend style={at={(0.5,0.99)},anchor=north west}
    ]
\fill [green] (1/8,1/8) rectangle (pi/2+1,pi/2+1);
\fill [red!50] (1/8,1/8) circle (1pt) node[black,below right] {$(\lambda_1, \lambda_2)$};
\addplot[blue,fill=white,thick,variable=\t,domain=-pi/2+1:pi/2-0.1] ({tan(\t)},{tan(pi/4 -\t)});
\addlegendentry{$\sum_i \arctan(\lambda'_i)  = \sigma$}
\addplot[black,line width=0.5mm,variable=\t,domain={atan(1/8)}:{pi/4-atan(1/8)}] ({tan(\t)},{tan(pi/4 -\t)});
\end{axis}
\end{tikzpicture}
\caption{A $C$-Subsolution.}
\end{minipage} 
\hspace{0.1\textwidth} 
\begin{minipage}[t]{0.4\textwidth} 
\centering 
\begin{tikzpicture}
\begin{axis}[axis on top,axis lines=middle,xlabel={$\lambda'_1$},ylabel={$\lambda'_2$},
    width=0.9\textwidth,
    trig format plots=rad,
    samples=101,
    unbounded coords=jump,
    xmin=-pi/2,xmax=pi,
    ymin=-pi/2,ymax=pi,
    xtick={-1},xticklabels={},
    ytick={-1},yticklabels={},
    grid=major,grid style={densely dashed},
    legend style={at={(0.5,0.99)},anchor=north west}
    ]
\fill [red!50] (-1,-1) rectangle (pi/2+1.5,pi/2+1.5);
\addplot[blue,thick,variable=\t,domain=-pi/2+1:pi/2-0.1] ({tan(\t)},{tan(pi/4 -\t)});
\addlegendentry{$\sum_i \arctan(\lambda'_i) = \sigma$}
\end{axis}
\end{tikzpicture}
\caption{$C$-Subsolutions.}
\end{minipage} 
\end{figure} 

Here, note that when a $C$-subsolution $\ubar{u}$ exists, for convenience, we say $\ubar{\chi} \coloneqq \chi_0 + \sqrt{-1} \partial \bar{\partial}  \ubar{u}$ as a $C$-subsolution. The above figures are examples of a $C$-subsolution at a point and $C$-subsolutions at a point when $n = 2$.\bigskip

We have the following Lemma due to Wang--Yuan \cite{wang2014hessian} and Yuan \cite{yuan2006global}.
\hypertarget{L:2.3}{\begin{flemma}[\cite{collins20151}, \cite{wang2014hessian} and \cite{yuan2006global}]}
Suppose we have real numbers $\lambda_1 \geq \lambda_2 \geq \dots \geq \lambda_n$ satisfying $\Theta(\lambda) = \sigma$ for $\sigma \geq (n-2) \frac{\pi}{2} + \epsilon_0$, where $\Theta(\lambda) \coloneqq \sum_{i = 1}^n \arctan(\lambda_i)$ and $\epsilon_0 \geq 0$. Then $(\lambda_1, \dots , \lambda_n)$ have the following arithmetic properties:
\begin{itemize}
\item[(i)] $\lambda_{n-1} + \lambda_n \geq \tan \left ( \frac{\epsilon_0}{2} \right )$.
\item[(ii)] $\sigma_k (\lambda_1, \dots, \lambda_n) \geq 0$ for all $1 \leq k \leq n-1$.
\end{itemize}Furthermore,
\begin{itemize}
\item[(iii)] If $\Gamma^{\sigma}$ is not empty, the boundary $\partial \Gamma^{\sigma}$ is smooth, convex hypersurface.
\end{itemize}In addition, if $\sigma \geq (n-2)\frac{\pi}{2} + \epsilon_0$, then
\begin{itemize}
\item[(iv)] If $\lambda_n \leq 0$, then $|\lambda_n| \leq C(\epsilon_0)$.
\end{itemize}
\end{flemma}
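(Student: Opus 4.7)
I would treat the four items in the order (iv), (i), (iii), (ii), since (iii) is the main input for (ii).

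Items (iv) and (i) both stem from a single tail estimate. Since each $\arctan \lambda_j$ is strictly less than $\pi/2$, the $n-2$ largest arctangents contribute strictly less than $(n-2)\pi/2$ to the sum, and
\[
\arctan \lambda_{n-1} + \arctan \lambda_n \;\geq\; \sigma - (n-2)\tfrac{\pi}{2} \;\geq\; \epsilon_0.
\]
For (iv), combining this with $\arctan \lambda_{n-1} < \pi/2$ forces $\arctan \lambda_n > \epsilon_0 - \pi/2$, whence $|\lambda_n| < \cot \epsilon_0 \eqqcolon C(\epsilon_0)$ whenever $\lambda_n \leq 0$. For (i), I would set $\theta_j \coloneqq \arctan \lambda_j$, $\gamma \coloneqq \theta_{n-1}+\theta_n \geq \epsilon_0$, and $\delta \coloneqq \theta_{n-1}-\theta_n \in [0,\pi-\gamma)$ (the upper endpoint coming from $\theta_{n-1}<\pi/2$), and use
\[
\lambda_{n-1}+\lambda_n \;=\; \frac{2\sin \gamma}{\cos \gamma+\cos \delta}.
\]
The denominator is positive on the admissible range, and since $\sin\gamma \geq 0$ the whole expression is minimized at $\delta = 0$, giving $\lambda_{n-1}+\lambda_n \geq 2\tan(\gamma/2) \geq \tan(\epsilon_0/2)$.

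For (iii), I would reproduce Yuan's convexity computation. Parametrize $\partial \Gamma^{\sigma}$ locally as the graph $\lambda_n = \lambda_n(\lambda_1,\dots,\lambda_{n-1})$ via the implicit function theorem (using $\partial_{\lambda_n}\Theta = 1/(1+\lambda_n^2) > 0$), which yields smoothness for free, and differentiate the relation $\Theta = \sigma$ twice to read off the second fundamental form. After algebraic manipulation, its positive semidefiniteness becomes a symmetric-function inequality in the $\lambda_j$'s, known to hold under the hypothesis $\sigma \geq (n-2)\pi/2$.

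Finally, for (ii), I would split on the sign of $\lambda_n$. If $\lambda_n \geq 0$ then every $\lambda_i$ is nonnegative and $\sigma_k(\lambda) \geq 0$ is immediate. Otherwise $\lambda_n < 0$, and (i) with $\epsilon_0 = 0$ forces $\lambda_i \geq \lambda_{n-1} \geq -\lambda_n > 0$ for $i \leq n-1$. Splitting $\sigma_k$ according to whether the index $n$ appears,
\[
\sigma_k(\lambda) \;=\; \sigma_k(\lambda') + \lambda_n\,\sigma_{k-1}(\lambda'), \qquad \lambda' \coloneqq (\lambda_1,\dots,\lambda_{n-1}),
\]
with $\lambda'$ strictly positive, so the claim reduces to $\sigma_k(\lambda')/\sigma_{k-1}(\lambda') \geq -\lambda_n$. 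The main obstacle is that the coarse bound $-\lambda_n \leq \lambda_{n-1}$ from (i) alone does not suffice: Maclaurin's inequality saturates to $\sigma_k/\sigma_{k-1} = \tfrac{n-k}{k}\lambda_{n-1}$ in the all-equal limit, which can be strictly smaller than $\lambda_{n-1}$ once $k > n/2$. To close the gap I would invoke the convexity from (iii): writing the supporting-hyperplane inequality $\nabla \Theta(\lambda) \cdot (\mu-\lambda) \leq 0$ for boundary points $\mu \in \partial \Gamma^{\sigma}$ obtained by trading the smallest entry of $\lambda$ against its larger neighbors converts the sign condition on $\sigma_k$ into a direct consequence of the tail estimate that drove (i) and (iv). This quantitative extraction from convexity is the most delicate step, and I would follow the analogous K\"ahler template of Collins--Jacob--Yau \cite{collins20151}.
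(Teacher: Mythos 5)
The paper does not actually prove this lemma; it is cited directly from Collins--Jacob--Yau, Wang--Yuan, and Yuan, so there is no ``paper proof'' to compare against, and I assess your sketch on its own terms.

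Your arguments for (iv) and (i) are correct and self-contained. For (i) your identity $\lambda_{n-1}+\lambda_n = \frac{2\sin\gamma}{\cos\gamma+\cos\delta}$ with the minimizer at $\delta=0$ in fact gives the stronger bound $\lambda_{n-1}+\lambda_n \geq 2\tan(\gamma/2)\geq 2\tan(\epsilon_0/2)$, which implies the stated inequality. Delegating (iii) to Yuan's second-fundamental-form computation is appropriate.

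The weak link is (ii). You correctly note that after reducing to $\sigma_k(\lambda')\geq -\lambda_n\,\sigma_{k-1}(\lambda')$ the crude input $-\lambda_n\leq\lambda_{n-1}$ only yields $\sigma_k(\lambda')/\sigma_{k-1}(\lambda')\geq\tfrac{n-k}{k}(-\lambda_n)$, which closes the gap only for $k\leq n/2$; that is a real observation. But the fix you propose --- invoking convexity of $\partial\Gamma^\sigma$ and a supporting-hyperplane inequality ``obtained by trading the smallest entry against its larger neighbors'' --- is not carried out, and as written it is not clear it works. Two concrete issues. First, for the convex super-level set $\Gamma^\sigma=\{\Theta\geq\sigma\}$, the supporting hyperplane condition at a boundary point $\lambda$ is $\nabla\Theta(\lambda)\cdot(\mu-\lambda)\geq 0$ for $\mu\in\Gamma^\sigma$, not $\leq 0$ as you wrote; the gradient points inward. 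Second, and more seriously, you never identify a competitor $\mu\in\Gamma^\sigma$ that actually converts this inequality into a bound on $\sigma_k/\sigma_{k-1}$, and the ``all-equal'' configuration you worry about (all $\lambda'_i$ near $-\lambda_n$) is in fact already excluded by the phase constraint, since it would force $(n-2)\arctan(a)\geq(n-2)\tfrac{\pi}{2}$, impossible. So the constraint has more content than your sketch extracts, but extracting it is not done. The standard route in Wang--Yuan is different: one rewrites the hypothesis via $\arctan(1/x)=\tfrac{\pi}{2}-\arctan x$ for $x>0$ to obtain, for the relevant index subsets $J\subset\{1,\dots,n-1\}$, inequalities of the form $\sum_{j\in J}\arctan(1/\lambda_j)<\arctan(1/\mu)$ with $\mu=-\lambda_n$, and then uses subadditivity of $\arctan$ on $[0,\infty)$ to deduce $\sum_{j\in J} 1/\lambda_j< 1/\mu$, from which the required symmetric-function inequalities follow by summation. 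Your sketch for (ii) would need to be replaced by an argument of that type (or an explicit induction), since the convexity/supporting-hyperplane route as described has not been made to work.
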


Furthermore, we have the following explicit description of the $C$-subsolution due to Collins--Jacob--Yau \cite{collins20151}.

\hypertarget{L:2.4}{\begin{flemma}[\cite{collins20151}]}
A smooth function $\ubar{u} \colon X \rightarrow \mathbb{R}$ is a $C$-subsolution of equation (\ref{eq:1.3}) iff at each point $x \in X$,  for all $j = 1, \dots, n$ we have
\begin{align*}
\sum_{l \neq j} \arctan \left ( \mu_l \right ) > h(x) - \frac{\pi}{2},
\end{align*}where $\left \{  \mu_1, \mu_2, \dots, \mu_n \right \}$ are the eigenvalues of the Hermitian endomorphism $\omega^{i \bar{k}} \left (  \chi_0 + \sqrt{-1} \partial \bar{\partial} \ubar{u}   \right )_{j \bar{k}}$.
\end{flemma}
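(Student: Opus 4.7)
The plan is to analyze the boundedness of the set $S$ defined in (2.5) via the asymptotic behavior of the level set $\{\sum_l\arctan(\lambda'_l)=h(x)\}$; the essential feature is that $\arctan(\lambda'_l)<\pi/2$, so an unbounded sequence in $S$ must have at least one coordinate diverging to $+\infty$, forcing its arctangent to saturate. Fix $x\in X$ and set $\mu=\lambda(\Lambda(x))$. Observe first that the constraint $\lambda'-\mu\in\Gamma_n$ already provides a uniform lower bound $\lambda'_l>\mu_l$ on every component, while the constraint $\sum_l\arctan(\lambda'_l)=h(x)\in[(n-2)\pi/2+\epsilon_0,n\pi/2)$ automatically places $\lambda'$ in the relevant (supercritical) cone $\Gamma$, so these two conditions need not be revisited during the argument.

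For the direction $(\Leftarrow)$, I would suppose $\sum_{l\neq j}\arctan(\mu_l)>h(x)-\pi/2$ for every $j$ and assume for contradiction that $S$ is unbounded. Choose $\lambda'^{(k)}\in S$ with $|\lambda'^{(k)}|\to\infty$. Since every component is bounded below by the corresponding $\mu_l$, after extracting a subsequence some index $j$ satisfies $\lambda'^{(k)}_{j}\to+\infty$, so $\arctan(\lambda'^{(k)}_{j})\to\pi/2$, and therefore $\sum_{l\neq j}\arctan(\lambda'^{(k)}_{l})\to h(x)-\pi/2$. On the other hand, monotonicity of $\arctan$ together with $\lambda'^{(k)}_l>\mu_l$ yields $\sum_{l\neq j}\arctan(\lambda'^{(k)}_{l})>\sum_{l\neq j}\arctan(\mu_{l})>h(x)-\pi/2$, contradicting the limit just computed.

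For the converse $(\Rightarrow)$ I would argue by contrapositive: assume $\sum_{l\neq j}\arctan(\mu_l)\le h(x)-\pi/2$ for some $j$ and exhibit an unbounded family in $S$. For any large $t>0$, seek $\lambda'\in S$ of the form $\lambda'_j=t$ and $\lambda'_l=\mu_l+\epsilon$ for $l\neq j$, with $\epsilon=\epsilon(t)>0$ chosen so that $\sum_l\arctan(\lambda'_l)=h(x)$; this reduces to solving $\sum_{l\neq j}\arctan(\mu_l+\epsilon)=h(x)-\arctan(t)$. Because $\arctan(t)<\pi/2$, the target strictly exceeds the value $\sum_{l\neq j}\arctan(\mu_l)$ attained at $\epsilon=0$, while for $t$ large enough the target is strictly below the supremum $(n-1)\pi/2$ (here I use $h(x)<n\pi/2$), so the intermediate value theorem produces $\epsilon(t)>0$. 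Letting $t\to\infty$ then gives the required unbounded family. I do not anticipate any genuine obstacle; the one delicate point is verifying that an unbounded sequence in $S$ must diverge in the positive direction, which is precisely what the shifted-orthant condition $\lambda'-\mu\in\Gamma_n$ guarantees, so the whole argument amounts to a recession-cone analysis of the convex hypersurface $\partial\Gamma^{h(x)}$ provided by \hyperlink{L:2.3}{Lemma 2.3}(iii) combined with the horizontal asymptote of $\arctan$.
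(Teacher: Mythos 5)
Your proof is correct, and the paper itself states this lemma without proof, attributing it to Collins--Jacob--Yau; your recession-cone argument (escape to infinity along the positive orthant direction, horizontal asymptote of $\arctan$ saturating the budget, intermediate value theorem for the converse) is the same mechanism used in that reference. The one place worth tightening is the dismissal of the condition $\lambda'\in\Gamma$: it is automatic here only because, for the supercritical dHYM equation, the relevant $\Gamma$ is precisely the superlevel set $\{\Theta>(n-2)\pi/2\}$ and the level-set constraint $\sum_l\arctan(\lambda'_l)=h(x)>(n-2)\pi/2$ enforces it, which is a fact worth stating rather than implying.
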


\begin{figure}[ht]
\centering
\begin{tikzpicture}
\begin{axis}[axis on top,axis lines=middle,xlabel={$\lambda'_1$},ylabel={$\lambda'_2$},
    width=0.4\textwidth,
    trig format plots=rad,
    samples=101,
    unbounded coords=jump,
    xmin=-pi/2,xmax=pi,
    ymin=-pi/2,ymax=pi,
    xtick={-1},xticklabels={},
    ytick={-1},yticklabels={},
    grid=major,grid style={densely dashed},
    legend style={at={(0.5,0.99)},anchor=north west}
    ]
\fill [red!25] (-pi/2,-1) rectangle (pi/2+1.5,pi/2+1.5);
\fill [red!25] (-1,-pi/2) rectangle (pi/2+1.5,pi/2+1.5);
\fill [red!50] (-1,-1) rectangle (pi/2+1.5,pi/2+1.5);
\addplot[blue,thick,variable=\t,domain=-pi/2+1:pi/2-0.1] ({tan(\t)},{tan(pi/4 -\t)});
\addlegendentry{$\sum_i \arctan(\lambda'_i)  = \sigma$}
\end{axis}
\end{tikzpicture}
\caption{Region of $C$-Subsolutions.}
\end{figure}
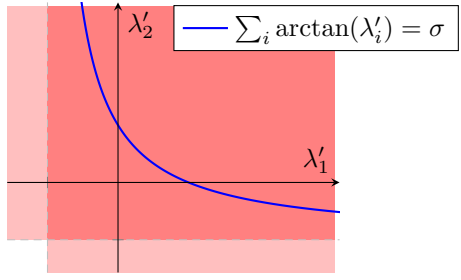
The above figure is an example to show the equivalence at a point when $n = 2$.\bigskip

Note that by the definition and the fact that the manifold is compact, we have the following two observations:
(i) Having $\ubar{\chi}$ as a $C$-subsolution of $h$ is an open condition.
(ii) Those who have $\ubar{\chi}$ as a $C$-subsolution are closed under maximum and minimum.

\hypertarget{L:2.5}{\begin{flemma}}
If $\ubar{\chi}$ is a $C$-subsolution of the equation $\Theta_{\omega}(\chi) = \sum_{i = 1}^{n} \arctan (\lambda_i) = h(x)$, where $h \colon X \rightarrow \left ( (n-2)\frac{\pi}{2}, n \frac{\pi}{2} \right )$ is a continuous function. 
\begin{itemize}
\item[(i)] There exists an $\epsilon > 0$ such that for any continuous function $k\colon X \rightarrow \left ( (n-2)\frac{\pi}{2}, n \frac{\pi}{2} \right )$ with $|h - k|_{C^0} < \epsilon$, $\ubar{\chi}$ is again a $C$-subsolution of $k(x)$.
\item[(ii)] If $\ubar{\chi}$ is a $C$-subsolution of $h_1$ and $h_2$, then $\ubar{\chi}$ is also a $C$-subsolution of $\max \left \{ h_1, h_2 \right \}$ and $\min \left \{ h_1, h_2 \right \}$.
\end{itemize}
\end{flemma}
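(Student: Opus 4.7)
The plan is to leverage Lemma 2.4, which characterizes $\ubar{\chi}$ being a $C$-subsolution of the equation with right-hand side $h$ by the pointwise strict inequalities
$$\sum_{l\neq j}\arctan(\mu_l(x)) > h(x) - \tfrac{\pi}{2},\qquad j=1,\dots,n,\ x\in X,$$
where $\mu_1(x),\dots,\mu_n(x)$ are the eigenvalues of the endomorphism $\omega^{i\bar k}(\chi_0+\sqrt{-1}\partial\bar\partial\ubar{u})_{k\bar j}$ at $x$. The left-hand side depends only on $\omega$ and $\ubar\chi$ and is continuous in $x$ (eigenvalues depend continuously on a continuously varying Hermitian matrix), so both statements become simple consequences of this open strict condition combined with compactness of $X$.

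For part (i), I would introduce, for each $j$, the continuous function
$$G_j(x) := \sum_{l\neq j}\arctan(\mu_l(x)) - h(x) + \tfrac{\pi}{2},$$
which by hypothesis satisfies $G_j>0$ on $X$. Since $X$ is compact, the quantity $2\epsilon := \min_{j,\,x} G_j(x)$ is strictly positive. Then for any continuous $k\colon X\to((n-2)\pi/2,n\pi/2)$ with $\|h-k\|_{C^0}<\epsilon$, one has, at every $x$ and for every $j$,
$$\sum_{l\neq j}\arctan(\mu_l(x)) - k(x) + \tfrac{\pi}{2} = G_j(x) + (h(x)-k(x)) > 2\epsilon - \epsilon = \epsilon > 0,$$
so Lemma 2.4 applied with right-hand side $k$ is satisfied, and $\ubar{\chi}$ is a $C$-subsolution of $k$.

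For part (ii), at any fixed point $x$ and index $j$ the two hypotheses give
$$\sum_{l\neq j}\arctan(\mu_l(x)) > h_1(x) - \tfrac{\pi}{2}\quad\text{and}\quad\sum_{l\neq j}\arctan(\mu_l(x)) > h_2(x) - \tfrac{\pi}{2}.$$
Because the left-hand side does not depend on $h_1,h_2$, it strictly exceeds both $\max\{h_1(x),h_2(x)\}-\pi/2$ and $\min\{h_1(x),h_2(x)\}-\pi/2$. The functions $\max\{h_1,h_2\}$ and $\min\{h_1,h_2\}$ are continuous with values in $((n-2)\pi/2,n\pi/2)$, so applying Lemma 2.4 once more yields that $\ubar\chi$ is a $C$-subsolution for each.

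I do not foresee any real obstacle; the entire argument reduces to recognizing that Lemma 2.4 reformulates the $C$-subsolution condition as an open system of strict inequalities, after which (i) is a standard compactness/continuity argument and (ii) is immediate. The only point worth double-checking is that the continuity of the eigenvalues $\mu_l(x)$ in $x$ is enough to conclude, on a compact manifold, a uniform positive lower bound for each $G_j$, which is classical.
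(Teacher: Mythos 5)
Your proof is correct and takes the same route as the paper: both reduce to the pointwise open strict inequalities of Lemma~2.4 and then invoke compactness of $X$ to get a uniform gap $\epsilon>0$. The paper packages the minimum over $j$ into a single function $H(x)$ by subtracting $\arctan(\mu_{\max}(x))$, whereas you keep the $G_j$ separate and minimize over $j$ and $x$ together, which is just a cosmetic difference.
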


\begin{proof}
For (i), assume $\ubar{\chi}$ is a $C$-subsolution of the equation 
\begin{align*}
\Theta_{\omega}(\chi) = \sum_{i = 1}^{n} \arctan (\lambda_i) = h(x), 
\end{align*}where $h\colon X \rightarrow \left ( (n-2)\frac{\pi}{2}, n \frac{\pi}{2} \right )$ is a continuous function. Let $\left \{  \mu_1, \mu_2, \dots, \mu_n \right \}$ be the eigenvalues of the Hermitian endomorphism $\omega^{i \bar{k}} \left (  \chi_0 + \sqrt{-1} \partial \bar{\partial} \ubar{u}   \right )_{j \bar{k}} = \omega^{i \bar{k}} {\ubar{\chi}}_{j \bar{k}}$. Then by \hyperlink{L:2.4}{Lemma 2.4}, we have
\begin{align*}
\sum_{l \neq j} \arctan \left ( \mu_l \right ) > h(x) - \frac{\pi}{2},
\end{align*}for all $j = 1, \dots, n$. Consider the following continuous function 
\begin{align*}
H(x) \coloneqq \Theta_\omega \left ( \ubar{\chi} \right ) - \mu_{\max}(x) - h(x),
\end{align*}where $\mu_{\max}(x)$ means the largest eigenvalue of $\{ \mu_1, \dots, \mu_n \}$ at $x$. We can see that $H(x) > -\frac{\pi}{2}$ for all $x \in X$. Since $X$ is compact and $H$ is continuous, we can find $\epsilon > 0$ sufficiently small, such that
\begin{align*}
H(x) > -\frac{\pi}{2} +  \epsilon,
\end{align*}for all $x \in X$. Thus, for any continuous function $k\colon X \rightarrow \left ( (n-2)\frac{\pi}{2}, n \frac{\pi}{2} \right )$ with $|h - k|_{C^0} < \epsilon$, we have
\begin{align*}
\sum_{l \neq j} \arctan \left ( \mu_l \right ) \geq H(x) + h(x) > H(x) + k(x) - \epsilon > k(x) - \frac{\pi}{2}.
\end{align*}By \hyperlink{L:2.4}{Lemma 2.4}, this implies that $\ubar{\chi}$ is also a $C$-subsolution of $k(x)$.\bigskip

For (ii), it follows directly from \hyperlink{L:2.4}{Lemma 2.4}.
\end{proof}

Let us state the following proposition to end this section. The proposition is due to Székelyhidi \cite{szekelyhidi2018fully}, refining previous work of Guan \cite{guan2014second}.
\hypertarget{P:2.1}{\begin{fprop}[\cite{szekelyhidi2018fully}, \cite{guan2014second}]}
Let $[a,b] \subset \left ( \left (  n-2   \right ) \frac{\pi}{2}, n \frac{\pi}{2} \right )$ and $\delta, R > 0$. There exists $\kappa > 0$ with the following property: Suppose that $\sigma \in [a,b]$ and $B$ is a hermitian matrix such that
\begin{align*}
\label{eq:2.6}
\left (  \lambda (B)  - 2 \delta \Id  + \Gamma_n \right ) \cap \partial \Gamma^{\sigma} \subset B_R(0). \tag{2.6}
\end{align*}Then for any hermitian matrix $A$ with $\lambda(A) \in \partial \Gamma^\sigma$ and $|\lambda(A)| > R$, we either have 
\begin{align*}
\label{eq:2.7}
\sum_{p, q} \eta^{p \bar{q}}(A) \left [  B_{p \bar{q}} - A_{p \bar{q}}   \right ] > \kappa \sum_{p} \eta^{p \bar{p}} (A) \tag{2.7}
\end{align*}or $\eta^{i \bar{i}} (A) > \kappa \sum_p \eta^{p \bar{p}}(A)$ for all $i$, where $\eta = \Id + A^2$.
\end{fprop}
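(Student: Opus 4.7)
First I would reduce to the case where $A$ is diagonal. Both the hypothesis (2.6) and the conclusion (2.7) are invariant under simultaneous unitary conjugation $(A,B)\mapsto(UAU^{*},UBU^{*})$, since the eigenvalues of $A,B$ and the trace $\tr\bigl(\eta(A)^{-1}(B-A)\bigr)$ are all preserved, so I may take $A=\operatorname{diag}(\lambda_1,\dots,\lambda_n)$ with $\lambda_1\geq\cdots\geq\lambda_n$. Then $\eta^{i\bar j}(A)=\delta_{ij}f_i$ with $f_i=1/(1+\lambda_i^2)$, and the conclusion (2.7) becomes the scalar inequality
\[
\sum_i f_i\bigl(B_{i\bar i}-\lambda_i\bigr)\;>\;\kappa\sum_i f_i,
\]
where $(B_{i\bar i})$ is the diagonal of $B$ in the eigenbasis of $A$. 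The central input is the convexity of the super-level set $\Gamma^{\sigma}$ from \hyperlink{L:2.3}{Lemma 2.3}(iii), which yields the supporting-hyperplane inequality $\sum_i f_i(\lambda'_i-\lambda_i)\geq 0$ for every $\lambda'\in\overline{\Gamma^{\sigma}}$.

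The plan is to argue by contradiction and compactness. Suppose no such $\kappa$ exists; extract sequences $\sigma_k\in[a,b]$, diagonal $A_k$, Hermitian $B_k$ satisfying (2.6), and $\kappa_k\downarrow 0$ along which both alternatives of (2.7) fail. By \hyperlink{L:2.3}{Lemma 2.3}(i)(iv) applied to the constraint $\Theta(\lambda^{(k)})=\sigma_k\in[a,b]$, the two smallest eigenvalues $\lambda_{n-1}^{(k)},\lambda_n^{(k)}$ remain bounded; combined with $|\lambda(A_k)|>R$ and the failure of the second alternative, this forces at least one eigenvalue of $A_k$ to diverge to $+\infty$. Partition $\{1,\dots,n\}=I_{\mathrm{big}}\sqcup I_{\mathrm{small}}$ accordingly; after subsequences, the normalized weights $\tilde f_i^{(k)}:=f_i^{(k)}/\sum_p f_p^{(k)}$ converge to limits $\tilde f_i^\infty$ that vanish exactly on $I_{\mathrm{big}}$ and sum to one on $I_{\mathrm{small}}$. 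Note that $\sum_p f_p^{(k)}$ itself stays bounded away from $0$ and $\infty$ since $n\in I_{\mathrm{small}}$ and $0<f_i\leq 1$.

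The main estimate combines the supporting-hyperplane inequality with hypothesis (2.6). The hypothesis is phrased on the eigenvalues $\lambda(B_k)$ while the quantity of interest involves the diagonal $(B_{i\bar i}^{(k)})$ in the eigenbasis of $A_k$; I would bridge the two via the Schur--Horn majorization—$(B_{i\bar i}^{(k)})$ lies in the convex hull of the permutations of $\lambda(B_k)$—together with the permutation invariance, convexity, and monotonicity $\Gamma^{\sigma}+\overline{\Gamma_n}=\Gamma^{\sigma}$ of the super-level set, producing for each $k$ a test vector $\mu^{(k)}\in\overline{\Gamma^{\sigma_k}}$ with $|\mu^{(k)}|\leq R'=R'(R,\delta,n,[a,b])$ whose choice forces
\[
\sum_i f_i^{(k)}\bigl(B_{i\bar i}^{(k)}-\lambda_i^{(k)}\bigr)\;\geq\;\sum_i f_i^{(k)}\bigl(B_{i\bar i}^{(k)}-\mu_i^{(k)}\bigr)\;\geq\;2\delta\sum_p f_p^{(k)}+o\Bigl(\sum_p f_p^{(k)}\Bigr),
\]
where the $o(\cdot)$ absorbs the $I_{\mathrm{big}}$-contribution since there $f_i^{(k)}$ decays like $1/(\lambda_i^{(k)})^2$ while $B_{i\bar i}^{(k)}$ and $\mu_i^{(k)}$ stay bounded. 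Dividing by $\sum_p f_p^{(k)}$ and letting $k\to\infty$ gives $0\geq 2\delta>0$, the desired contradiction. The main obstacle in the plan is the uniform construction of the bounded test vector $\mu^{(k)}$: one has to exploit the strict convexity and smoothness of $\partial\Gamma^{\sigma_k}$ from \hyperlink{L:2.3}{Lemma 2.3}(iii) and the continuity of $\partial\Gamma^{\sigma}$ in $\sigma\in[a,b]$ to propagate the compactness in (2.6) from $\lambda(B_k)$ to the diagonal $(B_{i\bar i}^{(k)})$, uniformly as $\sigma_k$ varies in $[a,b]$ and the eigenbasis of $A_k$ changes with $k$.
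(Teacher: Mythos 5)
The paper does not supply a proof of Proposition 2.1: it states the result and cites Székelyhidi \cite{szekelyhidi2018fully} and Guan \cite{guan2014second} (the dHYM-specific form, with $\eta=\Id+A^2$ playing the role of the linearized operator, is essentially the version proved by Collins--Jacob--Yau in \cite{collins20151} by adapting Székelyhidi's Proposition 6 to the non-concave supercritical range). So there is no in-paper argument to compare against; I evaluate the proposal on its own.

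Your setup is right: the unitary reduction, the identification $\eta^{i\bar j}(A)=\delta_{ij}/(1+\lambda_i^2)$, the supporting-hyperplane inequality from \hyperlink{L:2.3}{Lemma 2.3}(iii), and the idea of invoking Schur--Horn to bridge the diagonal $(B_{i\bar i})$ and $\lambda(B)$ are the correct ingredients. But the plan contains a genuine gap that you yourself flag as ``the main obstacle'': the construction of the bounded test vector $\mu^{(k)}$ with the displayed two-line estimate is not carried out, and it is the content of the proposition rather than a routine compactness step. To fill it one must at least do the following, none of which is addressed. (i) Split into the cases where $\bigl(\lambda(B)-2\delta\Id+\Gamma_n\bigr)\cap\partial\Gamma^{\sigma}$ is nonempty (pick $\mu\in B_R(0)$ in it) versus empty; in the latter case the whole shifted cone lies in $\Gamma^\sigma$ (rays into $\Gamma_n$ eventually enter $\Gamma^\sigma$), so $\lambda(B)-2\delta\Id\in\overline{\Gamma^\sigma}$, and the support inequality is applied to that vector instead---the two cases yield different starting points. (ii) Exploit the monotone structure of the weights: by \hyperlink{L:2.3}{Lemma 2.3}(i),(iv) and $|\lambda(A)|>R$ one has $\lambda_1\geq\cdots\geq\lambda_{n-1}>0$ with $\lambda_1$ large and $\lambda_n$ bounded, hence $f_1\leq\cdots\leq f_{n-1}$ is tiny at the top while $f_n$ is bounded below; the direction of the Schur--Horn/rearrangement step hinges on this ordering, and without spelling it out the inequality $\sum_i f_i^{(k)}(B_{i\bar i}^{(k)}-\mu_i^{(k)})\geq 2\delta\sum_p f_p^{(k)}+o(\cdots)$ has no justification. (iii) Control $\lambda(B_k)$ along the extracted subsequence; this does follow from (2.6) together with $\sigma_k\in[a,b]$, but it is used implicitly and is exactly where the uniformity over $\sigma$ and over the rotating eigenbasis of $A_k$ enters. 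Absent (i)--(iii), the contradiction does not close. As a structural remark, the compactness wrapper also obscures that an explicit $\kappa$ can be produced directly once (i)--(ii) are in hand (choose $R$ large via \hyperlink{L:2.3}{Lemma 2.3} so that $f_1\leq\kappa_0\sum_p f_p$ is forced, then apply Schur--Horn plus the support inequality to $\mu$); the indirect framing is not wrong, but it postpones rather than resolves the key estimate.
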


\hypertarget{S:3}{\section{A Priori Estimate}}
\subsection{A Priori Estimate}

First, let us summarize the proof of a priori estimate. Under the assumption of $C$-subsolution, by the Alexandroff--Bakelman--Pucci estimate, the $C^0$ estimate is due to Székelyhidi \cite{szekelyhidi2018fully}. Then, we use the maximum principle to obtain that the $C^2$ norm can be bounded by the $C^1$ norm. The Kähler case is due to Collins--Jacob--Yau \cite{collins20151}. Here, different from their work, we deal with the Hermitian form, and thus need to consider the torsions. By the equation itself and the phase condition assumption, the maximum principle will not encounter any blow up situation. Once we have the above type inequality, by a blowup argument due to Dinew--Kołodziej \cite{dinew2017liouville}, we can get the $C^1$ estimate. In the last of this section, we also obtain a direct $C^1$ estimate thanks to Collins's advices, which comes from Collins--Yau \cite{collins2018moment}. \bigskip

To get further $C^{2, \alpha}$ estimate, we look at equation (\ref{eq:1.3}) itself, equation (\ref{eq:1.3}) is only concave when $h \colon X \rightarrow [(n-1)\frac{\pi}{2}, n \frac{\pi}{2})$. However, by \hyperlink{L:2.3}{Lemma 2.3, (i)}, we know that in the supercritical phase case, the level set is convex. In order to apply the standard Evans--Krylov theory to get the $C^{2, \alpha}$ estimate, Collins--Jacob--Yau applied an extension trick, which is due to Wang \cite{wang2012c} (see also Tosatti--Wang--Weinkov--Yang \cite{tosatti2015c}). After the extension, they get a real uniformly elliptic concave operator, to which they apply the Evans--Krylov theory. Alternatively, to get the $C^{2, \alpha}$ estimate, we can also follow Collins--Picard--Wu \cite{collins2017concavity}. They introduced an elliptic operator which is indeed concave, and whose level sets agree with the level sets of the original operator. Last, for the higher order estimates, we can get it by standard Schauder theory. \bigskip

The crucial part for this section will be the $C^2$ estimate while other estimates could be obtained as before. Here, we use a different test function and a perturbation trick to apply the maximum principle.

\subsection{\texorpdfstring{$C^2$}{C2} Estimate of Deformed Hermitian--Yang--Mills Equation}
In this section, we prove the following $C^2$ estimate, which is a generalization of Collins--Jacob--Yau's result \cite{collins20151}. They proved this theorem when $\omega$ is Kähler and $[\chi_0] \in H^{1,1}(X; \mathbb{R})$, we consider the case that $\omega$ is Hermitian and $[\chi_0]$ is in the Bott--Chern cohomology instead. We use symmetric functions to simplify their proof and also we find a way to generalize it to the Hermitian case. To deal with the Hermitian case, we need to handle the torsions. One can see that the dHYM equation is actually a nice equation in the sense that when dealing with some singularities coming from the torsions, the signs which are obtained from the dHYM equation itself actually work pretty well. We will see that later.

\hypertarget{T:3.1}{\begin{ftheorem}[$C^2$-estimates]}
Let $X^n$ be a connected compact Hermitian manifold with Hermitian form $\omega$. Suppose $u \colon X \rightarrow \mathbb{R}$ is a smooth function solving $\Theta_{\omega}(\ubar{\chi} + \sqrt{-1} \partial \bar{\partial} u) = h(x)$, where $h \colon  X \rightarrow [(n-2)\frac{\pi}{2} + \epsilon_0, n \frac{\pi}{2} )$ for some $\epsilon_0 > 0$ and $\ubar{\chi}$ is the $C$-subsolution. Then there exists a constant $C$ such that 
\begin{align*}
\label{eq:3.1}
|\partial \bar{\partial} u |_{\omega} \leq C \left ( 1 + \sup_X |\nabla u|^2_{\omega} \right), \tag{3.1}
\end{align*}where $C = C\left ( |h|_{C^2(X, \omega)}, \osc_X u, \ubar{\chi}, \omega, \epsilon_0  \right)$ and $\nabla$ is the Chern connection with respect to $\omega$.
\end{ftheorem}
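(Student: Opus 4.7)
The strategy is the maximum principle applied to the test function
$$G \;=\; \log \lambda_1 \;+\; \varphi(|\nabla u|^2) \;-\; M(u - \ubar{u}),$$
where $\lambda_1$ is the largest eigenvalue of $\Lambda^i_j = \omega^{i\bar k}\chi_{j\bar k}$, $\varphi(t) = -\tfrac{1}{2}\log\bigl(1 - t/(2K)\bigr)$ with $K = 1 + \sup_X |\nabla u|^2$, and $M \gg 1$ is a constant to be fixed at the end. Fix a maximum point $x_0$ of $G$ and choose normal coordinates for $\omega$ at $x_0$ so that $\omega_{i\bar j}(x_0) = \delta_{ij}$ and $\chi_{i\bar j}(x_0)$ is diagonal with $\lambda_1 \geq \cdots \geq \lambda_n$. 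Since $\lambda_1$ is not smooth at repeated eigenvalues, I perturb $\Lambda(x_0)$ by a small diagonal matrix $\operatorname{diag}(0,t_2,\dots,t_n)$ with $0<t_2<\cdots<t_n$ so that the eigenvalues become distinct; Lemmas 2.1--2.2 then apply, and the estimate is recovered by taking $t_j\to 0$ at the end.

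Apply the linearized operator $L := F^{i\bar j}\nabla_i\nabla_{\bar j}$, with $F^{i\bar i}(x_0) = f_i := 1/(1+\lambda_i^2)$ and off-diagonal entries zero. By Lemmas 2.1--2.2,
$$L\log\lambda_1 \;=\; \tfrac{1}{\lambda_1}F^{i\bar i}\nabla_i\nabla_{\bar i}\chi_{1\bar 1} \;+\; \tfrac{2}{\lambda_1}\sum_{i,\,p\neq 1}\frac{F^{i\bar i}|\nabla_i \chi_{1\bar p}|^2}{\lambda_1-\lambda_p} \;-\; \tfrac{1}{\lambda_1^2}F^{i\bar i}|\nabla_i\chi_{1\bar 1}|^2,$$
where the middle sum is $\geq 0$ since $\lambda_1$ is largest. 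Differentiating $\Theta_\omega(\chi)=h$ twice in the $\partial_1\partial_{\bar 1}$ direction gives $F^{i\bar j}\nabla_1\nabla_{\bar 1}\chi_{i\bar j} = \nabla_1\nabla_{\bar 1}h - F^{i\bar j,r\bar s}\nabla_1\chi_{i\bar j}\nabla_{\bar 1}\chi_{r\bar s}$; the concavity term on the right is $\leq 0$ since in the supercritical phase $\lambda_i+\lambda_j>0$ for every pair (Lemma 2.3(i)), so the off-diagonal coefficient $-(\lambda_i+\lambda_j)/[(1+\lambda_i^2)(1+\lambda_j^2)]\le 0$. Commuting $\nabla_1\nabla_{\bar 1}\chi_{i\bar j}$ with $\nabla_i\nabla_{\bar j}\chi_{1\bar 1}$ in the Chern connection produces curvature and torsion errors. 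The piece $L\varphi(|\nabla u|^2)$ contributes the good convexity term $\varphi''F^{i\bar i}|\nabla_i|\nabla u|^2|^2$ plus $\varphi' L|\nabla u|^2$, and $-ML(u-\ubar u) = -M\sum_i f_i(\lambda_i - \ubar\chi_{i\bar i})$. The vanishing of $\nabla G$ at $x_0$ yields
$$\tfrac{1}{\lambda_1}\nabla_i\chi_{1\bar 1} \;=\; -\varphi'\nabla_i|\nabla u|^2 + M\,\nabla_i(u-\ubar u),$$
so the dangerous term $-\lambda_1^{-2}F^{i\bar i}|\nabla_i\chi_{1\bar 1}|^2$ is absorbed by $\varphi'' F^{i\bar i}|\nabla_i|\nabla u|^2|^2$ through the choice $\varphi''=2(\varphi')^2$, at the cost of a controllable $M^2 K\sum_i f_i$ error.

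Now invoke Proposition 2.1 with the matrix $A := \chi(x_0)$ and the $C$-subsolution $B := \ubar\chi(x_0)$. In the first alternative, $\sum_i f_i\bigl(\ubar\chi_{i\bar i}-\lambda_i\bigr) > \kappa \sum_i f_i$, so $-ML(u-\ubar u)$ supplies $M\kappa\sum_i f_i$ with the right sign to dominate every remaining term — the torsion errors, the $M^2K\sum_i f_i$ absorption cost, and the bounded term $\lambda_1^{-1}\nabla_1\nabla_{\bar 1}h$ — once $M$ is taken large depending on $\epsilon_0$, $|h|_{C^2}$, and $\ubar\chi$. In the second alternative, $f_i > \kappa\sum_p f_p$ for every $i$ and the operator is uniformly elliptic at $x_0$, in which case the concavity term together with Lemma 2.3(iii) bounds $\lambda_1(x_0)$ directly. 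In either case, $LG\leq 0$ at $x_0$ forces $\lambda_1(x_0)\leq C(1+K)$, which is (\ref{eq:3.1}) after removing the perturbation.

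The main obstacle, absent from the Kähler argument of \cite{collins20151}, is the torsion contribution of the Chern connection, which enters in two ways: commutators $[\nabla_i,\nabla_{\bar j}]\chi$ produce terms of the schematic form $f_i\lambda_i T$ and $f_i\lambda_1 T$, while replacing coordinate by covariant derivatives in $L|\nabla u|^2$ produces $f_i\,\nabla u\cdot T$ and $f_i\,\nabla^2u\cdot T$ pieces. The two special features of dHYM that rescue the argument are (i) the bound $f_i\lambda_i^2 = \lambda_i^2/(1+\lambda_i^2)\leq 1$, which tames every torsion coefficient that carries an extra factor of $\lambda_i$, and (ii) the consequence $|\lambda_n|\leq C(\epsilon_0)$ of the supercritical phase (Lemma 2.3(iv)), which gives a uniform lower bound $\sum_i f_i\geq c(\epsilon_0)>0$. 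Combining (i)--(ii) with Cauchy--Schwarz allows every torsion term to be absorbed into the decisive $M\kappa\sum_i f_i$ contribution, which is the ``nice sign'' of the dHYM equation hinted at in the introduction.
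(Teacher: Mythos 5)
Your proposal tracks the paper's broad outline — same style of test function (largest eigenvalue plus a gradient-squared term plus a multiple of $u-\ubar{u}$), the perturbation to distinct eigenvalues so that Lemmas \hyperlink{L:2.1}{2.1}--\hyperlink{L:2.2}{2.2} apply, and the use of \hyperlink{P:2.1}{Proposition 2.1} to exploit the $C$-subsolution. But there are two places where the argument as written breaks down, and both are exactly the points the paper spends most of its effort on.

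\textbf{The concavity claim is false in the supercritical range.} You assert that $-F^{i\bar j,r\bar s}\nabla_1\chi_{i\bar j}\nabla_{\bar 1}\chi_{r\bar s}\le 0$ because the off-diagonal coefficient $-(\lambda_i+\lambda_j)/[(1+\lambda_i^2)(1+\lambda_j^2)]\le 0$. That handles $i\ne j$, but by \hyperlink{L:2.2}{Lemma 2.2} the second derivative $F^{i\bar j,r\bar s}$ also carries the diagonal block $f_{ii}=-2\lambda_i/(1+\lambda_i^2)^2$, and $f_{nn}>0$ whenever $\lambda_n<0$. In the supercritical (as opposed to hypercritical) phase $\lambda_n<0$ is allowed — this is precisely why $\Theta_\omega$ fails to be concave here — so the full Hessian term is \emph{not} of one sign and produces the bad positive contribution $-\tfrac{2\tilde\lambda_n}{(1+\tilde\lambda_n^2)^2}\,|\hat\chi_{n\bar n,\bar 1}|^2$ visible in (\ref{eq:3.19}) and (\ref{eq:3.24}). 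This is the whole difficulty: if it were nonpositive, the theorem would hold with a much shorter proof. The paper controls it in (\ref{eq:3.27})--(\ref{eq:3.31}) by rewriting $\hat\chi_{n\bar n,1}$ via the differentiated equation (\ref{eq:3.12}), applying Cauchy--Schwarz, and then invoking the algebraic inequality $\sigma_{n-1}(\tilde\lambda)/\sigma_n(\tilde\lambda)\le-\tan(\epsilon_0/2)$ from \hyperlink{L:2.3}{Lemma 2.3}, all of which hinges on the choice of the perturbation $\tilde\Lambda$ and the supercritical lower bound. None of this appears in your sketch, which would therefore only close in the hypercritical phase $h>(n-1)\pi/2$.

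\textbf{The absorption of the $M^2K$ error runs the wrong way.} You propose that the Cauchy--Schwarz cost $M^2K\sum_if_i$ is ``dominated by $M\kappa\sum_if_i$ once $M$ is taken large,'' but increasing $M$ makes the quadratic error grow faster than the linear subsolution gain. In the paper's version this cost does not appear with a factor $\sum_if_i$ at all: the contributions $|\nabla_s\lambda_1|^2$ for $s\ne 1$ are first absorbed by the positive terms in (\ref{eq:3.23}), leaving only the $s=1$ term, so the critical-point Cauchy--Schwarz error arrives with a $(1+\tilde\lambda_1^2)^{-1}$ prefactor (see (\ref{eq:3.32}) together with (\ref{eq:3.37})). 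It is then balanced not against the subsolution term but against the good quadratic piece $\tfrac{1}{2(1+\tilde\lambda_1^2)}\cdot\tfrac{|u_{1\bar 1}|^2}{2K-|\nabla u|^2}\gtrsim\tfrac{\tilde\lambda_1^2}{K(1+\tilde\lambda_1^2)}$ coming from $\mathcal{L}_F$ of the gradient term; the common $(1+\tilde\lambda_1^2)^{-1}$ cancels and this yields $\lambda_1\le CAK$, which is (\ref{eq:3.1}). The subsolution contribution $\tfrac{A\kappa}{2(1+\tilde\lambda_n^2)}$ is used only to swallow the $O(1)$ constants ((\ref{eq:3.39})), so $A$ is fixed once and for all independent of $\lambda_1$. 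Without the step that removes the $s\ne 1$ contributions to $|\nabla\lambda_1|^2$, the error really does carry $\sum_if_i\ge c(\epsilon_0)>0$ and the argument cannot close. Your structural observations about the torsion — that $f_i\lambda_i^2\le 1$ and $\sum_if_i\ge c(\epsilon_0)$ tame the extra $\omega$-derivative terms — are correct in spirit and match the paper's estimates (\ref{eq:3.42})--(\ref{eq:3.44}), but the two gaps above need to be filled before that part matters.
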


To start with, let $\Lambda^j_i \coloneqq   \omega^{j \bar{p}} \left ( \ubar{\chi} + \sqrt{-1} \partial \bar{\partial} u \right )_{i \bar{p}} = \omega^{j \bar{p}} \hat{\chi}_{i \bar{p}}$ be a Hermitian endomorphism, where $\hat{\chi} \coloneqq \ubar{\chi} + \sqrt{-1} \partial \bar{\partial} u$, and let $\lambda_1 \geq \lambda_2 \geq \dots \geq \lambda_n$ be the eigenvalues of $\Lambda$. Note that $\ubar{\chi}$ here is a $C$-subsolution.\bigskip

When $\omega$ is a Hermitian metric, we have to deal with torsions, which is due to that $\omega$ is not Kähler. We will show that the dHYM equation is good in the sense that the torsions can be handled well. Moreover, similar to \cite{collins20151}, we have to choose a nice coordinate near a particular point. Here, since it is not Kähler, the best we can do is the following coordinate, which is used by Guan--Li \cite{guan2009complex} and Streets--Tian \cite{streets2013regularity},

\hypertarget{L:3.1}{\begin{flemma}[\cite{guan2009complex}, \cite{streets2013regularity}]}At any point $x \in X$, there exists a holomorphic coordinate at $x$ such that 
\begin{align*}
\label{eq:3.2}
\omega_{i \bar{j}} (x) = \delta_{ij}; \quad \hat{\chi}_{i \bar{j}} (x) = \lambda_i \delta_{ij};  \quad \omega_{i \bar{i}, j} (x) = 0,  \tag{3.2} 
\end{align*}for all $i, j \in \{ 1, \dots, n \}$.
\end{flemma}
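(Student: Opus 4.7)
The plan is to carry out a three-step normalization at $x$: first use a complex linear change of variables to put $\omega$ in Euclidean form, then apply a unitary change to simultaneously diagonalize the Hermitian endomorphism $\hat{\chi}$, and finally use a holomorphic quadratic change of variables to eliminate the $n^2$ diagonal first derivatives $\omega_{i\bar{i},j}(x)$.

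For the first two steps, I start with any holomorphic chart centered at $x$. Since $\omega(x)$ is a positive definite Hermitian form on $T_x X$, a linear change of variables gives $\omega_{i\bar{j}}(x)=\delta_{ij}$. In these new coordinates $\hat{\chi}_{i\bar{j}}(x)$ is just a Hermitian matrix, so the spectral theorem supplies a unitary $U$ with $U^*\hat{\chi}(x)U=\mathrm{diag}(\lambda_1,\dots,\lambda_n)$. Applying $U$ preserves the identity $\omega(x)=I$, so both of the first two equalities in (3.2) hold. Since these steps are purely linear, the derivative $\omega_{i\bar{j},k}(x)$ transforms tensorially and is left at our disposal for the final step.

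The substantive step is the third one. I try a holomorphic quadratic change
\[
w^a \;=\; z^a + \tfrac12\, a^a_{jk}\, z^j z^k, \qquad a^a_{jk}=a^a_{kj},
\]
whose Jacobian at $0$ is the identity, so it preserves the two conditions already achieved. Inverting yields $z^a=w^a-\tfrac12 a^a_{jk}w^jw^k+O(|w|^3)$, and plugging this into the transformation law $\omega'_{a\bar{b}}(w)=\omega_{i\bar{j}}(z(w))\,\partial_{w^a}z^i\,\overline{\partial_{w^b}z^j}$ gives, at the basepoint,
\[
\omega'_{a\bar{b},c}(x) \;=\; \omega_{a\bar{b},c}(x) \;-\; a^b_{ac}.
\]
Specializing to $a=b=i$ and $c=j$, the requirement $\omega'_{i\bar{i},j}(x)=0$ becomes $a^i_{ij}=\omega_{i\bar{i},j}(x)$: these are $n^2$ independent scalar equations, one for each pair $(i,j)$, and they are automatically compatible with the symmetry $a^i_{ij}=a^i_{ji}$ in the lower indices. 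The remaining coefficients $a^a_{jk}$ may be set to zero.

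The main potential obstacle is the well known fact that on a non-Kähler Hermitian manifold one cannot kill \emph{all} entries $\omega_{i\bar{j},k}(x)$ by a quadratic change of coordinates, because the torsion $T^k_{ij}=\omega^{k\bar{\ell}}(\omega_{i\bar{\ell},j}-\omega_{j\bar{\ell},i})$ is an honest tensorial obstruction. The point that makes the present lemma succeed is that we only demand vanishing of the $n^2$ diagonal entries $\omega_{i\bar{i},j}$, and the transformation rule above shows that each of these is controlled by exactly one free coefficient $a^i_{ij}$, with no coupling to the off-diagonal data that carries the torsion. Hence the system is diagonal, solvable, and compatible with steps one and two, completing the construction.
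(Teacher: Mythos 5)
Your construction is correct and is essentially identical to the paper's: after normalizing $\omega(x)=\Id$ and diagonalizing $\hat{\chi}(x)$, the paper directly writes down the quadratic change $w_i = z_i + \sum_{j\neq i}\omega_{i\bar{i},j}(x)\,z_i z_j + \tfrac12\,\omega_{i\bar{i},i}(x)\,z_i^2$, which is precisely the choice $a^i_{ij}=\omega_{i\bar{i},j}(x)$ (all other $a^a_{jk}=0$) in your general ansatz $w^a=z^a+\tfrac12 a^a_{jk}z^jz^k$. Your derivation of the transformation rule $\omega'_{a\bar b,c}=\omega_{a\bar b,c}-a^b_{ac}$ and your observation about why only the diagonal entries $\omega_{i\bar i,j}$ (not the torsion-carrying off-diagonal ones) can be killed are both correct and match the paper's intent, just spelled out more systematically.
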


\begin{proof}
Let $\left \{ z_1, z_2, \dots, z_n   \right \}$ be a local coordinate near $x$ such that $z_i (x) = 0, \omega_{i \bar{j}}(x) = \delta_{ij}$, and $\hat{\chi}_{i \bar{j}}(x) = \lambda_i \delta_{ij}$.

We can define a new coordinate $\left \{ w_1, w_2, \dots, w_n \right \}$ by $w_i \coloneqq z_i + \sum_{j \neq i} \frac{\partial \omega_{i \bar{i}}}{\partial z_j}(x) z_i z_j + \frac{1}{2} \frac{\partial \omega_{i \bar{i}}}{\partial z_i}(x) z_i^2$.\\
One can check that $\omega \left ( \frac{\partial}{\partial w_i} , \frac{\partial}{\partial \bar{w}_j}  \right )(x) = \delta_{ij}; \quad \hat{\chi} \left (  \frac{\partial}{\partial w_i} , \frac{\partial}{\partial \bar{w}_j} \right ) (x) = \lambda_i \delta_{ij}; \quad \frac{ \partial \omega \left (  \frac{\partial}{\partial w_i} , \frac{\partial}{\partial \bar{w}_i}   \right )}{\partial w_j} (x) = 0$.

\end{proof}

From now on, without further notice, we always use the above coordinate. Let $\hat{\chi} \coloneqq \ubar{\chi} + \sqrt{-1} \partial \bar{\partial} u$. We can define a Hermitian metric $\eta_{i \bar{j}} \coloneqq \omega_{i \bar{j}} + \hat{\chi}_{i \bar{q}} \omega^{p \bar{q}} \hat{\chi}_{p \bar{j}}$. With this Hermitian metric, we get the following elliptic operator, which actually comes from the linearization of the operator $\Theta_{\omega}(\hat{\chi})$,
\begin{align*}
\label{eq:3.3}
 \Delta_{\eta} \coloneqq  \eta^{i \bar{j}}  \frac{\pa^2}{\pa z_i\pa\bar z_j}. \tag{3.3}
\end{align*}

Now, slightly different from Collins--Jacob--Yau \cite{collins20151}, we consider the following test function
\begin{align*}
\label{eq:3.4}
U_0 (x) \coloneqq \log \left (  C_{\epsilon_0} + \lambda_{\text{max}}   \right ) - \frac{1}{2}  \log \left( 1 - \frac{| \nabla u |^2}{2 K}  \right )  - A u  \tag{3.4}
\end{align*}where $C_{\epsilon_0} \geq 2 C(\frac{\epsilon_0}{2})$, $K \coloneqq 1 + \sup_X \left | \nabla u \right |^2_\omega$ and $A$ is a constant which will be determined later. Here $C(\frac{\epsilon_0}{2})$ is the constant in \hyperlink{L:2.3}{Lemma 2.3}.\bigskip 

The above quantity is inspired by Hou--Ma--Wu \cite{hou2010second} for the complex Hessian equations and subsequently used by Székelyhidi \cite{szekelyhidi2018fully} for a larger class of concave equations. One can also check \cite{chou2001variational, collins20151, hou2010second, szekelyhidi2018fully, wang2014hessian, smoczyk2002mean} for similar test function. We want to apply the maximum principle to the above test function $U_0$. Let us say that the test function $U_0 (x)$ achieves the maximum at $x_0$. To apply the maximum principle, we need to show the function is twice differentiable at the maximum point $x_0$. But the function $U_0 (x)$ is just a continuous function---it might not be twice differentiable at the maximum point, so here we apply a perturbation trick similar to the one used in \cite{collins20151} and \cite{szekelyhidi2018fully}. Here, we choose a constant diagonal matrix $B$, defined near $x_0$. The choice of $B$ depends on the smallest eigenvalue $\lambda_n$ of $\Lambda$ at the maximum point $x_0$.
  
\begin{itemize}
\hypertarget{tilde Lambda}{\item} If $\lambda_n > 0$, then we pick the constant matrix $B$ to be a diagonal matrix with real entries $B_{11} = 0$ and 
\begin{align*}
0 < B_{22} < \dots < B_{n-1 \ n-1}  < \frac{1}{2} \min \left \{ \kappa, \tan \left (  \frac{\epsilon_0}{2}  \right )  \right \},\quad 0 < B_{nn} < \frac{1}{2} \min \left \{ \kappa, \tan \left (  \frac{\epsilon_0}{2}  \right )  \right \}
\end{align*}
such that $\lambda_1 = \tilde{\lambda}_1 > \tilde{\lambda}_2 > \dots > \tilde{\lambda}_{n-1} > \frac{\tan \left (  \frac{\epsilon_0}{2}  \right )}{2} > \tilde{\lambda}_n > 0$ and $\sum_i \arctan \left ( \tilde{\lambda}_i   \right ) > \frac{n-2}{2} \pi + \frac{\epsilon_0}{2}$.
\item If $\lambda_n \leq 0$, then we pick the constant matrix $B$ to be a diagonal matrix with real entries $B_{11} = 0 = B_{nn}$ and 
\begin{align*}
0 < B_{22} < \dots < B_{n-1 \ n-1}  < \frac{1}{2} \min \left \{ \kappa, \tan \left (  \frac{\epsilon_0}{2}  \right )  \right \}
\end{align*}
such that $\lambda_1 = \tilde{\lambda}_1 > \tilde{\lambda}_2 > \dots > \tilde{\lambda}_{n-1} > \frac{\tan \left (  \frac{\epsilon_0}{2}  \right )}{2} > 0 \geq \tilde{\lambda}_n = \lambda_n$ and $\sum_i \arctan \left ( \tilde{\lambda}_i   \right ) > \frac{n-2}{2} \pi + \frac{\epsilon_0}{2}$.\bigskip
\end{itemize}

Note that the $\kappa$ here is the quantity in \hyperlink{P:2.1}{Proposition 2.1}. For either case, we want all the eigenvalues of $\tilde{\Lambda} = \Lambda - B$ are distinct from each other at the maximum point $x_0$, hence they are again distinct from each other near $x_0$. We will show that in the end the estimate will not depend on the choice of $B$. \bigskip

Consider the following locally defined test function $U(x)$ which is a perturbation of $U_0(x)$ near the maximum point $x_0$,
\begin{align*}
\label{eq:3.5}
U(x) \coloneqq \log \left (  C_{\epsilon_0} + \tilde{\lambda}_{\text{max}}   \right ) - \frac{1}{2}  \log \left( 1 - \frac{| \nabla u |^2}{2 K}  \right )  - A u, \tag{3.5}
\end{align*}where $\tilde{\lambda}_{\text{max}}$ denotes the largest eigenvalue of $\tilde{\Lambda}$. Note that $U(x) \leq U_0(x)$ and that $G(x)$ achieves its maximum at $x_0$, where we have $U_0(x_0) = U(x_0)$. Let $\tilde{\lambda}_i$ be the eigenvalues of $\tilde{\Lambda}$. Then $\tilde{\lambda}_1 = \lambda_1$, and all the remaining eigenvalues are distinct from $\tilde{\lambda}_1$. In particular, $\tilde{\lambda}_1$ will be a smooth function near $x_0$. Now, instead of considering the elliptic operator $\Delta_{\eta}$ acting on the function $U(x)$, we perturb the elliptic operator $\Delta_{\eta}$ and get a new elliptic operator $\mathcal L_F$ which is again only defined near the maximum point $x_0$. We set

\begin{align*}
\label{eq:3.6}
\mathcal L_F \coloneqq \frac{\pa F  }{\pa \Lambda^k_i } \left ( \tilde{\Lambda}  \right )  \omega^{k \bar{j}}  \frac{\pa^2}{\pa z_i\pa\bar z_j},\tag{3.6}
\end{align*}where $F = F(\Lambda) = f(\lambda_1, \dots, \lambda_n)$ is a symmetric function defined on Hermitian matrix $\Lambda$, which only depends on the eigenvalues of $\Lambda$. We define $F$ to be
\begin{align*}
F(\Lambda) \coloneqq - \sqrt{-1} \log \left ( \frac{\det \left ( \Id + \sqrt{-1} \Lambda   \right )}{\sqrt{\det \left ( \Id +  \Lambda^2   \right )}} \right );  \ 
f(\lambda_1, \dots, \lambda_n) \coloneqq \sum_{i} \arctan (\lambda_i).
\end{align*}Note that the symmetric function $F$ is considered in Jacob--Yau \cite{jacob2017special}. We have to specify a branch cut for the logarithm by having $F = 0$ when $\Lambda = 0$.\bigskip

We can rewrite the equation as
\begin{align*}
\label{eq:3.7}
F(\Lambda) =  \Theta_{\omega}(\chi) = \sum_{i = 1}^{n} \arctan (\lambda_i) = h(x), \tag{3.7}
\end{align*}where $\Lambda = \omega^{-1} \chi$.\bigskip

The symmetric function equations were studied by Caffarelli--Nirenberg--Spruck \cite{caffarelli1985dirichlet} for the Dirichlet problem in the real case, and have been studied extensively. See, for instance, \cite{bedford1976dirichlet, guan2014dirichlet, guan2002convex, guan1994weyl, guan2003christoffel, guan2006christoffel, kolodziej1998complex, li2004dirichlet, urbas1990expansion} and the references therein. One of the most important equation will be the Monge--Ampère equation, where we take $F(\Lambda) = \log \left (  \det \Lambda   \right )$. The complex Monge--Ampère equation was first solved on compact Kähler manifolds by Yau \cite{yau1978ricci} and on compact Hermitian manifolds by Tosatti--Weinkov \cite{tosatti2010estimates, tosatti2010complex} with some earlier works by Cherrier \cite{cherrier1987equations}, Hanani \cite{hanani1996equations}, Zhang \cite{zhang2010priori} and Guan--Li \cite{guan2009complex}.\bigskip

By \hyperlink{L:2.2}{Lemma 2.2}, since the eigenvalues of $\tilde{\Lambda}$ are distinct from each other, thus the derivatives of $F$ with respect to the entries $\Lambda_i^k$ at $\tilde{\Lambda}$ almost equal to the coefficients of our original elliptic operator $\Delta_\eta$. The difference is that the eigenvalues will be the perturbed one instead.\bigskip

Now, we try to apply the maximum principle, that is, we consider the following equation later in this section,

\begin{align*}
\label{eq:3.8}
\mathcal L_F \left ( U(x) \right ) &= \mathcal L_F \left ( \log \left (  C_{\epsilon_0} + \tilde{\lambda}_{\text{max}}   \right ) - \frac{1}{2}  \log \left( 1 - \frac{| \nabla u |^2}{2 K}  \right )  - A u \right ). \tag{3.8}
\end{align*}

For the first term $\log \left (  C_{\epsilon_0} + \tilde{\lambda}_{\text{max}}   \right )$ of our test function (\ref{eq:3.8}), we can also view it as a function determined by the eigenvalues. We write
\begin{align*}
G(\Lambda) \coloneqq \log \left ( C_{\epsilon_0} + \lim_{k \rightarrow \infty} \sqrt[k]{ \left | \Tr (\Lambda^k)   \right |} \right );  \ g(\lambda_1, \dots, \lambda_n) \coloneqq \log \left ( C_{\epsilon_0} + \text{max}_i \left | \lambda_i   \right | \right ).
\end{align*}

In particular, by the phase condition and \hyperlink{L:2.3}{Lemma 2.3}, we see that
\begin{align*}
g \left (   \tilde{\lambda}_1, \dots, \tilde{\lambda}_n \right ) &= \log \left ( C_{\epsilon_0} + \tilde{\lambda}_1 \right ) = \log \left ( C_{\epsilon_0} +  \lambda_1 \right ).
\end{align*}

For the functions $f$ and $g$, by considering the variation with respect to the eigenvalues, we have the following Lemmas.

\hypertarget{L:3.2}{\begin{flemma}}
By taking $f(\lambda) = \sum_{i = 1}^n \arctan (\lambda_i)$ and $g(\lambda)=\log(C_{\epsilon_0} +\lambda_1)$, we have 
\begin{align*}
\label{eq:3.9}
f_i = \frac{1}{1+ \lambda_i^2}&, \qquad g_i = \delta_{1i} \frac{1}{C_{\epsilon_0} + \lambda_1}, \tag{3.9} \\
\label{eq:3.10}
f_{ij}=-\delta_{ij}\frac{2\lambda_i}{(1+\lambda_i^2)^2}&,\qquad g_{ij}=-\delta_{1i}\delta_{1j}\frac{1}{(C_{\epsilon_0} +\lambda_1)^2}. \tag{3.10}
\end{align*}Here $f_i \coloneqq \frac{\partial  f}{\partial \lambda_i}, g_i \coloneqq \frac{\partial g}{\partial \lambda_i}, f_{ij} \coloneqq \frac{\partial^2 f}{\partial \lambda_i \partial \lambda_j}, g_{ij} \coloneqq \frac{\partial^2 g}{\partial \lambda_i \partial \lambda_j}.$
\end{flemma}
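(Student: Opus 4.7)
The statement is a pure calculus computation on two explicit functions of the eigenvalues, so my plan is simply to carry out the two differentiations separately and track what simplifies at the evaluation point.

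For $f(\lambda)=\sum_{i=1}^n \arctan(\lambda_i)$, the function is a sum of one-variable arctangents, so each partial $f_i=\partial f/\partial\lambda_i$ only sees the $i$-th term and gives $1/(1+\lambda_i^2)$ by the standard derivative of $\arctan$. Differentiating a second time, the mixed partials vanish because $f$ separates as a sum, and the pure second partial is the derivative of $(1+\lambda_i^2)^{-1}$, namely $-2\lambda_i(1+\lambda_i^2)^{-2}$. This gives $f_{ij}=-\delta_{ij}\,2\lambda_i(1+\lambda_i^2)^{-2}$ as claimed.

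For $g$, the key remark is that although $g$ is originally written as $\log(C_{\epsilon_0}+\max_i|\lambda_i|)$, the perturbation of $\Lambda$ by $B$ at the point $x_0$ (together with the supercritical phase condition and Lemma 2.3) makes the largest eigenvalue $\tilde\lambda_1$ strictly bigger than the absolute value of every other $\tilde\lambda_j$ and positive. So in a neighborhood of the distinguished configuration one has $g(\lambda)=\log(C_{\epsilon_0}+\lambda_1)$, a smooth function of the single variable $\lambda_1$. Then $g_i=\delta_{1i}(C_{\epsilon_0}+\lambda_1)^{-1}$ and $g_{ij}=-\delta_{1i}\delta_{1j}(C_{\epsilon_0}+\lambda_1)^{-2}$ by elementary one-variable differentiation.

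There is no real obstacle here beyond being careful about where $g$ is actually smooth: one must justify that, after the perturbation by $B$ made earlier, we are in a regime where the maximum is attained uniquely by $\lambda_1>0$, so that the Kronecker deltas in $g_i$ and $g_{ij}$ are the correct expression for the derivatives (rather than having to invoke the abstract formulas of Lemma 2.1 and Lemma 2.2 for coinciding eigenvalues). Once that is noted, both formulas in the lemma follow by direct differentiation.
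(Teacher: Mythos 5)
Your proof is correct and is exactly the direct computation the paper has in mind; the paper itself states Lemma 3.2 without a proof because, as you note, once $g$ is written as $\log(C_{\epsilon_0}+\lambda_1)$ (valid near the perturbed configuration where the ordering is strict and $\lambda_1>0$), both $f$ and $g$ are smooth functions of the $\lambda_i$ and the formulas follow by elementary one-variable differentiation.
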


Now, near the maximum point $x_0$, we consider the function $\tilde{h}(x) \coloneqq \sum_{i =1}^n \arctan (\tilde{\lambda}_i)$, which is defined only near $x_0$. By the following Lemma, one can see that the perturbed function $\tilde{h}(x)$ is close to our original function $h(x) = \sum_{i =1}^n \arctan (\lambda_i)$. The proof is a straightforward computation.

\hypertarget{L:3.3}{\begin{flemma}}
\begin{align*}
\label{eq:3.11}
\frac{\partial F}{\partial \Lambda^k_i} &= \left (  \left ( \Id + \Lambda^2   \right )^{-1}   \right )^i_k. \tag{3.11}
\end{align*}
\end{flemma}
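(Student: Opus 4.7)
The plan is to verify the identity first at diagonal matrices with distinct eigenvalues, where Lemma 2.2 applies directly, and then extend to all $\Lambda$ by equivariance under conjugation. Write $G(\Lambda)$ for the matrix with entries $G(\Lambda)^i_k \coloneqq \partial F/\partial \Lambda^k_i$. Since $F$ depends only on the eigenvalues of $\Lambda$ it is invariant under conjugation, $F(P\Lambda P^{-1}) = F(\Lambda)$, and a short chain-rule calculation turns this into the equivariance $G(P\Lambda P^{-1}) = P\, G(\Lambda)\, P^{-1}$. The right-hand side $(\Id+\Lambda^2)^{-1}$ obviously transforms the same way. Since any Hermitian $\Lambda$ is unitarily diagonalizable and matrices with distinct eigenvalues are dense, it suffices to verify the identity on a diagonal $\Lambda$ with distinct eigenvalues.

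At such a diagonal $\Lambda$ with eigenvalues $\lambda_1, \ldots, \lambda_n$, apply Lemma 2.2, equation (2.3), to the symmetric function $f(\lambda_1, \ldots, \lambda_n) = \sum_i \arctan(\lambda_i)$: this yields $G(\Lambda)^i_k = \delta_{ik} f_i = \delta_{ik}/(1+\lambda_i^2)$. Meanwhile, at a diagonal $\Lambda$ the matrix $\Id + \Lambda^2$ is itself diagonal with entries $1+\lambda_i^2$, so $(\Id+\Lambda^2)^{-1}$ has $(i,k)$-entry exactly $\delta_{ik}/(1+\lambda_i^2)$. The two sides agree, which closes the argument.

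This lemma is essentially a computational identity, so there is no real analytic obstacle; the only delicate point is the bookkeeping in the covariance step (in particular, getting the transpose/inverse placement right in $G(P\Lambda P^{-1}) = PG(\Lambda)P^{-1}$). If one wants to avoid the covariance-plus-density argument entirely, there is a clean direct alternative: apply Jacobi's formula $d\log\det M = \tr(M^{-1}\,dM)$ to each factor in the definition of $F$, use cyclicity of the trace (and the fact that $\Lambda$ commutes with polynomials in $\Lambda$) to reduce everything to $dF = \tr\bigl((\Id+\sqrt{-1}\Lambda)^{-1}\, d\Lambda\bigr) + \sqrt{-1}\,\tr\bigl((\Id+\Lambda^2)^{-1}\Lambda\, d\Lambda\bigr)$, and then collapse the two terms via the algebraic identity $(\Id+\sqrt{-1}\Lambda)^{-1} + \sqrt{-1}\,(\Id+\Lambda^2)^{-1}\Lambda = (\Id+\Lambda^2)^{-1}$, which itself is verified by multiplying through by $\Id+\Lambda^2 = (\Id-\sqrt{-1}\Lambda)(\Id+\sqrt{-1}\Lambda)$. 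This recovers exactly the claimed formula $\partial F/\partial \Lambda^k_i = \bigl((\Id+\Lambda^2)^{-1}\bigr)^i_k$.
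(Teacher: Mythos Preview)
Your proposal is correct. The paper itself offers no proof beyond the sentence ``The proof is a straightforward computation,'' so there is nothing substantive to compare against; both of your routes---the equivariance-plus-density argument via Lemma~2.2, and the direct Jacobi's-formula computation---are valid and each is more detailed than what the paper provides.
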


Instead of the original equation $F(\Lambda) = \Theta_\omega \left ( \ubar{\chi} + \sqrt{-1} \partial \bar{\partial} u \right ) = h(x)$, we consider the perturbed one $F(\tilde{\Lambda}) = \tilde{h}(x)$ because we want to make sure that near the maximum point, our eigenvalues are differentiable. To do the $C^2$ estimate, we have to use the perturbed equation to replace the fourth derivative. We have the following Lemmas.

\hypertarget{L:3.4}{\begin{flemma}}
Let $F( \tilde{\Lambda} )= \tilde{h}(x)$, then near $x_0$, we have
\begin{align*}
\label{eq:3.12}
 \frac{\partial \tilde{h}}{\partial \bar{z}_k}  &=   \frac{\partial F}{\partial \Lambda^j_i} \left (  \tilde{\Lambda}   \right ) \frac{\partial \tilde{\Lambda}^j_i }{\partial \bar{z}_k}  = \frac{\partial F}{\partial \Lambda^j_i} \left (  \tilde{\Lambda}   \right ) \frac{\partial \Lambda^j_i }{\partial \bar{z}_k}, \tag{3.12}\\
 \label{eq:3.13}
 \frac{\partial^2 \tilde{h}}{\partial z_k \partial \bar{z}_k} &=  \frac{\partial^2 F}{\partial \Lambda^j_i \partial \Lambda_r^s} \left ( \tilde{\Lambda} \right ) \frac{\partial \tilde{\Lambda}_i^j }{\partial \bar{z}_k} \frac{\partial \tilde{\Lambda}_r^s}{\partial z_k} + \frac{\partial F}{\partial \Lambda_i^j } \left ( \tilde{\Lambda} \right ) \frac{\partial^2 \tilde{\Lambda}_i^j }{\partial z_k \partial \bar{z}_k} \tag{3.13} \\
 &=  \frac{\partial^2 F}{\partial \Lambda^j_i \partial \Lambda_r^s} \left ( \tilde{\Lambda} \right )  \frac{\partial \Lambda_i^j }{\partial \bar{z}_k} \frac{\partial \Lambda_r^s}{\partial z_k} + \frac{\partial F}{\partial \Lambda_i^j } \left ( \tilde{\Lambda} \right ) \frac{\partial^2 \Lambda_i^j }{\partial z_k \partial \bar{z}_k}. 
\end{align*}
\end{flemma}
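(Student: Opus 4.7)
The plan is to prove both identities by straightforward application of the chain rule, using crucially that $\tilde{\Lambda}$ differs from $\Lambda$ only by a constant matrix. Recall that by construction $\tilde{\Lambda} = \Lambda - B$, where $B$ is a constant diagonal matrix (with real entries depending only on $\epsilon_0$ and $\kappa$) and its eigenvalues at $x_0$ are chosen so that all eigenvalues $\tilde{\lambda}_1 > \tilde{\lambda}_2 > \dots > \tilde{\lambda}_n$ of $\tilde{\Lambda}$ are distinct. By continuity, they remain distinct in a neighborhood of $x_0$, so the symmetric function $F(\tilde{\Lambda})$ is a smooth function of $x$ near $x_0$, and $\tilde{h}(x) \coloneqq \sum_i \arctan(\tilde{\lambda}_i(x)) = F(\tilde{\Lambda}(x))$ is smooth there.

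For the first identity, I would simply differentiate $F(\tilde{\Lambda}(x)) = \tilde{h}(x)$ with respect to $\bar{z}_k$. The chain rule gives
\begin{align*}
\frac{\partial \tilde{h}}{\partial \bar{z}_k} = \frac{\partial F}{\partial \Lambda^j_i}(\tilde{\Lambda}) \, \frac{\partial \tilde{\Lambda}^j_i}{\partial \bar{z}_k}.
\end{align*}
Because $B$ has constant entries, $\partial_{\bar{z}_k} \tilde{\Lambda}^j_i = \partial_{\bar{z}_k} \Lambda^j_i$, which gives the second equality in (3.12).

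For the second identity, I would differentiate (3.12) with respect to $z_k$. Applying the chain rule and the product rule,
\begin{align*}
\frac{\partial^2 \tilde{h}}{\partial z_k \partial \bar{z}_k} = \frac{\partial^2 F}{\partial \Lambda^j_i \partial \Lambda^s_r}(\tilde{\Lambda}) \, \frac{\partial \tilde{\Lambda}^s_r}{\partial z_k} \frac{\partial \tilde{\Lambda}^j_i}{\partial \bar{z}_k} + \frac{\partial F}{\partial \Lambda^j_i}(\tilde{\Lambda}) \, \frac{\partial^2 \tilde{\Lambda}^j_i}{\partial z_k \partial \bar{z}_k}.
\end{align*}
Again using $\tilde{\Lambda} = \Lambda - B$ with $B$ constant, every derivative of $\tilde{\Lambda}$ equals the corresponding derivative of $\Lambda$, yielding the final equality in (3.13).

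There is no substantive obstacle here; the only point requiring care is justifying the smoothness of $F(\tilde{\Lambda})$ as a function of $x$, which follows from our perturbation choice of $B$ ensuring distinct eigenvalues (so that the symmetric-function representation of $F$ is smooth by Lemma 2.2). Everything else is bookkeeping with the chain rule and the fact that $\partial \tilde{\Lambda} = \partial \Lambda$.
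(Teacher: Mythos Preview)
Your proposal is correct and matches the paper's approach: the paper does not even supply a proof for this lemma, treating it as an immediate consequence of the chain rule together with the fact that $\tilde{\Lambda}=\Lambda-B$ with $B$ constant, exactly as you argue. Your additional remark on smoothness via the distinct-eigenvalue perturbation is appropriate and consistent with the setup preceding the lemma.
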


\hypertarget{L:3.5}{\begin{flemma}}
Near the maximum point, we can rewrite the term $\frac{\partial^2 \tilde{h}}{\partial z_k \partial \bar{z}_k}$ as
\begin{align*}
\label{eq:3.14}
\frac{\partial^2 \tilde{h}}{\partial z_k \partial \bar{z}_k} =  f_{ir} \left ( \tilde{\lambda}  \right) \frac{\partial \Lambda_i^i }{ \partial \bar{z}_k} \frac{\partial \Lambda_r^r }{\partial z_k} + \sum_{i \neq j} \frac{f_i - f_j}{\tilde{\lambda}_i - \tilde{\lambda}_j} \left (  \tilde{\lambda}  \right ) \frac{\partial \Lambda_i^j}{\partial \bar{z}_k} \frac{\partial \Lambda_j^i }{\partial z_k} + f_i \left ( \tilde{\lambda} \right ) \frac{\partial^2 \Lambda_i^i }{\partial z_k \partial \bar{z}_k}. \tag{3.14}
\end{align*}
\end{flemma}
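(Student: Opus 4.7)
The plan is to read off (3.14) from the abstract formula (3.13) of Lemma 3.4 by evaluating the intrinsic derivatives of $F$ at the diagonal matrix $\tilde{\Lambda}$ using Lemma 2.2. At $x_0$, Lemma 3.1 gives $\omega_{i\bar{j}} = \delta_{ij}$ and $\hat{\chi}_{i\bar{j}} = \lambda_i \delta_{ij}$, so $\Lambda^j_i$ is diagonal there; since $B$ is a constant diagonal matrix, $\tilde{\Lambda} = \Lambda - B$ is diagonal as well at $x_0$, with eigenvalues $\tilde{\lambda}_i$ made pairwise distinct by construction. Because $\omega$ is the identity at $x_0$, there is no distinction between the mixed-type entries $\Lambda^j_i$ and the $(1,1)$-entries $\Lambda_{i\bar{j}}$ to which Lemma 2.2 refers, so formulas (2.3)--(2.4) apply verbatim.

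The first step is to compute the first and second derivatives of $F$ at $\tilde{\Lambda}$. Lemma 2.2 gives
\[
\frac{\partial F}{\partial \Lambda^j_i}(\tilde{\Lambda}) = \delta_{ij}\, f_i(\tilde{\lambda}), \qquad \frac{\partial^2 F}{\partial \Lambda^j_i\, \partial \Lambda^s_r}(\tilde{\Lambda}) = f_{ir}(\tilde{\lambda})\, \delta_{ij}\delta_{rs} + \frac{f_i - f_j}{\tilde{\lambda}_i - \tilde{\lambda}_j}(\tilde{\lambda})\,(1 - \delta_{ij})\, \delta_{is}\delta_{jr}.
\]

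The second step is to substitute these expressions into the right-hand side of (3.13), noting that because $B$ is constant we have $\partial \tilde{\Lambda}/\partial z = \partial \Lambda / \partial z$ and similarly for the mixed second derivatives, so the $\Lambda$-derivatives appearing on the right of (3.14) may be written without tildes. The first-order piece $(\partial F/\partial \Lambda^j_i)(\tilde{\Lambda})\cdot \partial^2 \Lambda^j_i / \partial z_k \partial \bar{z}_k$ collapses under $\delta_{ij}$ to $f_i(\tilde{\lambda})\, \partial^2 \Lambda^i_i / \partial z_k \partial \bar{z}_k$, which is the last summand of (3.14). In the Hessian contraction against $(\partial \Lambda^j_i/\partial \bar{z}_k)(\partial \Lambda^s_r/\partial z_k)$, the diagonal component $f_{ir}\delta_{ij}\delta_{rs}$ produces $f_{ir}(\tilde{\lambda})\, (\partial \Lambda^i_i/\partial \bar{z}_k)\,(\partial \Lambda^r_r/\partial z_k)$, while the off-diagonal divided-difference component forces $s=i$ and $r=j$ with $i\neq j$, producing $\sum_{i\neq j} \tfrac{f_i - f_j}{\tilde{\lambda}_i - \tilde{\lambda}_j}(\tilde{\lambda})\, (\partial \Lambda^j_i/\partial \bar{z}_k)\,(\partial \Lambda^i_j/\partial z_k)$. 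Summing the three contributions reproduces (3.14) exactly.

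There is no genuine obstacle beyond careful bookkeeping: the lemma is a formal consequence of Lemmas 2.2 and 3.4. The only point requiring care — and indeed the entire reason the auxiliary perturbation $B$ was introduced at the start of the section — is that the divided-difference coefficient $(f_i - f_j)/(\tilde{\lambda}_i - \tilde{\lambda}_j)$ must be well-defined, which is guaranteed precisely because the construction of $B$ forces $\tilde{\lambda}_i \neq \tilde{\lambda}_j$ for all $i \neq j$ at $x_0$.
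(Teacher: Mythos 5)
Your proposal is correct and takes essentially the same route as the paper: substitute the formulas of Lemma 2.2 for $\partial F/\partial\Lambda$ and $\partial^2 F/\partial\Lambda^2$ at the diagonal matrix $\tilde{\Lambda}$ into (3.13), use that $B$ is constant to replace $\tilde{\Lambda}$-derivatives by $\Lambda$-derivatives, and contract the Kronecker deltas. The remarks about $\omega$ being the identity at $x_0$ (so Lemma 2.2's $(1,1)$-index conventions apply to the mixed endomorphism entries) and about the perturbation $B$ guaranteeing distinct eigenvalues are the same justifications the paper relies on implicitly.
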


\begin{proof}
By equations (\ref{eq:2.3}) and (\ref{eq:2.4}), since $\tilde{\Lambda}$ is a diagonal matrix, we get

\begin{align*}
\frac{\partial^2 \tilde{h}}{\partial z_k \partial \bar{z}_k} = \left ( f_{ir} \left ( \tilde{\Lambda}  \right )  \delta_{ij} \delta_{rs} + \frac{f_i - f_j}{\tilde{\lambda}_i - \tilde{\lambda}_j} \left (  \tilde{\Lambda}  \right ) (1 - \delta_{ij}) \delta_{is} \delta_{jr}  \right )\frac{\partial \tilde{\Lambda}_i^j}{\partial \bar{z}_k} \frac{\partial \tilde{\Lambda}_s^r }{\partial z_k} + \delta_{ij} f_i \left (  \tilde{\Lambda}  \right ) \frac{\partial^2 \tilde{\Lambda}_j^i }{\partial z_k \partial \bar{z}_k}.
\end{align*}Simplifying it, we get
\begin{align*}
 \frac{\partial^2 \tilde{h}}{\partial z_k \partial \bar{z}_k} &=  f_{ir} \left ( \tilde{\lambda}  \right) \frac{\partial \Lambda_i^i }{ \partial \bar{z}_k} \frac{\partial \Lambda_r^r }{\partial z_k} + \sum_{i \neq j} \frac{f_i - f_j}{\tilde{\lambda}_i - \tilde{\lambda}_j} \left (  \tilde{\lambda}  \right ) \frac{\partial \Lambda_i^j}{\partial \bar{z}_k} \frac{\partial \Lambda_j^i }{\partial z_k} + f_i \left ( \tilde{\lambda} \right ) \frac{\partial^2 \Lambda_i^i }{\partial z_k \partial \bar{z}_k}. 
 \end{align*}

\end{proof}

\hypertarget{L:3.6}{\begin{flemma}}The first and second derivatives of $\Lambda$ are
\begin{align*}
\label{eq:3.15}
\frac{\partial \Lambda_i^j }{\partial \bar{z}_k} &=  \tensor[]{\omega}{^{j \bar{p}}_{, \bar{k}}} \tensor[]{\hat{\chi}}{_{i \bar{p}}} +  \tensor[]{\omega}{^{j \bar{p}}} \tensor[]{\hat{\chi}}{_{i \bar{p}, \bar{k}}} =  - \omega^{j \bar{b}} \omega_{a \bar{b}, \bar{k}} \omega^{a \bar{p}} \tensor[]{\hat{\chi}}{_{i \bar{p}}} +  \tensor[]{\omega}{^{j \bar{p}}} \tensor[]{\hat{\chi}}{_{i \bar{p}, \bar{k}}}, \tag{3.15} \\ 
\label{eq:3.16}
\frac{\partial^2 \Lambda_i^j }{\partial z_l \partial \bar{z}_k} &= \tensor[]{\omega}{^{j \bar{p}}_{, \bar{k}l}} \tensor[]{\hat{\chi}}{_{i \bar{p}}} + \tensor[]{\omega}{^{j \bar{p}}_{, \bar{k}}} \tensor[]{\hat{\chi}}{_{i \bar{p}, l}} +  \tensor[]{\omega}{^{j \bar{p}}_{, l}} \tensor[]{\hat{\chi}}{_{i \bar{p}, \bar{k}}} + \tensor[]{\omega}{^{j \bar{p}}} \tensor[]{\hat{\chi}}{_{i \bar{p}, \bar{k}l}} \tag{3.16}  \\
&=  \omega^{j \bar{d}} \omega_{c \bar{d}, l} \omega^{c \bar{b}} \omega_{a \bar{b}, \bar{k}} \omega^{a \bar{p}} \hat{\chi}_{i \bar{p}} - \omega^{j \bar{b}} \omega_{a \bar{b}, \bar{k} l} \omega^{a \bar{p}} \hat{\chi}_{i \bar{p}} + \omega^{j \bar{b}} \omega_{a \bar{b}, \bar{k}} \omega^{a \bar{d}} \omega_{c \bar{d}, l} \omega^{c \bar{p}} \hat{\chi}_{i \bar{p}} \\
&\quad - \omega^{j \bar{b}} \omega_{a \bar{b}, \bar{k}} \omega^{a \bar{p}} \hat{\chi}_{i \bar{p}, l} - \omega^{j \bar{b}} \omega_{a \bar{b}, l} \omega^{a \bar{p}} \hat{\chi}_{i \bar{p}, \bar{k}} + \omega^{j \bar{p}} \hat{\chi}_{i \bar{p}, \bar{k} l}, 
\end{align*}where we denote $\hat{\chi}_{i \bar{j}} = \ubar{\chi}_{i \bar{j}} + u_{i \bar{j}}$.
\end{flemma}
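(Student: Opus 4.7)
The plan is to verify both formulas by direct differentiation using the Leibniz rule, relying on one basic identity for the derivative of the inverse metric. Starting from $\omega^{j\bar p}\omega_{a\bar p}=\delta^j_a$ and differentiating in $\bar z_k$, one obtains
\[
\omega^{j\bar p}_{,\bar k}=-\omega^{j\bar b}\,\omega_{a\bar b,\bar k}\,\omega^{a\bar p},
\]
and analogously for a $z_l$-derivative. This will be the only nontrivial input; everything else is the product rule applied to $\Lambda_i^j=\omega^{j\bar p}\hat\chi_{i\bar p}$.

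For the first equation, I would simply apply $\partial/\partial\bar z_k$ to the product $\omega^{j\bar p}\hat\chi_{i\bar p}$, yielding $\omega^{j\bar p}_{,\bar k}\hat\chi_{i\bar p}+\omega^{j\bar p}\hat\chi_{i\bar p,\bar k}$, and then substitute the identity above to rewrite the first summand as $-\omega^{j\bar b}\omega_{a\bar b,\bar k}\omega^{a\bar p}\hat\chi_{i\bar p}$. This is exactly (3.15).

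For the second equation, I would differentiate (3.15) in $z_l$. The term $\omega^{j\bar p}\hat\chi_{i\bar p,\bar k}$ produces, by Leibniz, the contributions $\omega^{j\bar p}_{,l}\hat\chi_{i\bar p,\bar k}+\omega^{j\bar p}\hat\chi_{i\bar p,\bar kl}$; after substituting the inverse-metric identity in $z_l$, the first of these becomes $-\omega^{j\bar b}\omega_{a\bar b,l}\omega^{a\bar p}\hat\chi_{i\bar p,\bar k}$, matching the fifth and sixth terms of (3.16). The term $-\omega^{j\bar b}\omega_{a\bar b,\bar k}\omega^{a\bar p}\hat\chi_{i\bar p}$ splits, by Leibniz, into four pieces: one where the derivative hits $\omega^{j\bar b}$, one where it hits $\omega_{a\bar b,\bar k}$, one where it hits $\omega^{a\bar p}$, and one where it hits $\hat\chi_{i\bar p}$. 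Using the inverse-metric identity in $z_l$ for the two inverse-metric factors, these become the first, second, third, and fourth terms of (3.16) respectively (with the opposite sign in the pure second-derivative piece coming from the outer minus sign in (3.15)).

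There is no serious obstacle here; the statement is essentially a bookkeeping exercise in the non-Kähler setting, where all torsion-type terms are retained because one is not allowed to impose $d\omega=0$. The only point that warrants care is index management—making sure the dummy indices $a,b,c,d,p$ in the cubic-in-$\omega^{-1}$ terms are consistent—so I would fix a clean convention (e.g.\ always insert $\omega^{j\bar b}\omega_{a\bar b,\star}\omega^{a\bar p}$ for a single derivative of $\omega^{j\bar p}$) and then match terms with (3.16). Since no global analysis is used, the lemma is local and the identities hold in any holomorphic coordinate, in particular in the coordinate of Lemma 3.1 that will be used subsequently.
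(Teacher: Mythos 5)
Your proposal is correct and follows exactly the route the paper intends: Lemma~3.6 is a routine Leibniz-rule computation for $\Lambda_i^j=\omega^{j\bar p}\hat\chi_{i\bar p}$, with the single nontrivial input being the identity $\omega^{j\bar p}_{,\bullet}=-\omega^{j\bar b}\omega_{a\bar b,\bullet}\omega^{a\bar p}$ obtained by differentiating $\omega^{j\bar p}\omega_{a\bar p}=\delta^j_a$. The paper does not even write out a proof for this lemma, treating it as a direct computation; your term-by-term bookkeeping (including the observation that the sign of the $\omega_{a\bar b,\bar k l}$ term is inherited from the outer minus, while the other pieces pick up a sign flip from the inverse-metric identity) matches the displayed formulas in~(3.15) and~(3.16) precisely.
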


By \hyperlink{L:3.3}{Lemma 3.3} and \hyperlink{L:3.8}{Lemma 3.8}, at the maximum point,  direct computations give us the following lemma.

\hypertarget{L:3.7}{\begin{flemma}}At the maximum point $x_0$, we have
\begin{align*}
\label{eq:3.17}
\frac{\partial \Lambda_i^j}{\partial \bar{z}_k} (x_0) &= - \lambda_i \omega_{i \bar{j}, \bar{k}}  +  \tensor[]{\hat{\chi}}{_{i \bar{j}, \bar{k}}} = - \tilde{\lambda}_i \omega_{i \bar{j}, \bar{k}} - B_{ii} \omega_{i \bar{j}, \bar{k}} +   \tensor[]{\hat{\chi}}{_{i \bar{j}, \bar{k}}}, \tag{3.17} \\ 
\label{eq:3.18}
\frac{\partial^2 \Lambda_i^j}{\partial z_l \partial \bar{z}_k} (x_0) &= \lambda_i \omega_{i \bar{a}, \bar{k}} \omega_{a \bar{j}, l} + \lambda_i \omega_{a \bar{j}, \bar{k}} \omega_{i \bar{a}, l} - \lambda_i \omega_{i \bar{j}, \bar{k} l}  - \omega_{a \bar{j}, \bar{k}}  \tensor[]{\hat{\chi}}{_{i \bar{a}, l}} - \omega_{a \bar{j}, l}  \tensor[]{\hat{\chi}}{_{i \bar{a}, \bar{k}}}  +  \tensor[]{\hat{\chi}}{_{i \bar{j}, \bar{k}l}} \tag{3.18} \\ 
&= \tilde{\lambda}_i \omega_{i \bar{a}, \bar{k}} \omega_{a \bar{j}, l} + \tilde{\lambda}_i \omega_{a \bar{j}, \bar{k}} \omega_{i \bar{a}, l} - \tilde{\lambda}_i \omega_{i \bar{j}, \bar{k} l}  + B_{ii} \omega_{i \bar{a}, \bar{k}} \omega_{a \bar{j}, l} + B_{ii} \omega_{a \bar{j}, \bar{k}} \omega_{i \bar{a}, l}  \\ 
&\quad \quad \quad \quad  \quad \quad \quad \quad \quad \quad \quad \quad    - B_{ii} \omega_{i \bar{j}, \bar{k} l}  - \omega_{a \bar{j}, \bar{k}}  \tensor[]{\hat{\chi}}{_{i \bar{a}, l}} - \omega_{a \bar{j}, l}  \tensor[]{\hat{\chi}}{_{i \bar{a}, \bar{k}}}  +  \tensor[]{\hat{\chi}}{_{i \bar{j}, \bar{k}l}}.   
\end{align*}
\end{flemma}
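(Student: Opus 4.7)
The proof plan is purely computational: Lemma 3.7 is just the evaluation of the general expressions in Lemma 3.6 at the distinguished point $x_0$ in the Guan--Li / Streets--Tian coordinates supplied by Lemma 3.1. The plan is therefore to substitute
\[
\omega^{i\bar j}(x_0)=\delta_{ij},\qquad \omega_{i\bar j}(x_0)=\delta_{ij},\qquad \hat\chi_{i\bar p}(x_0)=\lambda_i\delta_{ip}
\]
into each formula of Lemma 3.6, collapse the resulting Kronecker contractions, and then pass from $\lambda_i$ to $\tilde\lambda_i$ using $\tilde\Lambda=\Lambda-B$ with $B$ the diagonal perturbation chosen before (3.5), so that $\lambda_i=\tilde\lambda_i+B_{ii}$.

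For the first identity (3.17) I would start from
\[
\frac{\partial \Lambda_i^j}{\partial\bar z_k}=-\omega^{j\bar b}\omega_{a\bar b,\bar k}\omega^{a\bar p}\hat\chi_{i\bar p}+\omega^{j\bar p}\hat\chi_{i\bar p,\bar k}.
\]
At $x_0$ the two factors of $\omega^{-1}$ in the first term force $b=j$ and $a=p$, and the diagonal form of $\hat\chi$ then forces $p=i$, contributing the factor $\lambda_i$ and reducing the first term to $-\lambda_i\omega_{i\bar j,\bar k}$. The second term reduces immediately to $\hat\chi_{i\bar j,\bar k}$. Writing $\lambda_i=\tilde\lambda_i+B_{ii}$ yields the second line of (3.17).

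For the second derivative (3.18) I would carry out the same substitution termwise in the six-term expansion provided by Lemma 3.6. Each term contains two or three factors of $\omega^{-1}$, exactly one factor of $\hat\chi$ or one of its derivatives, and one or two factors of derivatives of $\omega$. At $x_0$ every $\omega^{-1}$ factor collapses via a Kronecker identification of indices; the diagonal form of $\hat\chi$ then contracts the surviving free index with $i$ (producing a factor $\lambda_i$ in the four terms where $\hat\chi$ itself appears, rather than a $\hat\chi$-derivative). After these identifications the six terms reproduce, in order, the six terms on the right-hand side of (3.18), and substituting $\lambda_i=\tilde\lambda_i+B_{ii}$ gives the second expression.

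There is no serious obstacle here: the proof is index bookkeeping. The only point that needs care is to keep the pairings in the longer second-derivative formula consistent (in particular to see that the two $\omega_{*,\bar k}\omega_{*,l}$-type terms really do emerge with the same coefficient $\lambda_i$ but with the $\bar k$ and $l$ attached to the opposite $\omega$-factor in each). It is worth noting that the third Lemma 3.1 condition $\omega_{i\bar i,j}(x_0)=0$ is \emph{not} used in proving Lemma 3.7 itself; it is reserved for the later step in which these expressions are substituted into $\mathcal L_F(U)$, where it will eliminate a number of otherwise problematic cross terms between the $\omega^{-1}$-derivative factors and the $\hat\chi$-derivative factors.
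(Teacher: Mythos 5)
Your proposal is correct and is exactly the ``direct computation'' the paper has in mind: evaluate the two formulas of Lemma~3.6 at the Guan--Li/Streets--Tian coordinates of Lemma~3.1, collapse the Kronecker contractions (one can check termwise that the six terms of the second-derivative formula reproduce the six terms of~(3.18), with the two $\omega_{*,\bar k}\omega_{*,l}$ products appearing exactly as you describe), and then rewrite $\lambda_i=\tilde\lambda_i+B_{ii}$. Your observation that the normalization $\omega_{i\bar i,j}(x_0)=0$ is not yet needed here is also accurate; it is deferred to the computations for Lemma~3.8 and~(3.21).
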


\hypertarget{L:3.8}{\begin{flemma}}
By Lemmas \hyperlink{L:2.3}{2.3}, \hyperlink{L:3.1}{3.1}, \hyperlink{L:3.2}{3.2}, \hyperlink{L:3.5}{3.5}, and \hyperlink{L:3.6}{3.6}, at the maximum point, we get
\begin{align*}
\label{eq:3.19}
\frac{\partial^2 \tilde{h}}{\partial z_k \partial \bar{z}_k} 
&= - \sum_{i+j < 2n} \frac{\tilde{\lambda}_i + \tilde{\lambda}_j }{(1 + \tilde{\lambda}_i^2)(1 + \tilde{\lambda}_j^2)} \left | \left( \frac{1 - \tilde{\lambda}_i \tilde{\lambda}_j }{ \tilde{\lambda}_i + \tilde{\lambda}_j} - B_{jj} \right ) \omega_{j \bar{i}, k}   +  \tensor[]{\hat{\chi}}{_{j \bar{i},  {k}}}   \right |^2  - \frac{2 \tilde{\lambda}_n \left |  \tensor[]{\hat{\chi}}{_{n \bar{n}, \bar{k}}}   \right |^2 }{(1 + \tilde{\lambda}_n^2)^2} \tag{3.19} \\  
&\quad+ \frac{1}{1+ \tilde{\lambda}_i^2} \tensor[]{\hat{\chi}}{_{i \bar{i},   \bar{k} k}}    - \frac{\tilde{\lambda}_i}{1+ \tilde{\lambda}_i^2} \omega_{i \bar{i}, \bar{k} k}     - B_{ii} \frac{1}{1 + \tilde{\lambda}_i^2} \omega_{i \bar{i}, \bar{k} k}   +   \sum_{i+j < 2n} \frac{ 1 }{ \lambda_i + \lambda_j } \left | \omega_{j \bar{i}, k}  \right |^2. 
\end{align*}
\end{flemma}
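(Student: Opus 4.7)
The strategy is a direct (if somewhat lengthy) computation: start from Lemma~3.5's chain-rule expression for $\partial^2\tilde{h}/\partial z_k\partial\bar{z}_k$, substitute the explicit eigenvalue derivatives from Lemma~3.2 and the $\Lambda$-derivatives from Lemmas~3.6 and~3.7, simplify using the special coordinates of Lemma~3.1, then perform an algebraic ``completing the square'' to produce the target form. First I would substitute $f_i = 1/(1+\tilde{\lambda}_i^2)$ and $f_{ii} = -2\tilde{\lambda}_i/(1+\tilde{\lambda}_i^2)^2$ into Lemma~3.5, and observe that $f_{ii}$ can be written as $-(\tilde{\lambda}_i + \tilde{\lambda}_i)/(1+\tilde{\lambda}_i^2)^2$, so that the diagonal ($i=j$) and off-diagonal ($i\neq j$) quadratic contributions coalesce into the single double sum $-\sum_{i,j} \frac{\tilde{\lambda}_i + \tilde{\lambda}_j}{(1+\tilde{\lambda}_i^2)(1+\tilde{\lambda}_j^2)}|\partial_{\bar{k}}\Lambda_i^j|^2$, where Hermiticity at $x_0$ was used to write $(\partial_k \Lambda_j^i)(\partial_{\bar{k}} \Lambda_i^j) = |\partial_{\bar{k}}\Lambda_i^j|^2$.

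Next I would insert the first-order formula of Lemma~3.7 at $x_0$. By Lemma~3.1 we have $\omega_{i\bar{i},k}(x_0) = \omega_{i\bar{i},\bar{k}}(x_0)=0$, so for $i=j$ the expression reduces to $\partial_{\bar{k}}\Lambda_i^i(x_0) = \hat{\chi}_{i\bar{i},\bar{k}}$. I would then separate the $(i,j)=(n,n)$ contribution, which gives the exact term $-2\tilde{\lambda}_n|\hat{\chi}_{n\bar{n},\bar{k}}|^2/(1+\tilde{\lambda}_n^2)^2$ appearing in (3.19). For all remaining indices $(i,j)$ with $i+j<2n$, the supercritical phase assumption combined with the perturbation choice of $B$ (together with Lemma~2.3(i)) guarantees $\tilde{\lambda}_i + \tilde{\lambda}_j > 0$, so all denominators appearing in the claimed formula are well defined.

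The core step is the algebraic rearrangement of the $i+j<2n$ sum. The identity
\begin{equation*}
(1+\tilde{\lambda}_i^2)(1+\tilde{\lambda}_j^2) = (\tilde{\lambda}_i + \tilde{\lambda}_j)^2 + (1-\tilde{\lambda}_i\tilde{\lambda}_j)^2
\end{equation*}
lets us perform a Pythagorean-type completion of the square. Using the conjugation relations $\omega_{j\bar{i},k} = \overline{\omega_{i\bar{j},\bar{k}}}$ and $\hat{\chi}_{j\bar{i},k} = \overline{\hat{\chi}_{i\bar{j},\bar{k}}}$ (so that absolute values are unchanged), each summand $-\tfrac{\tilde{\lambda}_i + \tilde{\lambda}_j}{(1+\tilde{\lambda}_i^2)(1+\tilde{\lambda}_j^2)}\bigl|{-}(\tilde{\lambda}_i + B_{ii})\omega_{i\bar{j},\bar{k}} + \hat{\chi}_{i\bar{j},\bar{k}}\bigr|^2$ can be regrouped as
\begin{equation*}
-\frac{\tilde{\lambda}_i + \tilde{\lambda}_j}{(1+\tilde{\lambda}_i^2)(1+\tilde{\lambda}_j^2)}\left|\left(\frac{1-\tilde{\lambda}_i\tilde{\lambda}_j}{\tilde{\lambda}_i + \tilde{\lambda}_j} - B_{jj}\right)\omega_{j\bar{i},k} + \hat{\chi}_{j\bar{i},k}\right|^2 + \frac{1}{\lambda_i + \lambda_j}|\omega_{j\bar{i},k}|^2,
\end{equation*}
where the last piece gathers the ``excess'' terms into the correction sum appearing at the end of (3.19). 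The transition from $\tilde{\lambda}$ to the unperturbed $\lambda$ in this residual is the source of the $\lambda_i + \lambda_j$ denominator in the final summand.

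Finally, I would deal with the second-order contribution $\sum_i f_i \partial^2_{k\bar{k}} \Lambda_i^i$. Substituting Lemma~3.6 and applying Lemma~3.1 once more, the products of first-derivative Christoffel-type factors all vanish at $x_0$ because they contain a factor $\omega_{i\bar{i},k}(x_0)=0$; what survives is $\hat{\chi}_{i\bar{i},\bar{k}k} - (\tilde{\lambda}_i + B_{ii})\omega_{i\bar{i},\bar{k}k}$. Multiplying by $f_i = 1/(1+\tilde{\lambda}_i^2)$ recovers the three middle summands $(1+\tilde{\lambda}_i^2)^{-1}\hat{\chi}_{i\bar{i},\bar{k}k}$, $-\tilde{\lambda}_i(1+\tilde{\lambda}_i^2)^{-1}\omega_{i\bar{i},\bar{k}k}$ and $-B_{ii}(1+\tilde{\lambda}_i^2)^{-1}\omega_{i\bar{i},\bar{k}k}$ in (3.19). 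The main technical obstacle is the completing-the-square in the third paragraph: one must track the perturbation parameters $B_{ii}, B_{jj}$ against the original eigenvalues $\lambda_i$ simultaneously with the Pythagorean identity, and verify that the ``residual'' produced by the rearrangement is precisely the correction term $\sum_{i+j<2n}(\lambda_i + \lambda_j)^{-1}|\omega_{j\bar{i},k}|^2$ rather than some more complicated remainder.
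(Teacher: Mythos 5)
Your overall outline follows the paper's route (Lemma~3.5 for the chain rule, Lemmas~3.6/3.7 for the $\Lambda$-derivatives, Lemma~3.2 for $f_i$, $f_{ij}$, Lemma~3.1 for the coordinate, Lemma~2.3 for positivity of $\tilde{\lambda}_i+\tilde{\lambda}_j$), but two of your intermediate claims are false and both falsities happen to conceal exactly the torsion terms that produce the correction summand $\sum_{i+j<2n}(\lambda_i+\lambda_j)^{-1}|\omega_{j\bar{i},k}|^2$.

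First, the statement ``Hermiticity at $x_0$ was used to write $(\partial_k\Lambda_j^i)(\partial_{\bar{k}}\Lambda_i^j)=|\partial_{\bar{k}}\Lambda_i^j|^2$'' does not hold. The matrix $\Lambda_i^j=\omega^{j\bar p}\hat\chi_{i\bar p}$ is Hermitian as a matrix only at the single point $x_0$ where $\omega_{i\bar j}=\delta_{ij}$; its derivatives do not inherit this. From Lemma~3.7 one computes $\partial_{\bar{k}}\Lambda_i^j(x_0)=-\lambda_i\omega_{i\bar j,\bar k}+\hat\chi_{i\bar j,\bar k}$ while $\partial_{k}\Lambda_j^i(x_0)=-\lambda_j\omega_{j\bar i,k}+\hat\chi_{j\bar i,k}$; the conjugate of the former is $-\lambda_i\omega_{j\bar i,k}+\hat\chi_{j\bar i,k}$, so the two factors differ by $(\lambda_j-\lambda_i)\omega_{j\bar i,k}$. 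The paper accordingly keeps the product of the two distinct factors (with $\lambda_i$ and $\lambda_j$ respectively) and the ensuing cross-terms are precisely part of what gets reorganized into the $|\omega_{j\bar i,k}|^2$ correction. Writing the product as a single modulus squared loses these torsion cross-terms.

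Second, in the $f_i\,\partial_k\partial_{\bar k}\Lambda_i^i$ term you claim that ``the products of first-derivative Christoffel-type factors all vanish at $x_0$ because they contain a factor $\omega_{i\bar i,k}(x_0)=0$.'' Lemma~3.1 only gives $\omega_{i\bar{i},j}(x_0)=0$ for equal barred and unbarred indices; the off-diagonal torsions $\omega_{i\bar a,k}$ with $a\neq i$ are not killed. Looking at equation~(3.18) with $i=j$, $l=k$, the surviving first-derivative products are $\lambda_i\sum_{a\neq i}\bigl(\omega_{i\bar a,\bar k}\omega_{a\bar i,k}+\omega_{a\bar i,\bar k}\omega_{i\bar a,k}\bigr)-\sum_{a\neq i}\bigl(\omega_{a\bar i,\bar k}\hat\chi_{i\bar a,k}+\omega_{a\bar i,k}\hat\chi_{i\bar a,\bar k}\bigr)$, and the paper carries these along explicitly (the terms $\lambda_i(|\omega_{j\bar i,k}|^2+|\omega_{i\bar j,k}|^2)$ and $-\omega_{j\bar i,\bar k}\hat\chi_{i\bar j,k}-\omega_{j\bar i,k}\hat\chi_{i\bar j,\bar k}$ in its first displayed line). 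They are essential inputs to the completing-the-square, and the $\sum_{i+j<2n}(\lambda_i+\lambda_j)^{-1}|\omega_{j\bar i,k}|^2$ remainder is not an artifact of the Pythagorean identity alone but arises from combining these torsion products with the cross-terms from the quadratic sum. As written, your derivation has no source for the last summand; attributing it vaguely to ``the excess terms'' of the square completion does not survive scrutiny once the two erroneous vanishing claims are corrected.
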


\begin{proof}By \hyperlink{L:3.5}{Lemma 3.5} and \hyperlink{L:3.6}{Lemma 3.6}, we have
\begin{align*}
\frac{\partial^2 \tilde{h}}{\partial z_k \partial \bar{z}_k} 
&= - \frac{2 \tilde{\lambda}_i}{(1 + \tilde{\lambda}_i^2)^2} \left ( - \lambda_i \omega_{i \bar{i}, \bar{k}}  + \tensor[]{\hat{\chi}}{_{i \bar{i}, \bar{k}}}   \right ) \left ( - \lambda_i \omega_{i \bar{i},  {k}}  + \tensor[]{\hat{\chi}}{_{i \bar{i},  {k}}}   \right ) \\
&\quad- \sum_{i \neq j } \frac{\tilde{\lambda}_i + \tilde{\lambda}_j}{(1 + \tilde{\lambda}_i^2)(1 + \tilde{\lambda}_j^2)} \left (  - \lambda_i \omega_{i \bar{j}, \bar{k}}  + \tensor[]{\hat{\chi}}{_{i \bar{j}, \bar{k}}}  \right ) \left ( - \lambda_j  \omega_{j \bar{i}, k}  + \tensor[]{\hat{\chi}}{_{j \bar{i},  {k}}}   \right ) \\
&\quad+ \frac{1}{1+ \tilde{\lambda}_i^2} \left (  \lambda_i  \left (  \left | \omega_{j \bar{i}, k} \right |^2 + \left | \omega_{i \bar{j}, k}    \right |^2  \right ) - \lambda_i  \omega_{i \bar{i}, \bar{k} k} -  \omega_{j \bar{i}, \bar{k}} \tensor[]{\hat{\chi}}{_{i \bar{j}, k}} - \omega_{j \bar{i},  {k}} \tensor[]{\hat{\chi}}{_{i \bar{j}, \bar{k}}} +   \tensor[]{\hat{\chi}}{_{i \bar{i}, \bar{k}k}}   \right ) \\
&= -   \frac{ \tilde{\lambda}_i + \tilde{\lambda}_j }{(1 + \tilde{\lambda}_i^2)(1 + \tilde{\lambda}_j^2)} \left ( \omega_{j \bar{i}, k} - B_{jj} \omega_{j \bar{i}, k} +  \tensor[]{\hat{\chi}}{_{j \bar{i},  {k}}}   \right ) \left ( \omega_{i \bar{j}, \bar{k}} - B_{ii} \omega_{i \bar{j}, \bar{k}} + \tensor[]{\hat{\chi}}{_{i \bar{j}, \bar{k}}}   \right ) \\ 
&\quad+  2   \frac{ \tilde{\lambda}_i + \tilde{\lambda}_j }{(1 + \tilde{\lambda}_i^2)(1 + \tilde{\lambda}_j^2)} \left | \omega_{j \bar{i}, k}  \right |^2 - \frac{ \tilde{\lambda}_i}{1 + \tilde{\lambda}_i^2} \omega_{i \bar{i}, \bar{k} k} - B_{ii} \frac{1}{1 + \tilde{\lambda}_i^2} \omega_{i \bar{i}, \bar{k} k}+ \frac{1}{1+ \tilde{\lambda}_i^2} \tensor[]{\hat{\chi}}{_{i \bar{i},   \bar{k} k}}  \\
&\quad+   \frac{\tilde{\lambda}_i \tilde{\lambda}_j + \tilde{\lambda}_i + \tilde{\lambda}_j - 1}{(1+ \tilde{\lambda}_i^2)(1 + \tilde{\lambda}_j^2)} \left (  \omega_{j \bar{i}, k} \tensor[]{\hat{\chi}}{_{i \bar{j},  \bar{k}}}  +  \omega_{i \bar{j}, \bar{k}} \tensor[]{\hat{\chi}}{_{j \bar{i},   {k}}}  - (B_{ii} + B_{jj}) \left | \omega_{j \bar{i}, k} \right |^2 \right ).
\end{align*}

Note that by our choice of \hyperlink{tilde Lambda}{$\tilde{\Lambda}$} and \hyperlink{L:2.3}{Lemma 2.3}, no matter what the sign of $\tilde{\lambda}_n$ is, we always have $\tilde{\lambda}_{n-1} + \tilde{\lambda}_n > \tan(\frac{\epsilon_0}{4}) > 0$, which implies the following factorization

\begin{align*}
\quad \frac{\partial^2 \tilde{h}}{\partial z_k \partial \bar{z}_k}  
 &= - \sum_{i+j < 2n} \frac{\tilde{\lambda}_i + \tilde{\lambda}_j }{(1 + \tilde{\lambda}_i^2)(1 + \tilde{\lambda}_j^2)} \left | \frac{1 - \tilde{\lambda}_i \tilde{\lambda}_j }{ \tilde{\lambda}_i + \tilde{\lambda}_j} \omega_{j \bar{i}, k} - B_{jj} \omega_{j \bar{i}, k} +  \tensor[]{\hat{\chi}}{_{j \bar{i},  {k}}}   \right |^2 - \frac{2 \tilde{\lambda}_n}{(1 + \tilde{\lambda}_n^2)^2} \left |  \tensor[]{\hat{\chi}}{_{n \bar{n}, \bar{k}}}   \right |^2 \\ 
&\quad     + \frac{1}{1+ \tilde{\lambda}_i^2} \tensor[]{\hat{\chi}}{_{i \bar{i},   \bar{k} k}}     - \frac{\tilde{\lambda}_i}{1+ \tilde{\lambda}_i^2} \omega_{i \bar{i}, \bar{k} k} - B_{ii} \frac{1}{1 + \tilde{\lambda}_i^2} \omega_{i \bar{i}, \bar{k} k}   +   \sum_{i+j < 2n} \frac{ 1 }{ \lambda_i + \lambda_j } \left | \omega_{j \bar{i}, k}  \right |^2.
\end{align*}

\end{proof}

Now, with all these tools inside our toolbox, we can prove the following $C^2$ estimate when $\omega$ is merely a Hermitian metric, we have the following

\hypertarget{T:3.2}{\begin{ftheorem}}
Let $X^n$ be a connected compact Hermitian manifold with Hermitian form $\omega$. Suppose $u \colon X \rightarrow \mathbb{R}$ is a smooth function solving $\Theta_{\omega}(\ubar{\chi} + \sqrt{-1} \partial \bar{\partial} u) = h(x)$, where $h \colon X \rightarrow [(n-2)\frac{\pi}{2} + \epsilon_0, n \frac{\pi}{2} )$ for some $\epsilon_0 > 0$ and $\ubar{\chi}$ is a $C$-subsolution. Then there exists a constant $C$ such that 
\begin{align*}
\label{eq:3.20}
|\partial \bar{\partial} u |_{\omega} \leq C \left ( 1 + \sup_X |\nabla u|^2_{\omega} \right), \tag{3.20}
\end{align*}where $C = C\left ( |h|_{C^2(X, \omega)}, \osc_X u, \ubar{\chi}, \omega, \epsilon_0  \right)$ and $\nabla$ is the Chern connection with respect to $\omega$.\bigskip
\end{ftheorem}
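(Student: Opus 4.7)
The plan is to apply the maximum principle to the locally defined test function $U(x)$ at its maximum point $x_0$, using the linearized operator $\mathcal{L}_F$. First I would work in the holomorphic coordinates from \hyperlink{L:3.1}{Lemma 3.1} centered at $x_0$, so that $\omega_{i\bar{j}}(x_0) = \delta_{ij}$, $\hat{\chi}_{i\bar{j}}(x_0) = \lambda_i \delta_{ij}$, and $\omega_{i\bar{i},j}(x_0) = 0$. The constant diagonal perturbation $B$ is chosen exactly as in the construction of $\tilde{\Lambda}$ above, so that all eigenvalues $\tilde{\lambda}_i$ are distinct at $x_0$ (and remain so in a neighborhood) while preserving $\tilde{\lambda}_{n-1}+\tilde{\lambda}_n > \tan(\epsilon_0/4) > 0$ and $\sum_i \arctan(\tilde{\lambda}_i) > (n-2)\pi/2 + \epsilon_0/2$. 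Since $U \le U_0$ with equality at $x_0$, and $U$ is smooth there, we have $\mathcal{L}_F U(x_0) \le 0$, and also $\partial U(x_0) = 0$.

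The core computation splits into three pieces. The first piece, $\mathcal{L}_F\log(C_{\epsilon_0}+\tilde{\lambda}_1)$, expands via \hyperlink{L:2.2}{Lemma 2.2} and \hyperlink{L:3.2}{Lemma 3.2} into a "concavity of $\log$" contribution of the form $-(C_{\epsilon_0}+\tilde{\lambda}_1)^{-2}\sum_k f_k |\partial_k \tilde{\lambda}_1|^2$, plus the term $(C_{\epsilon_0}+\tilde{\lambda}_1)^{-1}\sum_k f_k \partial_{\bar{k}k}\tilde{\lambda}_1$, which after inserting the identity from \hyperlink{L:3.8}{Lemma 3.8} with $i = 1$ yields (a) second-order terms in $\hat{\chi}_{1\bar{1},\bar{k}k}$ that I would eliminate using the twice-differentiated equation from \hyperlink{L:3.4}{Lemma 3.4}, (b) a dangerous term $-2\tilde{\lambda}_n(1+\tilde{\lambda}_n^2)^{-2}|\hat{\chi}_{n\bar{n},\bar{k}}|^2$ which is harmless by \hyperlink{L:2.3}{Lemma 2.3}(iv), and (c) the crucial negative sum of squares $-\sum_{i+j<2n}\frac{\tilde{\lambda}_i+\tilde{\lambda}_j}{(1+\tilde{\lambda}_i^2)(1+\tilde{\lambda}_j^2)}\bigl|\cdots\bigr|^2$ whose validity as a square relies precisely on the sign $\tilde{\lambda}_i+\tilde{\lambda}_j > 0$ guaranteed by \hyperlink{L:2.3}{Lemma 2.3}(i). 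The second piece, $\mathcal{L}_F\bigl(-\tfrac{1}{2}\log(1-|\nabla u|^2/2K)\bigr)$, is computed as in Hou--Ma--Wu; it produces a gradient-quadratic term plus third-order cross terms $(2K)^{-1}\sum f_i\mathrm{Re}(u_{\bar{k}}\hat{\chi}_{i\bar{i},k})$, and I would absorb the bad cubic terms from the first piece by completing the square together with the negative terms from $\log$-concavity.

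The third piece, $\mathcal{L}_F(-Au) = -A\sum_i f_i(\tilde{\lambda})[\hat{\chi}_{i\bar{i}} - \underline{\chi}_{i\bar{i}}] + A\sum_i f_i \underline{u}_{i\bar{i}}$, is where the $C$-subsolution enters through \hyperlink{P:2.1}{Proposition 2.1} applied to $A = \tilde{\Lambda}$ and $B = \omega^{-1}\underline{\chi}$. I separate into two cases at $x_0$: either (2.7) holds, in which case $\mathcal{L}_F(-Au)$ contributes at most $-A\kappa\sum_i f_i + O(A)$ plus a torsion term from the discrepancy between $\Lambda$ and $\tilde{\Lambda}$ bounded by $CA$; or else $f_i(\tilde{\lambda}) > \kappa\sum_p f_p$ for every $i$, which in particular gives $f_1 = (1+\tilde{\lambda}_1^2)^{-1}$ bounded below by a positive multiple of the trace $\sum f_p$, and this is enough to close the gradient-quadratic estimate directly.

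The hard part will be the careful cancellation of the "bad" cubic third-order quantities between the three pieces, particularly the Hermitian-torsion contributions $\hat{\chi}_{i\bar{j},k}\omega_{j\bar{i},\bar{k}}$ and the lone singular term $\sum_{i+j<2n}(\tilde{\lambda}_i+\tilde{\lambda}_j)^{-1}|\omega_{j\bar{i},k}|^2$ appearing in \hyperlink{L:3.8}{Lemma 3.8}; the latter is bounded by $C/\epsilon_0$ thanks once again to the supercritical phase, which is the precise sense in which the dHYM equation interacts well with torsion. Once these cancellations succeed, choosing $A$ sufficiently large (but depending only on the allowed data) forces $\mathcal{L}_F U(x_0) > 0$ unless $\lambda_1(x_0) = \tilde{\lambda}_1(x_0) \le C(1+K)$, contradicting maximality and yielding the desired estimate $|\partial\bar{\partial}u|_\omega \le C(1 + \sup_X|\nabla u|_\omega^2)$. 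Since the bound depends only on $\kappa, \epsilon_0$ and the geometry but not on the auxiliary matrix $B$, letting the entries of $B$ tend to zero transfers the estimate to $U_0$ itself.
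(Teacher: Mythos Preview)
Your outline follows the paper's strategy closely and is largely correct in structure: same test function, same coordinate/perturbation setup, same three-piece decomposition, and the same use of \hyperlink{P:2.1}{Proposition 2.1} via the $C$-subsolution. However, there is one genuine gap.

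Your item (b) claims that the term
\[
\frac{2\tilde{\lambda}_n}{(C_{\epsilon_0}+\tilde{\lambda}_1)(1+\tilde{\lambda}_n^2)^2}\,\bigl|\hat{\chi}_{n\bar{n},1}\bigr|^2
\]
(which is what appears in the lower bound for $\mathcal{L}_F(G(\tilde{\Lambda}))$ after you substitute the twice-differentiated equation) is ``harmless by \hyperlink{L:2.3}{Lemma 2.3}(iv)''. That lemma only bounds $|\tilde{\lambda}_n|$. The factor $|\hat{\chi}_{n\bar{n},1}|^2 = |\ubar{\chi}_{n\bar{n},1}+u_{n\bar{n}1}|^2$ is a third-order quantity in $u$ and is \emph{not} a priori bounded, so when $\tilde{\lambda}_n\le 0$ this contribution is negative and cannot simply be absorbed into a constant. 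This is precisely the place where the paper splits into the two cases $\tilde{\lambda}_n>0$ and $\tilde{\lambda}_n\le 0$; the second case is the substantive one (see (\ref{eq:3.27})--(\ref{eq:3.30})). There one rewrites, via the once-differentiated equation (\ref{eq:3.12}),
\[
\frac{1}{1+\tilde{\lambda}_n^2}\,\hat{\chi}_{n\bar{n},1}=\frac{\partial\tilde{h}}{\partial z_1}-\sum_{q<n}\frac{1}{1+\tilde{\lambda}_q^2}\,\hat{\chi}_{q\bar{q},1},
\]
applies Cauchy--Schwarz, and then uses the crucial arithmetic inequality $\sum_{q<n}\tilde{\lambda}_n/\tilde{\lambda}_q\ge -1-\tilde{\lambda}_n\tan(\epsilon_0/2)$, which follows from $\sigma_{n-1}(\tilde{\lambda})\ge 0$ in \hyperlink{L:2.3}{Lemma 2.3}(ii) (not (iv)). Only after this does the bad third-order sum collapse to a bounded quantity involving $|\partial\tilde{h}/\partial z_1|^2$. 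Your proposal is missing this step; without it the argument does not close when $\tilde{\lambda}_n\le 0$.
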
 

\begin{proof}
As we mentioned, we consider the test function,
\begin{align*}
U(x) \coloneqq \log \left (  C_{\epsilon_0} + \tilde{\lambda}_{\text{max}}   \right ) - \frac{1}{2}  \log \left( 1 - \frac{| \nabla u |^2}{2 K}  \right )  - A u, 
\end{align*}where we define $K \coloneqq 1 + \sup_X \left | \nabla u \right |^2_{\omega}$ and $A$ is a constant to be determined later. For convenience, let us normalize $u$ so that $\inf_X u =0$. \bigskip

At the maximum point $x_0$ of our test function, we have

\begin{align*}
\mathcal{L}_F \left (  G( \tilde{\Lambda} )   \right )   
&=f_i (\tilde{\lambda}) g_{rs} (\tilde{\lambda}) \frac{\partial \Lambda_r^r}{\partial z_i} \frac{\partial \Lambda_s^s}{\partial \bar{z}_i} + f_i (\tilde{\lambda}) \sum_{r \neq s} \frac{g_r - g_s}{\tilde{\lambda}_r - \tilde{\lambda}_s} (\tilde{\lambda}) \frac{\partial \Lambda_s^r}{\partial z_i} \frac{\partial \Lambda_r^s}{\partial \bar{z}_i} + f_i (\tilde{\lambda}) g_r (\tilde{\lambda}) \frac{\partial^2 \Lambda_r^r}{\partial z_i \partial \bar{z}_i}    \\
&=  -\frac{1}{(1+ \tilde{\lambda}_i^2)(C_{\epsilon_0} + \tilde{\lambda}_1)^2} \left | \omega_{1 \bar{1}, i} - B_{11} \omega_{1 \bar{1}, i} + \tensor[]{\hat{\chi}}{_{1 \bar{1},  {i}}} \right |^2 + \frac{ 1 - \tilde{\lambda}_1^2 }{(1 + \tilde{\lambda}_i^2)(C_{\epsilon_0} + \tilde{\lambda}_1)^2} \left |  \omega_{1 \bar{1}, i}  \right |^2 \\
&\quad + \sum_{s \neq 1} \frac{1}{(1 + \tilde{\lambda}_i^2)(C_{\epsilon_0} + \tilde{\lambda}_1)( \tilde{\lambda}_1 - \tilde{\lambda}_s)} \left | -\tilde{\lambda}_1 \omega_{s \bar{1}, i} - B_{ss} \omega_{s \bar{1}, i} +   \tensor[]{\hat{\chi}}{_{s \bar{1},  {i}}} \right |^2 \\
&\quad + \sum_{s \neq 1} \frac{1}{(1 + \tilde{\lambda}_i^2)(C_{\epsilon_0} + \tilde{\lambda}_1)( \tilde{\lambda}_1 - \tilde{\lambda}_s)} \left | -\tilde{\lambda}_1 \omega_{1 \bar{s}, i} - B_{11} \omega_{1 \bar{s}, i} +   \tensor[]{\hat{\chi}}{_{1 \bar{s},  {i}}} \right |^2 \\
&\quad- \sum_{s \neq 1} \frac{\tilde{\lambda}_1}{(1 + \tilde{\lambda}_i^2)(C_{\epsilon_0} + \tilde{\lambda}_1)} \left |  \omega_{s \bar{1}, i}   \right |^2 - \sum_{s \neq 1} \frac{\tilde{\lambda}_1}{(1 + \tilde{\lambda}_i^2)(C_{\epsilon_0} + \tilde{\lambda}_1)} \left |  \omega_{1 \bar{s}, i}   \right |^2 \\
&\quad+ \frac{1}{(1 + \tilde{\lambda}_i^2)(C_{\epsilon_0} + \tilde{\lambda}_1)} \left ( \tilde{\lambda}_1 \left (  \left |  \omega_{j \bar{1}, i}  \right |^2  + \left | \omega_{1 \bar{j}, i}  \right |^2  \right ) - \tilde{\lambda}_1 \omega_{1 \bar{1}, \bar{i} i} - B_{11} \omega_{1 \bar{1}, \bar{i} i}   +   \tensor[]{\hat{\chi}}{_{1 \bar{1},  \bar{i} i}}   \right ).
\end{align*}By simplifying it and by our choice of \hyperlink{L:3.1}{coordinate}, at the maximum point, we get

\begin{align*}
\label{eq:3.21}
\mathcal{L}_F \left (  G( \tilde{\Lambda} )   \right ) 
&\geq -C_1  -\frac{1}{(1+ \tilde{\lambda}_i^2)(C_{\epsilon_0} + \tilde{\lambda}_1)^2} \left |   \tensor[]{\hat{\chi}}{_{1 \bar{1},  {i}}} \right |^2  + \frac{1}{(1 + \tilde{\lambda}_i^2)(C_{\epsilon_0} + \tilde{\lambda}_1)}  u_{1 \bar{1} \bar{i} i}  \tag{3.21} \\
&\quad+ \sum_{s \neq 1} \frac{1}{(1 + \tilde{\lambda}_i^2)(C_{\epsilon_0} + \tilde{\lambda}_1)(\tilde{\lambda}_1 - \tilde{\lambda}_s)}  \left | -\tilde{\lambda}_1 \omega_{s \bar{1}, i} - B_{ss} \omega_{s \bar{1}, i} +   \tensor[]{\hat{\chi}}{_{s \bar{1},  {i}}} \right |^2  \\
&\quad+ \sum_{s \neq 1} \frac{1}{(1 + \tilde{\lambda}_i^2)(C_{\epsilon_0} + \tilde{\lambda}_1)(\tilde{\lambda}_1 - \tilde{\lambda}_s)}  \left | -\tilde{\lambda}_1 \omega_{1 \bar{s}, i}     +  \tensor[]{\hat{\chi}}{_{1 \bar{s},  {i}}} \right |^2.    
\end{align*}

Combining (\ref{eq:3.19}) and (\ref{eq:3.21}), we get,

\begingroup
\allowdisplaybreaks
\begin{align*}
\label{eq:3.22}
\mathcal{L}_F \left (  G( \tilde{\Lambda} )   \right )  
&\geq -C_2  -\frac{1}{(1 + \tilde{\lambda}_i^2)(C_{\epsilon_0} + \tilde{\lambda}_1)^2} \left |   \tensor[]{\hat{\chi}}{_{1 \bar{1},  {i}}} \right |^2  \tag{3.22}   \\
&\quad+ \sum_{s \neq 1} \frac{1}{(1 + \tilde{\lambda}_i^2)(C_{\epsilon_0} + \tilde{\lambda}_1)(\tilde{\lambda}_1 - \tilde{\lambda}_s)}   \left | -\tilde{\lambda}_1 \omega_{s \bar{1}, i} - B_{ss} \omega_{s \bar{1}, i} +   \tensor[]{\hat{\chi}}{_{s \bar{1},  {i}}} \right |^2   \\
&\quad+ \sum_{s \neq 1} \frac{1}{(1 + \tilde{\lambda}_i^2)(C_{\epsilon_0} + \tilde{\lambda}_1)(\tilde{\lambda}_1 - \tilde{\lambda}_s)}   \left | -\tilde{\lambda}_1 \omega_{1 \bar{s}, i}    +  \tensor[]{\hat{\chi}}{_{1 \bar{s},  {i}}} \right |^2     \\
&\quad+ \sum_{i+j < 2n} \frac{\tilde{\lambda}_i + \tilde{\lambda}_j }{(C_{\epsilon_0} + \tilde{\lambda}_1)(1 + \tilde{\lambda}_i^2)(1 + \tilde{\lambda}_j^2)} \left | \frac{1 - \tilde{\lambda}_i \tilde{\lambda}_j }{ \tilde{\lambda}_i + \tilde{\lambda}_j} \omega_{j \bar{i}, 1} - B_{jj} \omega_{j \bar{i}, 1} +  \tensor[]{\hat{\chi}}{_{j \bar{i},  {1}}}   \right |^2 \\
&\quad- \sum_{i +j <2n} \frac{1}{(C_{\epsilon_0} + \tilde{\lambda}_1)( \tilde{\lambda}_i + \tilde{\lambda}_j)} \left | \omega_{j \bar{i}, 1}    \right |^2 + \frac{2 \tilde{\lambda}_n}{(C_{\epsilon_0} + \tilde{\lambda}_1)(1 + \tilde{\lambda}_n^2)^2} \left |  \tensor[]{\hat{\chi}}{_{n \bar{n}, \bar{1}}}   \right |^2.
\end{align*}
\endgroup

Note that the phase condition ${\epsilon_0} > 0$ is crucial here, otherwise the terms $\frac{1}{(C_{\epsilon_0} + \tilde{\lambda}_1)( \tilde{\lambda}_i + \tilde{\lambda}_j)} \left | \omega_{j \bar{i}, 1}    \right |^2$ might blow up. With \hyperlink{L:2.3}{Lemma 2.3}, we know $\tilde{\lambda}_i + \tilde{\lambda}_j > \tan \left(  \frac{{\epsilon_0}}{4}  \right ) > 0$ if $i + j < 2n$. Thus, we have

\begingroup
\allowdisplaybreaks
\begin{align*}
\label{eq:3.23}
&\mathcal{L}_F \left (  G( \tilde{\Lambda} )   \right ) \tag{3.23} \\
&\geq -C_3  -\frac{1}{(1 + \tilde{\lambda}_i^2)(C_{\epsilon_0} + \tilde{\lambda}_1)^2} \left |   \tensor[]{\hat{\chi}}{_{1 \bar{1},  {i}}} \right |^2  + \frac{2 \tilde{\lambda}_n}{(C_{\epsilon_0} + \tilde{\lambda}_1)(1 + \tilde{\lambda}_n^2)^2} \left |  \tensor[]{\hat{\chi}}{_{n \bar{n}, \bar{1}}}   \right |^2   \\
&\quad+ \sum_{s \neq 1} \frac{1}{(1 + \tilde{\lambda}_i^2)(C_{\epsilon_0} + \tilde{\lambda}_1)(\tilde{\lambda}_1 - \tilde{\lambda}_s)}  \left | -\tilde{\lambda}_1 \omega_{s \bar{1}, i} - B_{ss} \omega_{s \bar{1}, i} +   \tensor[]{\hat{\chi}}{_{s \bar{1},  {i}}} \right |^2   \\
&\quad+ \sum_{s \neq 1} \frac{1}{(1 + \tilde{\lambda}_i^2)(C_{\epsilon_0} + \tilde{\lambda}_1)(\tilde{\lambda}_1 - \tilde{\lambda}_s)}   \left | -\tilde{\lambda}_1 \omega_{1 \bar{s}, i}     +  \tensor[]{\hat{\chi}}{_{1 \bar{s},  {i}}} \right |^2     \\
&\quad+ \sum_{i+j < 2n} \frac{\tilde{\lambda}_i + \tilde{\lambda}_j }{(C_{\epsilon_0} + \tilde{\lambda}_1)(1 + \tilde{\lambda}_i^2)(1 + \tilde{\lambda}_j^2)} \left | \frac{1 - \tilde{\lambda}_i \tilde{\lambda}_j }{ \tilde{\lambda}_i + \tilde{\lambda}_j} \omega_{j \bar{i}, 1} - B_{jj} \omega_{j \bar{i}, 1} +  \tensor[]{\hat{\chi}}{_{j \bar{i},  {1}}}   \right |^2 \\
&=  -C_3  -\frac{1}{(1+ \tilde{\lambda}_1^2)(C_{\epsilon_0} + \tilde{\lambda}_1)^2} \left |   \tensor[]{\hat{\chi}}{_{1 \bar{1},  {1}}} \right |^2  +     \frac{ 2 \tilde{\lambda}_i }{(C_{\epsilon_0} + \tilde{\lambda}_1)(1 + \tilde{\lambda}_i^2)^2} \left |   \tensor[]{\hat{\chi}}{_{i \bar{i},  {1}}}   \right |^2   \\
&\quad- \sum_{s \neq 1} \frac{1}{(1 + \tilde{\lambda}_s^2)(C_{\epsilon_0} + \tilde{\lambda}_1)^2} \left |   \tensor[]{\hat{\chi}}{_{1 \bar{1},  {s}}} \right |^2 \\ 
&\quad+ \sum_{s \neq 1} \frac{1 }{(C_{\epsilon_0} + \tilde{\lambda}_1)(1 + \tilde{\lambda}_s^2)( \tilde{\lambda}_1 - \tilde{\lambda}_s)}   \left | - B_{ss} \omega_{s \bar{1}, 1} +   \tensor[]{\hat{\chi}}{_{s \bar{1},  {1}}}  \right |^2 \\  
&\quad-\sum_{s \neq 1} \frac{ \tilde{\lambda}_s }{(C_{\epsilon_0} + \tilde{\lambda}_1)(1 + \tilde{\lambda}_s^2)( \tilde{\lambda}_1 - \tilde{\lambda}_s)}     \omega_{s \bar{1}, {1}} \left ( -B_{ss} \omega_{1 \bar{s}, \bar{1}}  + \tensor[]{\hat{\chi}}{_{1 \bar{s},  \bar{1}}} \right )      \\
&\quad-\sum_{s \neq 1} \frac{ \tilde{\lambda}_s }{(C_{\epsilon_0} + \tilde{\lambda}_1)(1 + \tilde{\lambda}_s^2)( \tilde{\lambda}_1 - \tilde{\lambda}_s)}      \omega_{1 \bar{s}, \bar{1}} \left ( -B_{ss} \omega_{s \bar{1}, 1}  +  \tensor[]{\hat{\chi}}{_{s \bar{1},  {1}}}  \right )   \\
&\quad+ \sum_{s \neq 1} \frac{ \tilde{\lambda}_1 - \tilde{\lambda}_s + 2 \tilde{\lambda}_1 \tilde{\lambda}_s^2   }{(C_{\epsilon_0} + \tilde{\lambda}_1)(1 + \tilde{\lambda}_s^2 )( \tilde{\lambda}_1 - \tilde{\lambda}_s)( \tilde{\lambda}_1 + \tilde{\lambda}_s)} \left |   \omega_{s \bar{1}, 1}   \right |^2   \\
&=  -C_3  -\frac{1}{(1+ \tilde{\lambda}_1^2)(C_{\epsilon_0} + \tilde{\lambda}_1)^2} \left |   \tensor[]{\hat{\chi}}{_{1 \bar{1},  {1}}} \right |^2 +   \frac{ 2 \tilde{\lambda}_i }{(C_{\epsilon_0} + \tilde{\lambda}_1)(1 + \tilde{\lambda}_i^2)^2} \left |   \tensor[]{\hat{\chi}}{_{i \bar{i},  {1}}}   \right |^2    \\
&\quad+ \sum_{s \neq 1} \frac{C_{\epsilon_0} + \tilde{\lambda}_s}{(C_{\epsilon_0} + \tilde{\lambda}_1)^2(1 + \tilde{\lambda}_s^2)( \tilde{\lambda}_1 - \tilde{\lambda}_s)}   \left |   \tensor[]{\hat{\chi}}{_{1 \bar{1},  {s}}} + \frac{C_{\epsilon_0} + \tilde{\lambda}_1}{C_{\epsilon_0} + \tilde{\lambda}_s} \left (   S_{s1} - B_{ss} \omega_{s \bar{1}, 1}   - \tilde{\lambda}_s \omega_{s \bar{1}, 1}     \right ) \right |^2     \\
&\quad- \sum_{s \neq 1} \frac{ 1 }{(C_{\epsilon_0} + \tilde{\lambda}_1)(C_{\epsilon_0} + \tilde{\lambda}_s)(1 + \tilde{\lambda}_s^2)} \left |  S_{s1} - B_{ss} \omega_{s \bar{1}, 1}  - \tilde{\lambda}_s \omega_{s \bar{1}, 1}   \right |^2 \\ 
&\quad+ \frac{1}{(C_{\epsilon_0} + \tilde{\lambda}_1)(\tilde{\lambda}_1 + \tilde{\lambda}_s)} \left | \omega_{s \bar{1}, 1} \right |^2, 
\end{align*}
\endgroup
where we set $S_{s1} = \tensor[]{\hat{\chi}}{_{s \bar{1},  {1}}} - \tensor[]{\hat{\chi}}{_{1 \bar{1},  {s}}}$. In conclusion, by dropping off some positive terms in (\ref{eq:3.23}) and bounding some terms in (\ref{eq:3.23}), we get a relatively simplified inequality comparing to \cite{collins20151}.

\begin{align*}
\label{eq:3.24}
\mathcal{L}_F \left (  G( \tilde{\Lambda} )   \right ) \geq   -C_4  -\frac{1}{(1+ \tilde{\lambda}_1^2)(C_{\epsilon_0} + \tilde{\lambda}_1)^2} \left |   \tensor[]{\hat{\chi}}{_{1 \bar{1},  {1}}} \right |^2 +    \frac{ 2 \tilde{\lambda}_i }{(C_{\epsilon_0} + \tilde{\lambda}_1)(1 + \tilde{\lambda}_i^2)^2} \left |   \tensor[]{\hat{\chi}}{_{i \bar{i},  {1}}}   \right |^2. \tag{3.24}
\end{align*}There are two cases to be considered, $\tilde{\lambda}_n > 0$ or $\tilde{\lambda}_n \leq 0$.\bigskip

$\bullet$ For the case $\tilde{\lambda}_n > 0$, by dropping off some positive terms in (\ref{eq:3.24}), we have
\begin{align*}
\label{eq:3.25}
\mathcal{L}_F \left (  G( \tilde{\Lambda} )   \right )  
&\geq -C_4  -\frac{1}{(1+ \tilde{\lambda}_1^2)(C_{\epsilon_0} + \tilde{\lambda}_1)^2} \left |   \tensor[]{\hat{\chi}}{_{1 \bar{1},  {1}}} \right |^2 +    \frac{ 2 \tilde{\lambda}_1 }{(C_{\epsilon_0} + \tilde{\lambda}_1)(1 + \tilde{\lambda}_1^2)^2} \left |   \tensor[]{\hat{\chi}}{_{1 \bar{1},  {1}}}   \right |^2 \tag{3.25} \\
&= - C_4 + \frac{ \tilde{\lambda}_1^2 + 2 C_{\epsilon_0} \tilde{\lambda}_1 -1}{(C_{\epsilon_0} + \tilde{\lambda}_1)^2 (1+ \tilde{\lambda}_1^2)^2}  \left |  \tensor[]{\hat{\chi}}{_{1 \bar{1}, 1}}  \right |^2 \geq -C_4,    
\end{align*}provided that $\tilde{\lambda}_1 = \lambda_1$ is sufficiently large, otherwise we get the estimate of $\left | \partial \bar{\partial} u \right |_{\omega}$ immediately.

In conclusion, we see that for the case $\tilde{\lambda}_n > 0$, provided that $\lambda_n$ is sufficiently large, we get
\begin{align*}
\label{eq:3.26}
\mathcal{L}_F \left (  G( \tilde{\Lambda} )   \right )  
&\geq -C_4.    \tag{3.26} \\
\end{align*}

$\bullet$ For the case $\tilde{\lambda}_n \leq 0$, the estimate is much more harder. First we have
\begin{align*}
\label{eq:3.27}
&\frac{ 2 \tilde{\lambda}_i}{(C_{\epsilon_0} + \tilde{\lambda}_1)(1 + \tilde{\lambda}_i^2)^2} \left|  \tensor[]{\hat{\chi}}{_{i \bar{i}, 1}} \right|^2 \tag{3.27} \\
&= \sum_{q < n} \frac{1}{C_{\epsilon_0} + \tilde{\lambda}_1}\frac{ 2 \tilde{\lambda}_q}{(1 + \tilde{\lambda}_q^2)^2}\left|  \tensor[]{\hat{\chi}}{_{q \bar{q}, 1}}  \right|^2 + \frac{1}{C_{\epsilon_0} + \tilde{\lambda}_1} \frac{ 2 \tilde{\lambda}_n}{(1 + \tilde{\lambda}_n^2)^2} \left |   \tensor[]{\hat{\chi}}{_{n \bar{n}, 1}} \right |^2    \\
&= \sum_{q < n} \frac{1}{C_{\epsilon_0} + \tilde{\lambda}_1}\frac{2 \tilde{\lambda}_q}{(1 + \tilde{\lambda}_q^2)^2}\left|  \tensor[]{\hat{\chi}}{_{q \bar{q}, 1}}  \right|^2 + \frac{2 \tilde{\lambda}_n}{C_{\epsilon_0} + \tilde{\lambda}_1}   \left | \frac{\partial \tilde{h}}{\partial z_1} - \sum_{q < n}\frac{1}{1+ \tilde{\lambda}_q^2} \tensor[]{\hat{\chi}}{_{q \bar{q}, 1}} \right |^2.
\end{align*}The last equality is obtained by equation (\ref{eq:3.12}). Moreover, by Cauchy--Schwartz inequality, we have

\begin{align*}
\label{eq:3.28}
&\sum_{q < n} \frac{1}{C_{\epsilon_0} + \tilde{\lambda}_1}\frac{2 \tilde{\lambda}_q}{(1 + \tilde{\lambda}_q^2)^2}\left|  \tensor[]{\hat{\chi}}{_{q \bar{q}, 1}}  \right|^2 + \frac{2 \tilde{\lambda}_n}{C_{\epsilon_0} + \tilde{\lambda}_1}   \left | \frac{\partial \tilde{h}}{\partial z_1} - \sum_{q < n}\frac{1}{1+ \tilde{\lambda}_q^2} \tensor[]{\hat{\chi}}{_{q \bar{q}, 1}} \right |^2  \tag{3.28} \\
&\geq    \frac{2}{C_{\epsilon_0} + \tilde{\lambda}_1} \left ( \frac{1}{\tan ( \frac{{\epsilon_0}}{2} )}  \left | \frac{\partial \tilde{h}}{\partial z_1} \right |^2  + \sum_{q < n}\frac{ \tilde{\lambda}_q}{(1 + \tilde{\lambda}_q^2)^2} \left | \tensor[]{\hat{\chi}}{_{q \bar{q}, 1}}  \right |^2 \right ) \left ( \tan ( \frac{{\epsilon_0}}{2} ) \tilde{\lambda}_n + \sum_{q < n} \frac{\tilde{\lambda}_n}{\tilde{\lambda}_q}   \right ) \\ 
&\quad+ \sum_{q < n} \frac{1}{C_{\epsilon_0} + \tilde{\lambda}_1}\frac{2 \tilde{\lambda}_q}{(1 + \tilde{\lambda}_q^2)^2}\left| \tensor[]{\hat{\chi}}{_{q \bar{q}, 1}} \right|^2.
\end{align*}

Note that due to \hyperlink{L:2.3}{Lemma 2.3} and the choice of our perturbation \hyperlink{tilde Lambda}{$\tilde{\Lambda}$}, we have 
\begin{align*}
\sum_i \frac{1}{\tilde{\lambda}_i} = \frac{\sigma_{n-1} (\tilde{\lambda})}{ \tilde{\lambda}_1 \cdot \tilde{\lambda}_2 \cdot \dots \cdot \tilde{\lambda}_n} = \frac{\sigma_{n-1}(\tilde{\lambda})}{\sigma_n (\tilde{\lambda})} \leq -\tan ( \frac{{\epsilon_0}}{2} ) < 0,
\end{align*}thus
\begin{align*}
\label{eq:3.29}
\sum_{q < n} \frac{1}{\tilde{\lambda}_q} \leq - \frac{1}{\tilde{\lambda}_n} - \tan ( \frac{{\epsilon_0}}{2} ) \Rightarrow \sum_{q < n} \frac{\tilde{\lambda}_n}{\tilde{\lambda}_q} \geq - 1 - \tilde{\lambda}_n \tan ( \frac{{\epsilon_0}}{2} ). \tag{3.29}
\end{align*}In conclusion, with (\ref{eq:3.27}), (\ref{eq:3.28}) and (\ref{eq:3.29}), we get

\begin{align*}
\label{eq:3.30}
\frac{ 2 \tilde{\lambda}_i}{(C_{\epsilon_0} + \tilde{\lambda}_1)(1 + \tilde{\lambda}_i^2)^2} \left|  \tensor[]{\hat{\chi}}{_{i \bar{i}, 1}} \right|^2 &\geq \frac{2}{C_{\epsilon_0} + \tilde{\lambda}_1}  \frac{1}{\tan ( \frac{{\epsilon_0}}{2} )}  \left | \frac{\partial \tilde{h}}{\partial z_1} \right |^2 \left ( \tilde{\lambda}_n \tan ( \frac{{\epsilon_0}}{2} ) + \sum_{q < n} \frac{ \tilde{\lambda}_n}{ \tilde{\lambda}_q}   \right ) \tag{3.30} \\ 
\geq  &- \frac{2}{C_{\epsilon_0} + \tilde{\lambda}_1}  \frac{1}{\tan ( \frac{{\epsilon_0}}{2} )}  \left | \frac{\partial \tilde{h}}{\partial z_1} \right |^2 \geq -C_5. 
\end{align*}Combining (\ref{eq:3.24}) and (\ref{eq:3.30}), we get
\begin{align*}
\label{eq:3.31}
\mathcal{L}_F \left (  G( \tilde{\Lambda} )   \right )
\geq   -C_6  -\frac{1}{(1+ \tilde{\lambda}_1^2)(C_{\epsilon_0} + \tilde{\lambda}_1)^2} \left |   \tensor[]{\hat{\chi}}{_{1 \bar{1},  {1}}} \right |^2. \tag{3.31}
\end{align*}By Cauchy--Schwartz inequality, we can get the third derivative term $u_{1 \bar{1} 1}$, which is crucial in our whole estimate. We have
\begin{align*}
\label{eq:3.32}
\mathcal{L}_F \left (  G( \tilde{\Lambda} )   \right )
&\geq  - C_6 -\frac{1}{(1+ \tilde{\lambda}_1^2)(C_{\epsilon_0} + \tilde{\lambda}_1)^2} \left |   \tensor[]{ {\chi}}{_{1 \bar{1},  {1}}} + u_{1 \bar{1} 1} \right |^2 \geq - C_7   -  \frac{(1 + \epsilon_1)\left | u_{1 \bar{1} 1} \right|^2}{(1 + \tilde{\lambda}_1^2)(C_{\epsilon_0} + \tilde{\lambda}_1)^2} ,  \tag{3.32}
\end{align*}where $\epsilon_1$ will be determined later.\bigskip

Let us move back to our locally defined test function $U(x)$. The elliptic operator $\mathcal{L}_F$ acting on other terms will yield
\begin{align*}
\label{eq:3.33}
\mathcal{L}_F \left ( u \right ) 
&=  \frac{u_{i \bar{i}}}{1 + \tilde{\lambda}_i^2} =  \frac{-\chi_{i \bar{i}} + \tilde{\lambda}_i + B_{ii}}{1 + \tilde{\lambda}_i^2}, \tag{3.33}
\end{align*}and

\begin{align*}
\label{eq:3.34}
\mathcal{L}_F \left ( \log \left ( 1 - \frac{| \nabla u|^2}{2K} \right ) \right ) &= \frac{-1}{1 + \tilde{\lambda}_i^2} \frac{1}{(-2 K + | \nabla u |^2)^2} \left | u_{ir} u_{\bar{r}} + u_{i \bar{r}} u_r - \omega_{r \bar{s}, i} u_s u_{\bar{r}}   \right |^2  \tag{3.34} \\
&\quad+  \frac{1}{1 + \tilde{\lambda}_i^2} \frac{1}{ ( -2K + |\nabla u |^2 )} \left ( u_{r \bar{i} i} u_{\bar{r}} + u_{\bar{r} \bar{i} i} u_r + u_{r \bar{i}} u_{\bar{r} i} + u_{ri} u_{\bar{r} \bar{i}}   \right)  \\
&\quad- \frac{2}{1 + \tilde{\lambda}_i^2} \frac{1}{ ( -2K + |\nabla u |^2 )} \left ( \Re \left ( \omega_{s \bar{r}, i} u_{r \bar{i}} u_{\bar{s}} \right )   + \Re \left (\omega_{s \bar{r}, \bar{i}} u_{r {i}} u_{\bar{s}} \right)   \right) \\
&\quad+ \frac{1}{1 + \tilde{\lambda}_i^2} \frac{1}{ ( -2K + |\nabla u |^2 )} \left ( \omega_{p \bar{r}, i} \omega_{s \bar{p}, \bar{i}}  + \omega_{p \bar{r}, \bar{i}} \omega_{s \bar{p},  {i}}  - \omega_{s \bar{r}, \bar{i} i}    \right)  u_r u_{\bar{s}} \\
&\geq -C_8 - \frac{1}{1 + \tilde{\lambda}_i^2} \frac{1}{(-2 K + | \nabla u |^2)^2} \left | u_{ir} u_{\bar{r}} + u_{i \bar{r}} u_r - \omega_{r \bar{s}, i} u_s u_{\bar{r}}   \right |^2    \\
&\quad+  \frac{1}{1 + \tilde{\lambda}_i^2} \frac{1}{ ( -2K + |\nabla u |^2 )} \left ( u_{r \bar{i} i} u_{\bar{r}} + u_{\bar{r} \bar{i} i} u_r + u_{r \bar{i}} u_{\bar{r} i} + u_{ri} u_{\bar{r} \bar{i}}   \right)  \\
&\quad- \frac{2}{1 + \tilde{\lambda}_i^2} \frac{1}{ ( -2K + |\nabla u |^2 )} \left ( \Re \left ( \omega_{s \bar{r}, i} u_{r \bar{i}} u_{\bar{s}} \right )   + \Re \left (\omega_{s \bar{r}, \bar{i}} u_{r {i}} u_{\bar{s}} \right)   \right).
\end{align*}


Combining (\ref{eq:3.32}), (\ref{eq:3.33}), and (\ref{eq:3.34}), we get

\begin{align*}
\label{eq:3.35}
 \mathcal{L}_F \left ( U(x)   \right )   
&\geq - C_7 -C_8    - \frac{1 + \epsilon_1}{(1 + \tilde{\lambda}_1^2)(C_{\epsilon_0} + \tilde{\lambda}_1)^2} \left | u_{1 \bar{1} 1} \right|^2 + A   \frac{\chi_{i \bar{i}} - \tilde{\lambda}_i - B_{ii}}{1 + \tilde{\lambda}_i^2} \tag{3.35}  \\
&\quad+ \frac{1}{2} \frac{1}{1 + \tilde{\lambda}_i^2} \frac{1}{(-2 K + | \nabla u |^2)^2} \left | u_{ir} u_{\bar{r}} + u_{i \bar{r}} u_r - \omega_{r \bar{s}, i} u_s u_{\bar{r}}   \right |^2 \\
&\quad- \frac{1}{2} \frac{1}{1 + \tilde{\lambda}_i^2} \frac{1}{ ( -2K + |\nabla u |^2 )} \left ( u_{r \bar{i} i} u_{\bar{r}} + u_{\bar{r} \bar{i} i} u_r    \right) + \frac{1}{2} \frac{1}{1 + \tilde{\lambda}_i^2} \frac{|u_{r \bar{i}}|^2 + |u_{ri}|^2}{ 2K - |\nabla u |^2 }  \\
&\quad+  \frac{1}{1 + \tilde{\lambda}_i^2} \frac{1}{ ( -2K + |\nabla u |^2 )} \left ( \Re \left ( \omega_{s \bar{r}, i} u_{r \bar{i}} u_{\bar{s}} \right )   + \Re \left (\omega_{s \bar{r}, \bar{i}} u_{r {i}} u_{\bar{s}} \right)  \right).
\end{align*}

Also, at the maximum point of $G( \tilde{\Lambda} ) - \frac{1}{2}  \log \left( 1 - \frac{| \nabla u |^2}{2 K}  \right )  - A u$, the partial derivative will be $0$, which implies that $\frac{\partial G}{\partial \Lambda_{r}^s} \left (  \tilde{\Lambda}  \right ) \frac{\partial \tilde{\Lambda}_{r}^s}{\partial z_i} + \frac{1}{4K - 2|\nabla u|^2} \left ( - \omega_{r \bar{s}, i} u_s u_{\bar{r}} + u_{ri} u_{\bar{r}} + u_r u_{\bar{r} i} \right ) - A u_i   = 0$. Once we simplify it, we get $\frac{1}{C_{\epsilon_0} + \tilde{\lambda}_1} \left ( \frac{\partial \chi_{1 \bar{1}}}{\partial z_i}   + u_{1 \bar{1} i}  \right )  + \frac{1}{4K - 2 |\nabla u|^2} \left ( - \omega_{r \bar{s}, i} u_s u_{\bar{r}} + u_{ri} u_{\bar{r}} + u_r u_{\bar{r} i} \right ) - A u_i  =0$, which implies that
\begin{align*}
\label{eq:3.36}
 u_{1 \bar{1} i} =  & - \frac{\partial \chi_{1 \bar{1}}}{\partial z_i} + A ( C_{\epsilon_0} + \tilde{\lambda}_1) u_i  - \frac{C_{\epsilon_0} + \tilde{\lambda}_1}{4K - 2 |\nabla u|^2} \left ( - \omega_{r \bar{s}, i} u_s u_{\bar{r}} + u_{ri} u_{\bar{r}} + u_r u_{\bar{r} i} \right ).  \tag{3.36}
\end{align*}

By applying Cauchy--Schwartz inequality to (\ref{eq:3.36}), we get
\begin{align*}
\label{eq:3.37}
|u_{1 \bar{1} 1}|^2 &=\left|  - \frac{\partial \chi_{1 \bar{1}}}{\partial z_1} - \frac{C_{\epsilon_0} + \tilde{\lambda}_1}{4K - 2 |\nabla u|^2} \left ( - \omega_{r \bar{s}, i} u_s u_{\bar{r}} + u_{r1} u_{\bar{r}} + u_r u_{\bar{r} 1} \right ) + A (C_{\epsilon_0} + \tilde{\lambda}_1) u_1   \right|^2 \tag{3.37}  \\
&\leq (1 + \epsilon_2) \left(  \frac{C_{\epsilon_0}  + \tilde{\lambda}_1}{4K - 2 |\nabla u|^2}  \right)^2 \left| - \omega_{r \bar{s}, i} u_s u_{\bar{r}} + u_{r1} u_{\bar{r}} + u_r u_{\bar{r} 1}\right|^2 \\ 
&\qquad \qquad \qquad \qquad \qquad \qquad \qquad \qquad  + C_9  + 2(1+ 1/ \epsilon_2)  A^2( C_{\epsilon_0}  + \tilde{\lambda}_1)^2 K.  
\end{align*}Keep in mind that $K = 1 + \sup_X \left | \nabla u  \right |^2_{\omega}$.\bigskip

On the other hand, by (\ref{eq:3.12}), at the maximum point, we have
\begin{align*}
\label{eq:3.38}
u_{i \bar{i} r} = - \frac{\partial \chi_{i \bar{i}}}{\partial z_r} + (1 + \tilde{\lambda}_i^2) \frac{\partial \tilde{h}}{\partial z_r}. \tag{3.38}
\end{align*}

Furthermore, by \hyperlink{P:2.1}{Proposition 2.1}, we may assume that $\tilde{\lambda}_1 = \lambda_1$ is sufficiently large, otherwise we get the $C^2$ estimate and we are done. Thus $\tilde{\lambda}_i$ satisfy the inequality $\sum_i \frac{\chi_{i \bar{i}} - \tilde{\lambda}_i }{1 + \tilde{\lambda}_i^2} \geq \kappa \sum_i \frac{1}{1 + \tilde{\lambda}_i^2}$. By our choice of the constant matrix $B$, we get $\sum_i \frac{\chi_{i \bar{i}} - \tilde{\lambda}_i - B_{ii}}{1 + \tilde{\lambda}_i^2} \geq \frac{\kappa}{2} \sum_i \frac{1}{1 + \tilde{\lambda}_i^2}$. Then we may choose $A > 0$ sufficiently large such that 
\begin{align*}
\label{eq:3.39}
A \frac{\kappa}{1+ \tilde{\lambda}_n^2} \geq 2 \left (C_7 + C_8 +  C_9 +   C_{10} +  C_{11} \right ), \tag{3.39}
\end{align*}where the constants $C_9, C_{10}$ and $C_{11}$ will show up later. Also, if $\tilde{\lambda}_1 = \lambda_1$ is sufficiently large relative to $\chi_{1 \bar{1}}$, then 
\begin{align*}
\label{eq:3.40}
|u_{1 \bar{1}}|^2 \geq \frac{1}{2} \tilde{\lambda}_1^2. \tag{3.40}
\end{align*}

Combining (\ref{eq:3.35}), (\ref{eq:3.37}), and (\ref{eq:3.38}), we get
\begin{align*}
\label{eq:3.41}
 &\mathcal{L}_F \left ( U(x)    \right )  \tag{3.41}  \\ 
&\geq -C_7 -C_8  + \frac{A}{2}   \frac{ \kappa }{1 + \tilde{\lambda}_n^2} - \frac{2 A^2(1 + \epsilon_1)(1+ 1/\epsilon_2)   ) K}{ 1 + \tilde{\lambda}_1^2}  +\frac{1}{2} \frac{1}{1 + \tilde{\lambda}_i^2} \frac{|u_{r \bar{i}}|^2 + |u_{ri}|^2}{ 2K - |\nabla u |^2 }  \\
&\quad+  \frac{2 - (1+ \epsilon_1)(1 + \epsilon_2)}{4(1 + \tilde{\lambda}_i^2)(-2 K + | \nabla u |^2)^2} \left | u_{ir} u_{\bar{r}} + u_{i \bar{r}} u_r - \omega_{r \bar{s}, i} u_s u_{\bar{r}}   \right |^2 \\
&\quad-   \frac{1}{1 + \tilde{\lambda}_i^2} \frac{1}{ ( -2K + |\nabla u |^2 )} \Re \left (   \left ( - \frac{\partial \chi_{i \bar{i}}}{\partial z_{\bar{r}}} + (1 + \tilde{\lambda}_i^2) \frac{\partial h}{\partial z_{\bar{r}}}   \right ) u_r    \right) \\
&\quad+  \frac{1}{1 + \tilde{\lambda}_i^2} \frac{1}{ ( -2K + |\nabla u |^2 )} \left ( \Re \left ( \omega_{s \bar{r}, i} u_{r \bar{i}} u_{\bar{s}} \right )   + \Re \left (\omega_{s \bar{r}, \bar{i}} u_{r {i}} u_{\bar{s}} \right)  \right)      \\
&\geq -C_7 -C_8  + \frac{A}{2}   \frac{ \kappa }{1 + \tilde{\lambda}_n^2} - \frac{8 A^2  K}{ 1 + \tilde{\lambda}_1^2}  + \frac{1}{2} \frac{1}{1 + \tilde{\lambda}_1^2} \frac{|u_{1 \bar{1}}|^2  }{ 2K - |\nabla u |^2 } \\
&\quad+   \frac{1}{1 + \tilde{\lambda}_i^2} \frac{1}{ ( 2K - |\nabla u |^2 )} \Re \left (   \left ( - \frac{\partial \chi_{i \bar{i}}}{\partial z_{\bar{r}}} + (1 + \tilde{\lambda}_i^2) \frac{\partial h}{\partial z_{\bar{r}}}   \right ) u_r    \right) \\
&\quad+  \frac{1}{1 + \tilde{\lambda}_i^2} \frac{1}{ ( -2K + |\nabla u |^2 )} \left ( \Re \left ( \omega_{s \bar{r}, i} u_{r \bar{i}} u_{\bar{s}} \right )   + \Re \left (\omega_{s \bar{r}, \bar{i}} u_{r {i}} u_{\bar{s}} \right)  \right)   + \frac{1}{2} \frac{1}{1 + \tilde{\lambda}_i^2} \frac{   |u_{ri}|^2}{ 2K - |\nabla u |^2 }.          
\end{align*}

The last inequality holds by picking $\epsilon_1 = \frac{1}{3}, \epsilon_2 = \frac{1}{2}$ and dropping off some positive terms.

Note again that by Cauchy--Schwartz inequality, we have the following inequality 

\begin{align*}
\label{eq:3.42}
&\frac{1}{1 + \tilde{\lambda}_i^2} \frac{1}{ ( 2K - |\nabla u |^2 )} \Re \left (   \left ( - \frac{\partial \chi_{i \bar{i}}}{\partial z_{\bar{r}}} + (1 + \tilde{\lambda}_i^2) \frac{\partial h}{\partial z_{\bar{r}}}   \right ) u_r    \right) \tag{3.42} \\
&\geq -\frac{1}{2} \frac{1}{2K - |\nabla u|^2} \left (  \frac{1}{(1 + \tilde{\lambda}_i^2)^2} \left | - \frac{\partial \chi_{i \bar{i}}}{\partial z_r} + (1 + \tilde{\lambda}_i^2) \frac{\partial h}{\partial z_r}  \right |^2 +  \left | u_r \right |^2 \right )   \\
&\geq -\frac{1}{2} \frac{1}{2K - |\nabla u|^2} \left (  \frac{1}{(1 + \tilde{\lambda}_i^2)^2}  C_9 (1 + \tilde{\lambda}_i^2)^2 +  \left | u_r \right |^2 \right )  \geq -\frac{1}{2} \frac{C_9 K}{2K - |\nabla u|^2}  \geq -  C_9. 
\end{align*}
In addition,

\begin{align*}
\label{eq:3.43}
&\frac{1}{1 + \tilde{\lambda}_i^2} \frac{1}{ ( -2K + |\nabla u |^2 )}   \Re \left (\omega_{s \bar{r}, \bar{i}} u_{r {i}} u_{\bar{s}} \right)  \tag{3.43} \\
&\geq  \frac{1}{1 + \tilde{\lambda}_i^2} \frac{1}{ ( -2K + |\nabla u |^2 )}  \left ( \frac{1}{2} \left | u_{ri} \right |^2  + \frac{1}{2} \left |  \omega_{s \bar{r}, \bar{i}} u_{\bar{s}}  \right |^2       \right ) \\
&\geq  \frac{1}{1 + \tilde{\lambda}_i^2} \frac{1}{ ( -2K + |\nabla u |^2 )}  \left ( \frac{1}{2} \left | u_{ri} \right |^2  + C_{10} K      \right )  \geq  \frac{1}{2} \frac{1}{1 + \tilde{\lambda}_i^2} \frac{1}{ ( -2K + |\nabla u |^2 )}    \left | u_{ri} \right |^2   - C_{10},
\end{align*}and

\begin{align*}
\label{eq:3.44}
&\frac{1}{1 + \tilde{\lambda}_i^2} \frac{1}{ ( -2K + |\nabla u |^2 )} \Re \left ( \omega_{s \bar{r}, i} u_{r \bar{i}} u_{\bar{s}} \right ) \tag{3.44} \\
&= \frac{1}{1 + \tilde{\lambda}_i^2} \frac{1}{ ( -2K + |\nabla u |^2 )}  \Re \left ( \omega_{s \bar{r}, i} \left ( \tilde{\lambda}_i \delta_{ri} + B_{ii} \delta_{ri} - \chi_{r \bar{i}}    \right ) u_{\bar{s}} \right ) 
\geq - C_{11}. 
\end{align*}

Eventually, by combining (\ref{eq:3.39}), (\ref{eq:3.40}), (\ref{eq:3.41}), (\ref{eq:3.42}), (\ref{eq:3.43}), and (\ref{eq:3.44}), at the maximum point, we get

\begin{align*}
\label{eq:3.45}
0 &\geq \mathcal{L}_F \left ( U(x)     \right ) \tag{3.45}  \\
&\geq  -C_7 -C_{8} - C_9 - C_{10} - C_{11}    + \frac{A}{2}   \frac{ \kappa }{1 + \tilde{\lambda}_n^2} - \frac{8 A^2   K}{(1 + \tilde{\lambda}_1^2)}  + \frac{1}{2} \frac{1}{1 + \tilde{\lambda}_1^2} \frac{|u_{1 \bar{1}}|^2  }{ 2K - |\nabla u |^2 }  \\
&\geq     -  \frac{8 A^2   K}{1 + \tilde{\lambda}_1^2} +   \frac{\tilde{\lambda}_1^2}{ 8K (1+ \tilde{\lambda}_1^2)}.
\end{align*}At the maximum point, we get $8AK \geq \tilde{\lambda_1} = \lambda_1$. Thus we finish our proof.

\end{proof}

\subsection{\texorpdfstring{$C^1$}{C1} Estimate of Deformed Hermitian--Yang--Mills Equation}
In the following, we give a direct proof for the $C^1$ estimate using the maximal principle instead of the blowup argument. We follow the idea by Collins--Yau \cite{collins2018moment}. Consider the following test function
\begin{align*}
Q(x) \coloneqq \log \left ( 1 + \left | \nabla u \right |^2_\omega   \right ) + \gamma(u), 
\end{align*}where $\gamma(u)$ will be determined later.\bigskip

Let us apply the maximum principle to the above test function $Q(x)$, at the maximum point $x_0$ of $Q(x)$. Similar to the \hyperlink{tilde Lambda}{perturbation} before, we perturb the eigenvalues at the maximum point to make them all distinct. Then we have
\begin{align*}
\label{eq:3.46}
0 \geq \mathcal{L}_F  Q(x_0) &= \mathcal{L}_F \left ( \log \left ( 1 + \left | \nabla u \right |^2_\omega   \right ) + \gamma(u) \right ) \tag{3.46} \\ 
&= \frac{\pa F  }{\pa \tilde{\Lambda}^k_i } \left (  {\Lambda}  \right )  \omega^{k \bar{j}}  \frac{\pa^2}{\pa z_i\pa\bar z_j}  \left ( \log \left ( 1 + \left | \nabla u \right |^2_\omega   \right ) + \gamma(u) \right ) \\
&= \frac{1}{1+ \tilde{\lambda}_i^2} \left (  \gamma''(u) | u_i |^2 + \gamma'(u) u_{i \bar{i}} \right )\\ 
&\quad- \frac{1}{1 +  \tilde{\lambda}_i^2} \frac{1}{( 1 + | \nabla u |^2)^2} \left | \sum_r \left ( u_{ir} u_{\bar{r}} + u_{i \bar{r}} u_r - \sum_s \omega_{r \bar{s}, i} u_s u_{\bar{r}} \right )   \right |^2   \\
&\quad+  \frac{2}{1 +  \tilde{\lambda}_i^2} \frac{1}{ (  1 + |\nabla u |^2 )} \Re \left ( u_{r \bar{i} i} u_{\bar{r}}    \right)  - \frac{1}{1 +  \tilde{\lambda}_i^2} \frac{1}{ (  1 + |\nabla u |^2 )}    \omega_{s \bar{r}, \bar{i} i}     u_r u_{\bar{s}}      \\
&\quad+ \sum_{i, r} \frac{1}{1 +  \tilde{\lambda}_i^2} \frac{1}{ (  1 + |\nabla u |^2 )} \left (  \left | u_{r \bar{i}} - \sum_s \omega_{r \bar{s}, \bar{i}} u_s  \right |^2 + \left | u_{r i} - \sum_s \omega_{r \bar{s}, i} u_s  \right |^2    \right).
\end{align*}To estimate the term (\ref{eq:3.47}), we substitute $S_i \coloneqq \sum_r u_{ir} u_{\bar{r}}, T_i \coloneqq \sum_r u_{i \bar{r}} u_r,$ and $R_i \coloneqq \sum_{r, s} \omega_{r \bar{s}, i} u_s u_{\bar{r}}$ to make it easy to see. Then we have
\begin{align*}
\label{eq:3.47}
&\left | \sum_r \left ( u_{ir} u_{\bar{r}} + u_{i \bar{r}} u_r - \sum_s \omega_{r \bar{s}, i} u_s u_{\bar{r}} \right )  \right |^2 \tag{3.47} \\
&= \left | S_i + T_i - R_i \right |^2 = 2 \Re \left ( (S_i + T_i - R_i) \bar{T_i}   \right ) + |S_i - R_i|^2 - |T_i|^2 \\
&\leq 2 \Re \left ( (S_i + T_i - R_i) \bar{T_i}   \right ) + |S_i - R_i|^2.  
\end{align*}Also, by Cauchy-Schwartz inequality, we have
\begin{align*}
\label{eq:3.48}
|S_i - R_i|^2 &= \left |  \sum_r \left (  u_{ir} u_{\bar{r}} - \sum_s \omega_{r \bar{s},i} u_s  u_{\bar{r}}   \right )    \right |^2 \leq \left (  \sum_r  |u_{\bar{r}}|^2  \right ) \cdot \left (  \sum_r  \left | u_{ir} - \omega_{r \bar{s}} u_s \right |^2  \right ) \tag{3.48} \\
&= \left | \nabla u   \right |^2 \cdot \left (  \sum_r  \left | u_{ir} - \omega_{r \bar{s}} u_s \right |^2  \right ).
\end{align*}

Hence, by dropping off some positive terms and by combing equations (\ref{eq:3.46}), (\ref{eq:3.47}), and (\ref{eq:3.48}), we obtain
\begin{align*}
\label{eq:3.49}
0 &\geq \mathcal{L}_F  Q(x_0) \tag{3.49} \\ 
&\geq \frac{1}{1+ \tilde{\lambda}_i^2} \left (  \gamma''(u) | u_i |^2 + \gamma'(u) u_{i \bar{i}} \right )\\ 
&\quad- \frac{2}{1 +  \tilde{\lambda}_i^2} \frac{1}{( 1 + | \nabla u |^2)^2} \Re \left ( \left (\sum_r \left ( u_{ir} u_{\bar{r}} + u_{i \bar{r}} u_r - \sum_s \omega_{r \bar{s}, i} u_s u_{\bar{r}} \right ) \right ) \cdot \left ( \sum_p u_{p \bar{i}} u_{\bar{p}}   \right )   \right )   \\
&\quad+  \frac{2}{1 +  \tilde{\lambda}_i^2} \frac{1}{ (  1 + |\nabla u |^2 )} \Re \left ( u_{r \bar{i} i} u_{\bar{r}}    \right)  - \frac{1}{1 +  \tilde{\lambda}_i^2} \frac{1}{ (  1 + |\nabla u |^2 )}    \omega_{s \bar{r}, \bar{i} i}     u_r u_{\bar{s}}.      
\end{align*}Now, at the maximum point $x_0$ of $Q(x)$, we have
\begin{align*}
0 = \frac{\partial Q}{\partial z_i} (x_0) &= \frac{1}{1+ |\nabla u|^2} \left (  u_{ir} u_{\bar{r}} + u_{i \bar{r}} u_r - \omega_{r \bar{s}, i} u_s u_{\bar{r}}  \right ) - \gamma'(u) u_i,
\end{align*}which implies that at $x_0$,
\begin{align*}
\label{eq:3.50}
u_{ir} u_{\bar{r}} + u_{i \bar{r}} u_r - \omega_{r \bar{s}, i} u_s u_{\bar{r}} = \gamma'(u) u_i \left ( 1 + | \nabla u |^2 \right ).\tag{3.50}
\end{align*}Also, by the equation (\ref{eq:3.12}), we have
\begin{align*}
u_{i \bar{i}r} &= - \frac{\partial \chi_{i \bar{i}}}{\partial z_r} + (1 + \tilde{\lambda}_i^2) \frac{\partial \tilde{h}}{\partial z_r},
\end{align*}which implies that,
\begin{align*}
\label{eq:3.51}
\frac{2 \Re \left ( u_{r \bar{i} i} u_{\bar{r}}    \right) }{(1 +  \tilde{\lambda}_i^2) (1 + |\nabla u |^2) }  &= \frac{2 \Re \left ( \left (  - \chi_{i\bar{i},r} + (1 + \tilde{\lambda}_i^2)h_r  \right ) u_{\bar{r}}    \right) }{ (1 +  \tilde{\lambda}_i^2)( 1 + |\nabla u |^2 )}  \geq -C_1 \frac{|\nabla u|}{1 + |\nabla u|^2}. \tag{3.51}
\end{align*}For the last term, we have
\begin{align*}
\label{eq:3.52}
- \frac{\omega_{s \bar{r}, \bar{i} i}     u_r u_{\bar{s}} }{(1 +  \tilde{\lambda}_i^2) (  1 + |\nabla u |^2 )}   &\geq   -C_2. \tag{3.52}
\end{align*}


By combining equations (\ref{eq:3.49}), (\ref{eq:3.50}), (\ref{eq:3.51}), and (\ref{eq:3.52}), and dropping off some positive terms, we get
\begin{align*}
\label{eq:3.53}
0 &\geq -C_2 - C_1 \frac{|\nabla u|}{1 + |\nabla u|^2}  +  \frac{  \gamma''(u) | u_i |^2 + \gamma'(u) u_{i \bar{i}} }{1 + \tilde{\lambda}_i^2}  - \frac{2 \gamma'(u) \Re \left (  u_i u_{p \bar{i}} u_{\bar{p}}   \right )}{(1+ \tilde{\lambda}_i^2)(1+ |\nabla u|^2)} \tag{3.53} \\
&\geq  -C_2 - C_1 \frac{|\nabla u|}{1 + |\nabla u|^2}  +  \frac{  \gamma''(u) | u_i |^2 + \gamma'(u) u_{i \bar{i}} }{1 + \tilde{\lambda}_i^2}  - \frac{2 \gamma'(u) \Re \left (  u_i u_{p \bar{i}} u_{\bar{p}}   \right )}{(1+ \tilde{\lambda}_i^2)(1+ |\nabla u|^2) }. 
\end{align*}

Following Phong--Sturm \cite{phong2009dirichlet, phong2010regularity}, we pick $\gamma(u) \coloneqq -Bu + \frac{1}{ u -\inf_X u +1}$. The rest follows directly from Collins--Yau \cite{collins2018moment}.

\hypertarget{S:4}{\section{Existence Theorem for dHYM Equation}}

We consider the following constant 
\begin{align*}
\label{eq:4.1}
\hat{\Theta}_\omega \left ( \chi  \right ) &= \Arg \left (  \int_X \left (  \omega + \sqrt{-1} \chi \right )^n   \right ), \tag{4.1}
\end{align*}where $\chi \in [\chi_0 ] \in H^{1,1}_{\text{BC}}(X; \mathbb{R})$. Note that we need to specify the branch cut such that $\hat{\Theta}_\omega \left ( 0 \right) = 0$.

\hypertarget{T:4.1}{\begin{ftheorem}[Existence Theorem]}
Assume the Hermitian metric $\omega$ satisfies $\partial \bar{\partial} \omega = 0 = \partial \bar{\partial} \left ( \omega^2 \right )$. Also, suppose that there exists a $C$-subsolution $\ubar{\chi} \coloneqq \chi_0 + \sqrt{-1} \partial \bar{\partial} \ubar{u}$ such that 
\begin{align*}
 \Theta_{\omega} \left ( \ubar{\chi} \right ) > (n-2)\frac{\pi}{2}.
\end{align*}Then there exists a unique smooth $(1,1)$-form $\chi \in [\chi_0]$ solving the deformed Hermitian--Yang--Mills equation
\begin{align*}
\Theta_\omega \left ( \chi \right ) = \hat{\Theta}_\omega \left ( \ubar{\chi} \right ) = \hat{\Theta}_\omega \left ( \chi_0 \right ).
\end{align*}
\end{ftheorem}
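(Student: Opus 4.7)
The plan is to use the continuity method. I define a one-parameter family of equations by setting
\begin{align*}
h_t(x) \coloneqq (1-t)\,\Theta_\omega(\ubar{\chi})(x) + t\,\hat{\Theta}_\omega(\chi_0)
\end{align*}
for $t \in [0,1]$, and consider the family $\Theta_\omega(\ubar{\chi} + \sqrt{-1}\partial\bar{\partial} u_t) = h_t(x)$. At $t=0$ the function $u_0 \equiv 0$ solves the equation; at $t=1$ a solution $u_1$ yields the desired $\chi = \ubar{\chi} + \sqrt{-1}\partial\bar{\partial} u_1 \in [\chi_0]$ with $\Theta_\omega(\chi) = \hat{\Theta}_\omega(\chi_0)$. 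Let $T \subset [0,1]$ be the set of $t$ for which a smooth solution exists, normalized by $\sup_X u_t = 0$. The goal is to show $T$ is nonempty, open, and closed, hence $T = [0,1]$.\bigskip

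Before starting, I would address the cohomological invariance: the hypothesis $\partial\bar{\partial}\omega = 0 = \partial\bar{\partial}(\omega^2)$, together with its consequence $\partial\bar{\partial}(\omega^k) = 0$ for all $1 \leq k \leq n-1$ (cf.~\cite{fino2011astheno}), guarantees through integration by parts on $X$ that the complex number $\int_X(\omega + \sqrt{-1}\chi)^n$ depends only on $[\chi] \in H^{1,1}_{\text{BC}}(X;\mathbb{R})$. In particular $\hat{\Theta}_\omega(\ubar{\chi}) = \hat{\Theta}_\omega(\chi_0)$, so the target constant phase is well-defined. I then verify that $\ubar{\chi}$ remains a $C$-subsolution of $h_t$ throughout the path: since $\ubar{\chi}$ trivially solves (and so is a $C$-subsolution of) $h_0$, and is a $C$-subsolution of $h_1 = \hat{\Theta}_\omega(\chi_0)$ by hypothesis, \hyperlink{L:2.5}{Lemma 2.5 (ii)} gives the $C$-subsolution property for $\max(h_0,h_1)$; the pointwise monotonicity in the Lemma 2.4 criterion yields the claim because $h_t \leq \max(h_0,h_1)$, and the supercritical constraint $h_t > (n-2)\pi/2 + \epsilon_0$ is preserved by convexity.\bigskip

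Openness proceeds via the implicit function theorem. The linearization at a solution $u_t$ is the second-order elliptic operator $L_{u_t}v = \eta^{i\bar{j}} v_{i\bar{j}}$ from (\ref{eq:3.3}), whose kernel on $C^{2,\alpha}(X)$ is the constants by the maximum principle. As a scalar real Fredholm operator of index zero on a compact manifold, its cokernel is also one-dimensional. I would work modulo constants by normalizing $\sup u_t = 0$ and solve the linearized equation modulo an additive constant in the target (equivalently, introduce a Lagrange multiplier) so that the combined operator is surjective. The cohomological invariance forces the additive constant to vanish along the path, closing the scheme and producing genuine solutions. Closedness then follows from the a priori estimates of \hyperlink{T:1.1}{Theorem 1.1}, which give $\|u_t\|_{C^{2,\alpha}} \leq C$ uniformly in $t$; bootstrapping via Schauder theory produces $C^\infty$ bounds, and Arzelà--Ascoli extracts a convergent subsequence, showing $T$ is closed.\bigskip

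Uniqueness follows by a standard comparison argument: if $\chi_1, \chi_2 \in [\chi_0]$ both solve $\Theta_\omega(\chi) = \hat{\Theta}_\omega(\chi_0)$, writing $\chi_j = \chi_0 + \sqrt{-1}\partial\bar{\partial} u_j$ and integrating the linearization along the segment from $u_1$ to $u_2$ produces a linear elliptic equation with no zeroth-order term for $w = u_1 - u_2$; the maximum principle forces $w$ to be constant, so $\chi_1 = \chi_2$. I expect the main obstacle to be the openness step: the non-Kähler nature of $\omega$ injects torsion into the formal $L^2$-adjoint of $L_{u_t}$, so identifying the cokernel precisely and synchronizing its annihilator with the cohomological invariance supplied by $\partial\bar{\partial}\omega = 0 = \partial\bar{\partial}(\omega^2)$ is the delicate point. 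Once this alignment is in place, the continuity method runs parallel to the Kähler argument of Collins--Jacob--Yau \cite{collins20151}.
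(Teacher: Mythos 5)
Your continuity path has a genuine gap at the point you yourself flag as delicate: the constant added to the target. Along the path $\Theta_\omega(\ubar{\chi} + \sqrt{-1}\partial\bar{\partial} u_t) = h_t(x) + c_t$ one must introduce an additive constant $c_t$ to make the linearization surjective (its cokernel is one-dimensional). You assert that ``the cohomological invariance forces the additive constant to vanish along the path.'' This is false for $0 < t < 1$: the invariance of $\hat{\Theta}_\omega$ under $\partial\bar{\partial}$-perturbations constrains only the $\Arg$ of $\int_X (\omega + \sqrt{-1}\chi_t)^n$, and at intermediate $t$ the target $h_t$ is a non-constant function, so $\Arg\int e^{i(h_t+c_t)} r_t \,\omega^n = \hat{\Theta}_\omega$ does not pin down $c_t$; the constant is only forced to be zero at $t=1$, where the target is the cohomological constant itself. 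Without a sign and magnitude bound on $c_t$, you cannot verify that $\ubar{\chi}$ remains a $C$-subsolution of $h_t + c_t$ (your Lemma 2.4 check only handles $h_t$, and a positive $c_t$ can break it), nor that $h_t + c_t$ remains supercritical (a negative $c_t$ can break that). In fact, the maximum principle only gives $c_t \leq t\sup_X(\Theta_\omega(\ubar{\chi}) - \hat{\Theta}_\omega(\chi_0))$, whose sign is not controlled by your hypotheses, so the argument does not close.

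This is precisely why the paper does not use the single linear path you propose. It instead runs a two-stage continuity method: first a path from $\Theta_0 = \Theta_\omega(\ubar{\chi})$ to a carefully chosen smooth auxiliary function $\Theta_1$ that is pointwise $\geq \max\{\Theta_0, \hat{\Theta}_\omega(\ubar{\chi})\}$ (Proposition 4.1), then a second path from $\Theta_1$ down to the constant $\hat{\Theta}_\omega(\ubar{\chi})$. The point of forcing $\Theta_1$ to dominate the endpoint data is exactly to make $b_t \leq 0$ (resp.\ $c_t \leq 0$) by the maximum principle, so the target stays $\leq \Theta_1$ and the $C$-subsolution property is preserved; meanwhile the second, local requirement on $\Theta_1$ near the minimum of $\Theta_0$ is engineered to give a lower bound $c_t \geq b_1$ and hence keep the phase supercritical. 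These two constraints on the path are the crux of the existence proof, and your proposal has no substitute for them. The $\partial\bar{\partial}\omega = \partial\bar{\partial}(\omega^2) = 0$ hypothesis enters only to make $\hat{\Theta}_\omega$ well-defined on the Bott--Chern class (Lemma 4.1) and to force $c_1 = 0$ at the endpoint; it plays no role in controlling $c_t$ at intermediate $t$. Your uniqueness argument, by contrast, matches the paper's and is fine.
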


\hypertarget{L:4.1}{\begin{flemma}}
Assume the Hermitian metric $\omega$ satisfies $\partial \bar{\partial} \omega = 0 = \partial \bar{\partial} \left ( \omega^2 \right )$. Then for any closed real (1,1)-form $\chi \in [\chi_0]$, we have
\begin{align*}
\label{eq:4.2}
\hat{\Theta}_\omega \left ( \chi \right ) = \hat{\Theta}_\omega \left ( \chi_0 \right ). \tag{4.2}
\end{align*}
\end{flemma}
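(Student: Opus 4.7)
My plan is to reduce the claim to showing that the scalar $I(\chi) := \int_X (\omega + \sqrt{-1}\chi)^n$ depends only on the Bott--Chern class $[\chi]$; continuity of this integral in $\chi$ then propagates the branch cut consistently. By the $\partial \bar{\partial}$-lemma at the Bott--Chern level, write $\chi = \chi_0 + \sqrt{-1}\partial\bar{\partial}\phi$ for some smooth real function $\phi$, and introduce the straight-line path $\chi_t := \chi_0 + t\sqrt{-1}\partial\bar{\partial}\phi$ for $t \in [0,1]$. I will show $\frac{d}{dt}I(\chi_t) \equiv 0$, which gives $I(\chi) = I(\chi_0)$ and hence $\hat{\Theta}_\omega(\chi) = \hat{\Theta}_\omega(\chi_0)$.

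Differentiating under the integral and applying Stokes' theorem twice (no Kähler assumption needed, since $X$ is compact without boundary), one obtains
\begin{align*}
\frac{d}{dt}I(\chi_t) = -n \int_X \partial\bar{\partial}\phi \wedge (\omega + \sqrt{-1}\chi_t)^{n-1} = -n \int_X \phi \cdot \partial\bar{\partial}\bigl[(\omega + \sqrt{-1}\chi_t)^{n-1}\bigr].
\end{align*}
So the task reduces to proving $\partial\bar{\partial}\bigl[(\omega + \sqrt{-1}\chi_t)^{n-1}\bigr] = 0$. Since $\chi_0$ is $d$-closed and $\sqrt{-1}\partial\bar{\partial}\phi$ is exact, each power $\chi_t^k$ is $d$-closed, and the graded Leibniz rule gives $\partial\bar{\partial}(\omega^a \wedge \chi_t^b) = \partial\bar{\partial}(\omega^a) \wedge \chi_t^b$. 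After binomial expansion, the whole matter comes down to verifying $\partial\bar{\partial}\omega^a = 0$ for $1 \le a \le n-1$.

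This final algebraic step is the main technical content, and is precisely where both hypotheses on $\omega$ are used. A direct computation yields $\partial\bar{\partial}\omega^2 = 2\,\partial\omega \wedge \bar{\partial}\omega$, so the hypothesis $\partial\bar{\partial}(\omega^2) = 0$ is equivalent to $\partial\omega \wedge \bar{\partial}\omega = 0$. Combined with $\partial\bar{\partial}\omega = 0$, the identity
\begin{align*}
\partial\bar{\partial}\omega^a = a(a-1)\,\omega^{a-2}\wedge \partial\omega \wedge \bar{\partial}\omega + a\,\omega^{a-1}\wedge \partial\bar{\partial}\omega
\end{align*}
shows $\partial\bar{\partial}\omega^a = 0$ for every $1 \le a \le n-1$. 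The only real obstacle is this algebraic propagation (already recorded in \cite{fino2011astheno}); once it is in hand, the lemma follows from the path-integration argument above, with the constancy of $I(\chi_t)$ upgrading to constancy of $\Arg(I(\chi_t))$ by the continuous-branch choice.
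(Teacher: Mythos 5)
Your proof is correct and follows essentially the same route as the paper's: both integrate by parts via Stokes' theorem twice to move the $\partial\bar\partial$ derivatives from the exact part $\sqrt{-1}\partial\bar\partial\phi$ onto powers of $\omega+\sqrt{-1}\chi$, and then conclude using $\partial\bar\partial\omega=0$ together with $\partial\omega\wedge\bar\partial\omega=\tfrac{1}{2}\partial\bar\partial(\omega^2)=0$ to kill the remaining terms. The only cosmetic difference is that the paper expands $(\omega+\sqrt{-1}\chi)^n$ binomially and handles each nontrivial term directly, while you differentiate along the straight-line path $\chi_t$ first and appeal to the general identity $\partial\bar\partial(\omega^a)=0$ for $1\le a\le n-1$ — the same algebra, repackaged.
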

\begin{proof}
Now, since $\chi \in [ \chi_0 ]$, we can write $\chi = \chi_0 + \sqrt{-1} \partial \bar{\partial} v$. Thus
\begin{align*}
\hat{\Theta}_\omega \left (   \chi \right ) &= \Arg \int_X \left (  \omega + \sqrt{-1} \chi   \right )^n = \Arg \int_X \left (  \left ( \omega + \sqrt{-1} \chi_0 \right ) - \partial \bar{\partial} v   \right )^n \\
&= \Arg  \int_X  \sum_{i =0}^n (-1)^i  \binom{n}{i} \left (  \partial \bar{\partial} v    \right )^i \wedge  \left (  \omega + \sqrt{-1} \chi_0   \right )^{n-i} \\
&= \Arg  \int_X   \left (  \left (  \omega + \sqrt{-1} \chi_0   \right )^{n} +   \sum_{i =1}^{n} (-1)^i  \binom{n}{i}  \left (  \partial \bar{\partial} v    \right )^i \wedge  \left (  \omega + \sqrt{-1} \chi_0   \right )^{n-i}   \right ).
\end{align*}By Stoke's Theorem, for $1 \leq i \leq n$, we have

\begin{align*}
&\int_X  \left (  \partial \bar{\partial} v    \right )^i \wedge  \left (  \omega + \sqrt{-1} \chi_0   \right )^{n-i}   \\
&=  \int_X   \bar{\partial} v \wedge  \left (  \partial \bar{\partial} v    \right )^{i-1} \wedge  \partial \left ( \left (  \omega + \sqrt{-1} \chi_0   \right )^{n-i}    \right   )       \\
&= \int_X     v   \left (  \partial \bar{\partial} v    \right )^{i-1} \wedge  \partial \bar{\partial} \left ( \left (  \omega + \sqrt{-1} \chi_0   \right )^{n-i}    \right   )   \\
&= \int_X      v   \left (  \partial \bar{\partial} v    \right )^{i-1} \wedge   \frac{(n-i) (n-1-i)}{2}  \partial \bar{\partial} \left ( \omega^2 \right )   \wedge  \left (  \omega + \sqrt{-1} \chi_0   \right )^{n - 2 - i}   = 0. 
\end{align*}In summary, we have
\begin{align*}
\hat{\Theta}_\omega \left ( \chi \right ) &= \Arg \int_X \left (  \omega + \sqrt{-1} \chi \right )^n = \Arg \int_X \left (  \omega + \sqrt{-1} \chi_0 \right )^n.
\end{align*}

\end{proof}

\hypertarget{L:4.2}{\begin{flemma}[\cite{collins20151}]}
Fix $k \geq 2, \gamma \in (0, 1)$ and suppose that we have $C^{k-2, \gamma}$ functions $H_0, H_1\colon X \rightarrow \mathbb{R}$ and a $C^{k, \gamma}$ function $u\colon X \rightarrow \mathbb{B}$ such that
\begin{align*}
\Theta_\omega \left (  \chi_0 + \sqrt{-1} \partial \bar{\partial} u \right ) &= H_0.
\end{align*}Consider the family of equations 
\begin{align*}
\label{eq:4.3}
\Theta_\omega \left ( \chi_0 + \sqrt{-1} \partial \bar{\partial} u_t \right ) &= (1- t) H_0 + t H_1 + c_t, \tag{4.3}
\end{align*}where $c_t$ is a constant. Then there exists $\epsilon > 0$ such that, for any $|t| < \epsilon$, there exists an unique pair $(u_t, c_t) \in C^{k, \gamma} \times \mathbb{R}$ solving (\ref{eq:4.3}). Furthermore, if $H_0, H_1$ are smooth, then so is $u_t$.
\end{flemma}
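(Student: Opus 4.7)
The plan is to apply the Banach-space implicit function theorem. After translating by a constant (which leaves $\partial\bar{\partial}u$ unchanged), assume $\int_X u\,\omega^n = 0$, and set $C^{k,\gamma}_0 := \{v \in C^{k,\gamma}(X) : \int_X v\,\omega^n = 0\}$. Define
\[
\Psi : C^{k,\gamma}_0 \times \mathbb{R} \times (-1,1) \longrightarrow C^{k-2,\gamma}(X), \quad \Psi(v,c,t) := \Theta_\omega\bigl(\chi_0 + \sqrt{-1}\,\partial\bar{\partial}v\bigr) - (1-t)H_0 - tH_1 - c.
\]
By hypothesis $\Psi(u,0,0) = 0$, so the task reduces to inverting the partial derivative $D_{(v,c)}\Psi|_{(u,0,0)}$.

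By \hyperlink{L:3.3}{Lemma 3.3}, this derivative sends $(w,k)$ to $Lw - k$, where $Lw := \eta^{j\bar i}(\hat\chi)\,w_{i\bar j}$ with $\hat\chi = \chi_0 + \sqrt{-1}\,\partial\bar{\partial}u$ and $\eta^{j\bar i}(\hat\chi) = \bigl((I+\Lambda^2)^{-1}\bigr)^i_{\ j}$. Thus $L$ is a second-order strictly elliptic linear operator with positive-definite leading coefficient and no zeroth-order term, hence is Fredholm of index zero between $C^{k,\gamma}(X)$ and $C^{k-2,\gamma}(X)$ by classical Schauder/Fredholm theory. It is precisely this zeroth-order vanishing that forces us to introduce the auxiliary real parameter $c$.

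The central step is to show $(w,k) \mapsto Lw - k$ is an isomorphism from $C^{k,\gamma}_0 \times \mathbb{R}$ onto $C^{k-2,\gamma}$, and this is where the main substance lies. For injectivity, suppose $Lw = k$ with $w \in C^{k,\gamma}_0$. At a maximum point of $w$ one has $(w_{i\bar j}) \leq 0$, so $Lw \leq 0$ there, forcing $k \leq 0$; symmetrically at a minimum point $k \geq 0$. Hence $k = 0$ and $Lw = 0$, which by the strong maximum principle makes $w$ constant, and the mean-zero constraint forces $w = 0$. For surjectivity, the same maximum-principle argument shows the constant function $1$ is not in $\mathrm{Im}(L)$; since $L$ has index zero with $\ker L = \mathbb{R}$, the image has codimension exactly one, so $\mathrm{Im}(L) \oplus \mathbb{R}\cdot 1 = C^{k-2,\gamma}$. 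The slightly delicate part is confirming that restricting the domain to $C^{k,\gamma}_0$ together with the extra $\mathbb{R}$-slot exactly pins down a bijection; this is what the combination of index-zero Fredholm theory and the maximum principle accomplishes.

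The implicit function theorem then produces $\epsilon > 0$ and a unique $C^1$ curve $t \mapsto (u_t,c_t) \in C^{k,\gamma}_0 \times \mathbb{R}$ solving $\Psi(u_t,c_t,t) = 0$ for $|t| < \epsilon$, with $(u_0,c_0) = (u,0)$; uniqueness in the statement is read modulo the mean-zero normalization on $u_t$. For the final assertion, if $H_0, H_1 \in C^\infty$, then each $u_t$ solves a uniformly elliptic equation (the ellipticity at $u$ persists on a $C^{k,\gamma}$-neighborhood) with smooth right-hand side, and a standard Schauder bootstrap on (\ref{eq:4.3}) gives $u_t \in C^\infty$.
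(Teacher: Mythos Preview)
The paper does not supply its own proof of this lemma; it is stated with a citation to \cite{collins20151} and used as a black box for the openness step in the continuity method. Your argument via the implicit function theorem is correct and is the standard route (and almost certainly the one in the cited reference): normalize to mean zero, compute the linearization using \hyperlink{L:3.3}{Lemma 3.3} to obtain the elliptic operator $L = \eta^{i\bar j}\partial_i\partial_{\bar j}$ with no zeroth-order term, use the strong maximum principle to identify $\ker L$ with the constants, invoke index-zero Fredholm theory to see the image has codimension one and is complemented by $\mathbb{R}\cdot 1$, and then apply the implicit function theorem and bootstrap. Nothing here is sensitive to the Hermitian (as opposed to K\"ahler) setting, since the linearization formula is pointwise and the Schauder/Fredholm machinery requires only ellipticity on a compact manifold.

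One cosmetic remark: the uniqueness assertion in the lemma should indeed be read modulo an additive constant on $u_t$ (equivalently, under your mean-zero normalization), and you flag this correctly.
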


Suppose now that we have a Hermitian metric $\omega$ satisfying the assumptions of \hyperlink{T:4.1}{Theorem 4.1} and a $C$-subsolution $\ubar{\chi}$ to the dHYM equation satisfying the assumptions of \hyperlink{T:4.1}{Theorem 4.1}. Let us define
\begin{align*}
\Theta_0 \coloneqq \Theta_\omega \left ( \ubar{\chi}  \right ).
\end{align*}Now, without loss of generality, we may assume that $\Theta_0 \neq \hat{\Theta}_\omega \left ( \ubar{\chi}  \right )$, otherwise we are done. Let $\mu_1, \dots, \mu_n$ be the eigenvalues of the Hermitian endomorphism $\omega^{-1} \ubar{\chi}$ at an arbitrary point of $X$. We have
\begin{align*}
\sum_{l \neq j} \arctan \left ( \mu_l \right ) > \Theta_0 - \frac{\pi}{2}.
\end{align*}Thus by \hyperlink{L:2.4}{Lemma 2.4}, $\ubar{\chi}$ is also a $C$-subsolution of $\sum_i \arctan(\lambda_i) = \Theta_0$. By \hyperlink{L:2.5}{Lemma 2.5, (ii)}, we see that $\ubar{\chi}$ is a $C$-subsolution of $\max \left \{ \hat{\Theta}_\omega \left ( \ubar{\chi}  \right ), \Theta_0   \right \}$. So, we can find $\delta > 0$ sufficiently small such that 
\begin{align*}
\sum_{l \neq j} \arctan \left (  \mu_l   \right ) > \max \left \{  \Theta_0, \hat{\Theta}_\omega \left (  \ubar{\chi}  \right )   \right \}  + 2 \delta - \frac{\pi}{2};\quad
\Theta_0 > (n-2) \frac{\pi}{2} + 2 \delta ;\quad
\left ( \hat{\Theta}_\omega \left ( \ubar{\chi} \right ) - \inf_X \Theta_0  \right ) > 4 \delta.
\end{align*}

By \hyperlink{L:2.5}{Lemma 2.5, (i)}, $C$-subsolution is an open condition, so we can consider a smooth function \hypertarget{T1}{$\Theta_1$} near the continuous function $ \max \left \{  \Theta_0, \hat{\Theta}_\omega \left (  \ubar{\chi}  \right )   \right \}$, where $\ubar{\chi}$ is again a $C$-subsolution of $\Theta_1$, satisfying 
\begin{itemize}
\item[1.] $\max \left \{ \Theta_0, \hat{\Theta}_\omega \left ( \ubar{\chi} \right )   \right \} \leq \Theta_1 \leq \max \left \{ \Theta_0, \hat{\Theta}_\omega \left ( \ubar{\chi} \right )   \right \} + \delta$.
\item[2.] Fix a point $p \in X$ where $\Theta_0$ achieves its minimum, we require that $\Theta_1 \equiv \hat{\Theta}_\omega \left ( \ubar{\chi} \right )$ in a neighborhood of $p \in X$.
\end{itemize}

Now, we use the function $\Theta_1$ as the first target for the method of continuity. One can see that the function $\Theta_1$ here is quite flexible. In Collins--Jacob--Yau \cite{collins20151}, they construct an explicit $\Theta_1$ by regularizing the maximum function $\max \left \{ \Theta_0, \hat{\Theta}_\omega \left ( \ubar{\chi} \right ) \right \}$; one can find the trick in Demailly \cite{demailly1997complex}.

\hypertarget{P:4.1}{\begin{fprop}}There exists a smooth function $u_1\colon X \rightarrow \mathbb{R}$ and a constant $b_1 < 0$ such that
\begin{align*}
\Theta_\omega \left (   \ubar{\chi} + \sqrt{-1} \partial \bar{\partial} u_1 \right ) = \Theta_1 + b_1 \text{ and  } \  \Theta_1 + b_1 > (n-2) \frac{\pi}{2}.
\end{align*}
\end{fprop}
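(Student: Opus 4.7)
The plan is to solve for $(u_{1}, b_{1})$ via the method of continuity along the one-parameter family
\begin{align*}
\Theta_{\omega}\!\left(\ubar{\chi} + \sqrt{-1}\, \partial\bar{\partial} u_{t}\right) = (1-t)\,\Theta_{0} + t\,\Theta_{1} + c_{t}, \qquad t \in [0,1],
\end{align*}
normalized by $\sup_{X} u_{t} = 0$. Let $T \subset [0,1]$ be the set of $t$ admitting a smooth pair $(u_{t}, c_{t})$; at $t = 0$ the pair $(0, 0)$ solves the equation since $\Theta_{\omega}(\ubar\chi) = \Theta_{0}$, so $0 \in T$, and openness of $T$ at any interior point is immediate from \hyperlink{L:4.2}{Lemma 4.2} applied to the linearization at $(u_{t_0}, c_{t_0})$.

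The crux is closedness, for which one must check along the path that (i) $\ubar\chi$ remains a $C$-subsolution of $h_{t} := (1-t)\Theta_{0} + t\Theta_{1} + c_{t}$, and (ii) $h_{t}$ stays uniformly bounded below by $(n-2)\tfrac{\pi}{2} + \epsilon_{0}$ for some $\epsilon_{0} > 0$ independent of $t$. Evaluating the equation at the extrema of $u_{t}$ and invoking Weyl monotonicity for Hermitian matrices yields $c_{t} \leq 0$ and $c_{t} \geq -t\,\|\Theta_{1} - \Theta_{0}\|_{C^{0}}$; combined with the $2\delta$-margin built into the construction of $\Theta_{1}$ and \hyperlink{L:2.5}{Lemma 2.5}, this delivers (i). For (ii), one uses $(1-t)\Theta_{0} + t\Theta_{1} \geq \Theta_{0} > (n-2)\tfrac{\pi}{2} + 2\delta$ together with the bound on $c_{t}$ above (shrinking $\delta$ in the original construction if necessary to dominate $\|\Theta_{1} - \Theta_{0}\|_{C^{0}}$). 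With (i) and (ii) in hand, \hyperlink{T:1.1}{Theorem 1.1} furnishes a uniform $C^{2,\alpha}$ bound along the path, standard Schauder bootstrapping promotes this to $C^{k,\alpha}$ for every $k$, and Arzel\`a--Ascoli yields closedness. Hence $1 \in T$, producing $u_{1}$ together with $b_{1} := c_{1} \leq 0$ and $\Theta_{1} + b_{1} > (n-2)\tfrac{\pi}{2}$.

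To upgrade $b_{1} \leq 0$ to the strict inequality $b_{1} < 0$, argue by contradiction. Suppose $b_{1} = 0$. Writing the difference of nonlinear operators as
\begin{align*}
L u_{1} := \int_{0}^{1} \frac{\partial F}{\partial \Lambda^{j}_{i}}\!\left(\omega^{-1}(\ubar\chi + s\sqrt{-1}\partial\bar\partial u_{1})\right) \omega^{k\bar{j}}(u_{1})_{i\bar{k}}\, ds = \Theta_{1} - \Theta_{0} \geq 0,
\end{align*}
$L$ is a linear second-order elliptic operator with no zeroth-order term, and is uniformly elliptic by the $C^{2}$ bound on $u_{1}$ (which bounds the eigenvalues of the family of endomorphisms above). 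The strong maximum principle forces $u_{1}$ to be constant, whence $\Theta_{1} \equiv \Theta_{0}$, and by the construction of $\Theta_{1}$ this implies $\Theta_{0} \geq \hat{\Theta}_{\omega}(\ubar\chi)$ pointwise. Since we have reduced WLOG to $\Theta_{0} \not\equiv \hat{\Theta}_{\omega}(\ubar\chi)$, strict inequality holds on an open set; but the pointwise identity $(\omega + \sqrt{-1}\ubar\chi)^{n} = e^{\sqrt{-1}\Theta_{0}}\prod_{i}\sqrt{1+\lambda_{i}^{2}}\,\omega^{n}$ together with \hyperlink{L:4.1}{Lemma 4.1} force
\begin{align*}
\Arg \int_{X} e^{\sqrt{-1}\Theta_{0}}\prod_{i}\sqrt{1+\lambda_{i}^{2}}\,\omega^{n} = \hat{\Theta}_{\omega}(\ubar\chi),
\end{align*}
while all the phase values $\Theta_{0}(x)$ lie in an interval of length strictly less than $\pi$, so the argument of this positive-weight average must strictly exceed $\hat{\Theta}_{\omega}(\ubar\chi)$---a contradiction.

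The main difficulty is the bookkeeping in the second paragraph: verifying that both the $C$-subsolution property and the uniform supercritical lower bound on $h_{t}$ survive the entire continuity path, so that \hyperlink{T:1.1}{Theorem 1.1} can be invoked with constants independent of $t$. The remaining analytic work is already encapsulated in Section 3, and the sign of $b_{1}$ reduces cleanly to the strong maximum principle combined with \hyperlink{L:4.1}{Lemma 4.1}.
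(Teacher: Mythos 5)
Your continuity-path setup, the bounds $c_t\le 0$ and $c_t\ge -t\sup_X(\Theta_1-\Theta_0)$ from evaluating at extrema of $u_t$, and the verification that $\ubar\chi$ remains a $C$-subsolution all match the paper's proof. However, there is a genuine gap in how you conclude the uniform supercritical lower bound (your item (ii)). You combine $(1-t)\Theta_0 + t\Theta_1 \ge \Theta_0 > (n-2)\tfrac{\pi}{2} + 2\delta$ with $c_t \ge -t\|\Theta_1-\Theta_0\|_{C^0}$ and suggest "shrinking $\delta$ if necessary to dominate $\|\Theta_1 - \Theta_0\|_{C^0}$." This cannot work: as $\delta \to 0$, the construction forces $\Theta_1 \to \max\{\Theta_0, \hat\Theta_\omega(\ubar\chi)\}$, so $\|\Theta_1 - \Theta_0\|_{C^0}$ tends to $\sup_X\bigl(\hat\Theta_\omega(\ubar\chi) - \Theta_0\bigr)^{+}$, a fixed positive quantity determined by the data (indeed the construction requires $\hat\Theta_\omega(\ubar\chi) - \inf_X\Theta_0 > 4\delta$). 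So the crude bound $(n-2)\tfrac{\pi}{2} + 2\delta - t\|\Theta_1 - \Theta_0\|_{C^0}$ can easily fall below $(n-2)\tfrac{\pi}{2}$.

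The paper closes this by exploiting a near-cancellation that your argument discards. The second requirement on $\Theta_1$ (that $\Theta_1 \equiv \hat\Theta_\omega(\ubar\chi)$ near a minimizer $p$ of $\Theta_0$) gives $\inf_X \Theta_1 = \Theta_1(p) = \hat\Theta_\omega(\ubar\chi)$, and this together with $\Theta_0(p) = \inf_X \Theta_0$ shows the infimum of $(1-t)\Theta_0 + t\Theta_1$ is attained exactly at $p$. Then, by a set-decomposition argument, one shows $\Theta_1(p) - \Theta_0(p) \ge \sup_X(\Theta_1 - \Theta_0) - \delta$. Plugging into $\inf_X\bigl[(1-t)\Theta_0 + t\Theta_1 + b_t\bigr] = \Theta_0(p) + t\bigl(\Theta_1(p) - \Theta_0(p)\bigr) + b_t$ and using $b_t \ge -t\sup_X(\Theta_1 - \Theta_0)$ produces $\ge \Theta_0(p) - t\delta > (n-2)\tfrac{\pi}{2}$, where now only the small parameter $\delta$ appears rather than the large fixed norm. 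Without this localization at $p$ and the comparison $\Theta_1(p) - \Theta_0(p) \ge \sup(\Theta_1 - \Theta_0) - \delta$, the uniform phase bound does not close. Your extra strong-maximum-principle argument for the strictness $b_1 < 0$ is correct and is a nice addition not spelled out in the paper, but it does not repair the missing step above.
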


\begin{proof}
We use the method of continuity. Consider the family of equations 
\begin{align*}
\label{eq:4.4}
\Theta_\omega \left ( \ubar{\chi} + \sqrt{-1} \partial \bar{\partial} u_t \right ) = (1-t) \Theta_0 + t \Theta_1 + b_t. \tag{4.4}
\end{align*}Define $I \coloneqq \left \{  t \in [0, 1]: \exists \  (u_t, b_t)  \in C^{\infty}(X) \times \mathbb{R} \text{ solving (\ref{eq:4.4})}   \right \}$.
As usual, we try to show that $I$ is non-empty, open and closed. Then we get $I = [0, 1]$, which proves our \hyperlink{P:4.1}{Proposition 4.1}.

\begin{itemize}
\item $I$ is non-empty:\\
It is straightforward to see that $(0, 0) \in C^{\infty}(X) \times \mathbb{R}$ is a solution at time $t = 0$, so we have that $I$ is non-empty.
\item $I$ is open:\\
By \hyperlink{L:4.2}{Lemma 4.2}, the set $I$ is open.
\item $I$ is closed:\\
The closedness condition follows from the a priori estimates in \hyperlink{T:1.1}{Theorem 1.1} with a standard bootstrapping argument provided we can show 
\begin{itemize}
\item[1.] $\ubar{\chi}$ is a subsolution of equation (\ref{eq:4.4}) for all $t \in [0, 1]$.
\item[2.] $(1-t) \Theta_0 + t \Theta_1 + b_t > (n-2)\frac{\pi}{2}$ uniformly for $t \in [0, 1]$.
\end{itemize}
\end{itemize}
To prove the first point, recall \hyperlink{L:2.4}{Lemma 2.4}. We try to prove that
\begin{align*}
\sum_{i \neq j} \arctan \left (  \mu_i  \right ) > (1 - t) \Theta_0 + t \Theta_1 + b_t - \frac{\pi}{2},\quad  \forall j.
\end{align*}First, we claim that $b_t \leq t \sup_X \left ( \Theta_0 - \Theta_1  \right ) \leq 0$. By picking $q \in X$ such that $u_t$ achieves its maximum, the ellipticity implies
\begin{align*}
\Theta_0 (q) = \Theta_\alpha \left (   \ubar{\chi} \right ) (q) \geq   \Theta_\alpha \left (   \ubar{\chi}  + \sqrt{-1} \partial \bar{\partial} u_t \right ) (q) = (1-t) \Theta_0 (q) + t \Theta_1(q) + b_t.
\end{align*}Hence, $b_t \leq t \left (  \Theta_0 (q) - \Theta_1 (q)   \right ) \leq t \sup_X \left (  \Theta_0  - \Theta_1    \right ) \leq 0$, where the last inequality follows by the choice of \hyperlink{T1}{$\Theta_1$}.

Therefore, we have
\begin{align*}
\sum_{i \neq j} \arctan \left (  \mu_i   \right ) &> \max \left \{ \Theta_0, \hat{\Theta}_\omega \left ( \ubar{\chi} \right ) \right \} + 2 \delta - \frac{\pi}{2} > \Theta_1 - \frac{\pi}{2} \geq (1-t) \Theta_0 + t \Theta_1 + b_t - \frac{\pi}{2}.
\end{align*}So by \hyperlink{L:2.4}{Lemma 2.4}, $\ubar{\chi}$ is a subsolution of equation (\ref{eq:4.4}) for all $t \in [0, 1]$.\bigskip

Second, similarly, at a point where $u_t$ achieves its minimum, we find $b_t \geq -t \sup_X \left ( \Theta_1 - \Theta_0 \right )$. Note that with the second requirement of \hyperlink{T1}{$\Theta_1$}, we have
\begin{align*}
\label{eq:4.5}
(1-t) \Theta_0 (p) + t \hat{\Theta}_\omega \left ( \ubar{\chi} \right ) &= (1-t) \Theta_0 (p) + t \Theta_1(p) \geq \inf_X \left \{ (1-t)\Theta_0 + t \Theta_1  \right \} \tag{4.5} \\
&\geq (1-t) \inf_X \Theta_0 + t \inf_X \Theta_1   = (1-t)\Theta_0 (p) + t \hat{\Theta}_\omega \left ( \ubar{\chi} \right ).
\end{align*}Also, we claim that $\Theta_1 (p) - \Theta_0 (p) \geq \sup_X \left (  \Theta_1 - \Theta_0   \right ) - \delta$. To show this claim, first, we consider the set $U_1 \coloneqq \left \{ x \in X : \Theta_0 + \delta \leq \hat{\Theta}_\omega\left ( \ubar{\chi} \right ) - \delta \right \}$, on which we have $\hat{\Theta}_\omega\left ( \ubar{\chi} \right ) - \Theta_0 \leq \Theta_1 - \Theta_0 \leq \hat{\Theta}_\omega \left ( \ubar{\chi} \right ) - \Theta_0 + \delta$. On the set $U_2 \coloneqq \left \{ x \in X : \hat{\Theta}_\omega\left ( \ubar{\chi} \right ) + \delta \leq  \Theta_0  - \delta \right \}$, we have $0 \leq \Theta_1 - \Theta_0 \leq \delta$. Finally, on the set $U_3 \coloneqq \left \{ x \in X : | \Theta_0 - \hat{\Theta}_\omega\left ( \ubar{\chi} \right ) | < 2 \delta \right \}$, we have $\Theta_1 - \Theta_0 \leq \max \left \{ \Theta_0, \hat{\Theta}_\omega \left ( \ubar{\chi} \right )   \right \} + \delta - \Theta_0 \leq 3\delta$. Thus, we see
\begin{align*}
\label{eq:4.6}
\Theta_1 (p) - \Theta_0 (p) = \hat{\Theta}_\omega - \Theta_0(p) \geq \sup_X \left ( \Theta_1 - \Theta_0 \right ) - \delta. \tag{4.6}
\end{align*}

To summarize, by (\ref{eq:4.5}) and (\ref{eq:4.6}), we have
\begin{align*}
\label{eq:4.7}
\inf_X \left [  (1-t) \Theta_0 + t \Theta_1 + b_t  \right ] &= (1-t) \Theta_0 (p) + t \Theta_1 (p) + b_t = \Theta_0 (p) + t \left ( \Theta_1 (p) - \Theta_0 (p) \right ) + b_t \tag{4.7} \\
&\geq  \Theta_0 (p) + t \sup_X \left ( \Theta_1  - \Theta_0  \right ) + b_t - t \delta \geq \Theta_0 (p) - \delta  > (n-2) \frac{\pi}{2}. 
\end{align*}Thus, we see $(1-t) \Theta_0 + t \Theta_1 + b_t > (n-2) \frac{\pi}{2}$ uniformly for $t \in [0, 1]$. By \hyperlink{T:1.1}{Theorem 1.1} together with the usual Schauder estimates and bootstrapping argument, we can conclude that $I$ is closed. 

\end{proof}

\begin{proof}[Proof of \hyperlink{T:4.1}{Theorem 4.1}:] Let $\chi_1 \coloneqq \ubar{\chi} + \sqrt{-1} \partial \bar{\partial} u_1$, where $u_1$ is the function from \hyperlink{P:4.1}{Proposition 4.1}. Again, we consider the method of continuity, 
\begin{align*}
\label{eq:4.8}
\Theta_\omega \left ( \chi_1 + \sqrt{-1} \partial \bar{\partial}  v_t    \right ) &= (1 - t)\Theta_1 + t \hat{\Theta}_\omega \left ( \ubar{\chi} \right ) + c_t. \tag{4.8}
\end{align*}Define $J \coloneqq \left \{  t \in [0, 1]: \exists \  (v_t, c_t)  \in C^{\infty}(X) \times \mathbb{R} \text{ solving (\ref{eq:4.8})}   \right \}$. Same as before, we can see $J$ is non-empty and open. For the closedness part, we try to show that
\begin{itemize}
\item[1.] $\ubar{\chi}$ is a subsolution of equation (\ref{eq:4.8}) for all $t \in [0, 1]$.
\item[2.] $(1-t) \Theta_1 + t \hat{\Theta}_{\omega}\left ( \ubar{\chi}  \right ) + c_t > (n-2)\frac{\pi}{2}$ uniformly for $t \in [0, 1]$.
\end{itemize}As in the proof of \hyperlink{P:4.1}{Proposition 4.1}, it suffices to control $c_t$. By the definition of $\hat{\Theta}_{\omega} \left ( \ubar{\chi} \right )$ and \hyperlink{L:4.1}{Lemma 4.1}, for any $t \in [0, 1]$ we get
\begin{align*}
\label{eq:4.9}
\hat{\Theta}_\omega \left (  \ubar{\chi}  \right ) &= \Arg \int_X  \left ( \omega + \sqrt{-1} \ubar{\chi}   \right )^n  = \Arg \int_X  \left ( \omega + \sqrt{-1} \left (  \chi_1 + \sqrt{-1} \partial \bar{\partial} v_t  \right )   \right )^n  \tag{4.9} \\
&= \Arg \int_X  \sqrt{\frac{\det \eta_t}{\det \omega}} e^{i  \left (  \Theta_\omega \left ( \chi_1 + \sqrt{-1} \partial \bar{\partial}  v_t    \right )  \right ) } \omega^n \\
&= \Arg \left ( \int_X  \sqrt{\frac{\det \eta_t}{\det \omega}} e^{i \left (  (1-t)\Theta_1 + t \hat{\Theta}_{\omega} \left ( \ubar{\chi} \right ) + c_t   \right )} \omega^n \Big\slash \int_X \omega^n \right ),
\end{align*}where we set $\eta_t = \omega + \left ( \chi_1 + \sqrt{-1} \partial \bar{\partial} v_t \right ) \omega^{-1} \left ( \chi_1 + \sqrt{-1} \partial \bar{\partial} v_t \right )$.\bigskip

Since $\Theta_1 \geq \hat{\Theta}_\omega \left ( \ubar{\chi} \right )$, if $c_t > 0$, then for equation (\ref{eq:4.9}) we have
\begin{align*}
\hat{\Theta}_\omega \left (  \ubar{\chi}  \right ) 
&= \Arg \left ( \int_X  \sqrt{\frac{\det \eta_t}{\det \omega}} e^{i \left (  (1-t)\Theta_1 + t \hat{\Theta}_{\omega} \left ( \ubar{\chi} \right ) + c_t   \right )} \omega^n  \Big\slash \int_X \omega^n \right ) \\
&=  \int_X  \Arg \left (  \sqrt{\frac{\det \eta_t}{\det \omega}} e^{i \left (  (1-t)\Theta_1 + t \hat{\Theta}_{\omega} \left ( \ubar{\chi} \right ) + c_t   \right )} \omega^n \right ) \Big\slash \int_X \omega^n \\
&\geq  \int_X  \Arg \left (   e^{i \left (   \hat{\Theta}_{\omega} \left ( \ubar{\chi} \right ) + c_t   \right )} \omega^n \right ) \Big\slash \int_X \omega^n  >   \int_X  \Arg \left (   e^{i    \hat{\Theta}_{\omega} \left ( \ubar{\chi} \right )    } \omega^n \right ) \Big\slash \int_X \omega^n =   \hat{\Theta}_\omega \left (  \ubar{\chi}  \right ),
\end{align*}which gives us a contradiction. So, we have $c_t \leq 0$ for all $t \in [0, 1]$. Thus,
\begin{align*}
\sum_{i \neq j} \arctan \left (  \mu_i   \right ) &>   \Theta_1 - \frac{\pi}{2} \geq (1-t) \Theta_1 + t \hat{\Theta}_\omega \left ( \ubar{\chi} \right ) + c_t - \frac{\pi}{2}.
\end{align*}So we can conclude that $\ubar{\chi}$ is a subsolution of equation (\ref{eq:4.8}) for all $t \in [0, 1]$.\bigskip

Furthermore, for the fixed point $p \in X$ where $\Theta_0$ achieves minimum, our second requirement of \hyperlink{T1}{$\Theta_1$} combined with \hyperlink{P:4.1}{Proposition 4.1} implies $\hat{\Theta}_\omega \left ( \ubar{\chi} \right ) + b_1 = \Theta_1 (p) + b_1 > (n-2) \frac{\pi}{2}$. In particular, we get
\begin{align*}
\label{eq:4.10}
(1-t) \left [  \Theta_1 + b_1   \right ] + t \left [  \hat{\Theta}_{\omega} \left ( \ubar{\chi} \right ) + b_1  \right ] \geq  \hat{\Theta}_{\omega} \left ( \ubar{\chi} \right ) + b_1 > (n-2)\frac{\pi}{2}. \tag{4.10}
\end{align*}In order to show that $(1-t)\Theta_1 + t \hat{\Theta}_\omega \left ( \ubar{\chi} \right ) + c_t > (n-2) \frac{\pi}{2}$ uniformly, it suffices to show that $c_t \geq b_1$ for all $t$. If the minimum of $v_t$ is achieved at the point $q \in X$, then we have
\begin{align*}
\label{eq:4.11}
\Theta_1(q) + b_1 = \Theta_\omega \left (  \chi_1  \right )(q) \leq \Theta_\omega \left (  \chi_1 + \sqrt{-1} \partial \bar{\partial} v_t  \right ) (q) =    (1-t) \Theta_1 (q) + t \hat{\Theta}_\omega ( \ubar{\chi} ) + c_t. \tag{4.11}
\end{align*}Hence by rearranging equation (\ref{eq:4.11}), we get $c_t \geq b_1 + t \left [ \Theta_1 (q) - \hat{\Theta}_\omega ( \ubar{\chi} )    \right ] \geq b_1$. As a result, we can apply the a priori estimates in \hyperlink{T:1.1}{Theorem 1.1} uniformly in $t$ to conclude that $I$ is closed. The higher regularity follows in the usual way from the Schauder estimates and bootstrapping.\bigskip

By \hyperlink{L:4.1}{Lemma 4.1}, we can see that $c_1 = 0$, since
\begin{align*}
\hat{\Theta}_\omega \left (  \ubar{\chi}  \right ) &= \Arg \left ( \int_X  \sqrt{\frac{\det \eta_1}{\det \omega}} e^{i \left (   \hat{\Theta}_{\omega} \left ( \ubar{\chi} \right ) + c_1  \right )} \omega^n  \Big\slash \int_X \omega^n \right ) \\ 
&=  \int_X  \Arg \left (  \sqrt{\frac{\det \eta_1}{\det \omega}} e^{i \left (   \hat{\Theta}_{\omega} \left ( \ubar{\chi} \right ) + c_1   \right )} \omega^n \right ) \Big\slash \int_X \omega^n \\
&=  \int_X  \Arg \left (   e^{i \left (   \hat{\Theta}_{\omega} \left ( \ubar{\chi} \right ) + c_1   \right )} \omega^n \right ) \Big\slash \int_X \omega^n =   \hat{\Theta}_\omega \left (  \ubar{\chi}  \right ) + c_1.
\end{align*}

Therefore, there exists a smooth closed $(1,1)$-form $\chi \coloneqq \chi_1 + \sqrt{-1} \partial \bar{\partial} v_1 \in [ \chi_0 ]$ solving the dHYM equation
\begin{align*}
\Theta_\omega \left ( \chi \right ) &= \hat{\Theta}_\omega \left ( \ubar{\chi} \right ) + c_1= \hat{\Theta}_\omega \left ( \ubar{\chi} \right ).
\end{align*}

Now, to prove the uniqueness, let $\chi, \tilde{\chi} \in [\chi_0]$ be such that $\Theta_\omega \left ( \chi \right ) =   \hat{\Theta}_\omega \left ( \ubar{\chi} \right ) = \Theta_\omega \left ( \tilde{\chi} \right )$. Say $\chi \coloneqq \ubar{\chi} + \sqrt{-1} \partial \bar{\partial} u$ and  $\tilde{\chi} \coloneqq \ubar{\chi} + \sqrt{-1} \partial \bar{\partial} v$. By \hyperlink{L:3.3}{Lemma 3.3}, we get
\begin{align*}
\label{eq:4.12}
0 &= \int_{0}^1 \frac{d}{dt} \Theta_\omega \left (  t\chi + (1-t) \tilde{\chi}  \right ) dt  = \int_{0}^1 \left ( \left (  \Id + \Lambda_t^2 \right )^{-1}  \right )^i_k \omega^{k \bar{p}} (u-v)_{i \bar{p}}  dt \tag{4.12}  \\
&= \int_{0}^1 \left ( \left (  \Id + \Lambda_t^2 \right )^{-1}  \right )^i_k  dt \cdot \omega^{k \bar{p}} (u-v)_{i \bar{p}}  
\end{align*}where $\Lambda_t \coloneqq \omega^{-1} \left ( t\chi + (1-t) \tilde{\chi} \right )$. We can view (\ref{eq:4.12}) as an uniformly elliptic operator. Thus by strong maximum principle, we get $u - v$ is a constant. Hence $\chi = \tilde{\chi}$.

\end{proof}

As a corollary, we have the following result when complex dimension equals $2$.

\hypertarget{C:4.1}{\begin{fcor}}
Let $X$ be a compact complex surface equipped with a Hermitian metric $\omega$. Suppose that there exists a $C$-subsolution $\ubar{\chi} \coloneqq \chi_0 + \sqrt{-1} \partial \bar{\partial} \ubar{u}$ such that 
\begin{align*}
 \Theta_{\omega} \left ( \ubar{\chi} \right ) > 0.
\end{align*}Then there exists a pluriclosed Hermitian metric $\tilde{\omega}$ in the conformal class of $\omega$ and a unique smooth $(1,1)$-form $\chi \in [\chi_0]$ solving the deformed Hermitian--Yang--Mills equation
\begin{align*}
\Theta_{\tilde{\omega}} \left ( \chi \right ) = \hat{\Theta}_{\tilde{\omega}} \left ( \ubar{\chi} \right ) = \hat{\Theta}_{\tilde{\omega}} \left ( \chi_0 \right ).
\end{align*}
\end{fcor}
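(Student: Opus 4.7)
The strategy is to reduce Corollary 4.1 to \hyperlink{T:4.1}{Theorem 4.1} by choosing $\tilde{\omega}$ to be the Gauduchon representative in the conformal class of $\omega$, which on a complex surface is automatically pluriclosed. First, by Gauduchon's theorem \cite{gauduchon1977theoreme}, there exists a smooth real function $f$ on $X$, unique up to an additive constant, such that $\tilde{\omega} \coloneqq e^{f}\omega$ satisfies $\partial\bar{\partial}(\tilde{\omega}^{n-1}) = 0$. With $n=2$ this is precisely $\partial\bar{\partial}\tilde{\omega} = 0$, so $\tilde{\omega}$ is pluriclosed. The second hypothesis $\partial\bar{\partial}(\tilde{\omega}^{2}) = 0$ of \hyperlink{T:4.1}{Theorem 4.1} is then automatic: on a complex $2$-manifold, $\tilde{\omega}^{2}$ is of top bidegree $(2,2)$, so $\partial\bar{\partial}(\tilde{\omega}^{2})$ would be a $(3,3)$-form and vanishes identically.

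Next I transfer the hypotheses on $\ubar{\chi}$ from $\omega$ to $\tilde{\omega}$. Writing $\mu_{1} \geq \mu_{2}$ for the eigenvalues of $\omega^{-1}\ubar{\chi}$, the eigenvalues of $\tilde{\omega}^{-1}\ubar{\chi}$ are $\tilde{\mu}_{i} = e^{-f}\mu_{i}$. The elementary identity $\arctan a + \arctan b > 0 \Leftrightarrow a + b > 0$, valid for all real $a, b$, shows that $\Theta_{\omega}(\ubar{\chi}) > 0$ is equivalent to pointwise positivity of $\mu_{1} + \mu_{2} = \Tr_{\omega}\ubar{\chi}$, which is preserved under multiplication by the positive function $e^{-f}$; hence $\Theta_{\tilde{\omega}}(\ubar{\chi}) > 0 = (n-2)\pi/2$. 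The $C$-subsolution property for the rescaled equation $\Theta_{\tilde{\omega}}(\chi) = \hat{\Theta}_{\tilde{\omega}}(\chi_{0})$ is then checked via \hyperlink{L:2.4}{Lemma 2.4}, i.e.\ by verifying $\arctan\tilde{\mu}_{l}(x) > \hat{\Theta}_{\tilde{\omega}}(\chi_{0}) - \pi/2$ for $l = 1, 2$ and all $x \in X$. With these hypotheses in hand I apply \hyperlink{T:4.1}{Theorem 4.1} to $\tilde{\omega}$ to produce the unique $\chi \in [\chi_{0}]$ solving $\Theta_{\tilde{\omega}}(\chi) = \hat{\Theta}_{\tilde{\omega}}(\chi_{0})$, and \hyperlink{L:4.1}{Lemma 4.1} identifies $\hat{\Theta}_{\tilde{\omega}}(\chi_{0}) = \hat{\Theta}_{\tilde{\omega}}(\ubar{\chi})$, which is the stated conclusion. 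Uniqueness is inherited from \hyperlink{T:4.1}{Theorem 4.1}.

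The main obstacle is the transfer of the $C$-subsolution property. The supercritical phase condition reduces to positivity of a trace and passes to $\tilde{\omega}$ at once, but confirming the full pointwise eigenvalue inequality from \hyperlink{L:2.4}{Lemma 2.4} against the new constant $\hat{\Theta}_{\tilde{\omega}}(\chi_{0})$ requires simultaneously tracking the rescaling of the $\mu_{l}$ by the conformal factor $e^{-f}$ and the accompanying change in the Bott--Chern phase constant. On a surface only $\tilde{\mu}_{2}$ is binding, so the constraint is one-dimensional and elementary, but this is the one place where the specific conformal geometry enters the argument in an essential way.
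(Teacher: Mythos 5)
Your proof follows the paper's route: take the Gauduchon representative $\tilde{\omega}$ in the conformal class of $\omega$ (pluriclosed on a surface since $\partial\bar{\partial}\tilde{\omega}^{n-1}=0$ with $n=2$), observe that $\partial\bar{\partial}(\tilde{\omega}^{2})=0$ holds trivially in top degree, verify the supercritical phase condition $\Theta_{\tilde{\omega}}(\ubar{\chi})>0$, and invoke \hyperlink{T:4.1}{Theorem 4.1} together with \hyperlink{L:4.1}{Lemma 4.1}. The one place you genuinely differ is the positivity of $\Theta_{\tilde{\omega}}(\ubar{\chi})$, and your route is the cleaner one. The paper rescales the eigenvalues by the conformal factor and runs a first/second-variation argument in the scalar variable, asserting $\Theta_{\tilde{\omega}}(\ubar{\chi})\geq \min_{c\in\{1,m,M\}}\{\arctan(c\lambda_1)+\arctan(c\lambda_2)\}>0$; as stated this is a little awkward, since the set $\{1,m,M\}$ does not account for the possible interior critical point at $c_{*}^{2}=-1/(\lambda_1\lambda_2)$ when $\lambda_1\lambda_2<0$, and the paper also records the rescaled eigenvalues as $e^{g}\lambda_i$ when they should be $e^{-g}\lambda_i$ (you have the sign right). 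Your elementary equivalence $\arctan a+\arctan b>0\Leftrightarrow a+b>0$, valid for all real $a,b$, sidesteps both issues at once: for $n=2$ the phase condition reduces to $\Tr_\omega\ubar{\chi}>0$, which is manifestly preserved by any positive conformal factor. One caveat: you flag the transfer of the \hyperlink{L:2.4}{Lemma 2.4} subsolution inequality to $\tilde{\omega}$ and the new constant $\hat{\Theta}_{\tilde{\omega}}(\chi_0)$ as the remaining compatibility to check, but you do not actually carry it out. The paper's own proof does not verify it either, so you are at least as careful as the source; still, if you want your argument to be self-contained you should display the inequality $\arctan\tilde{\mu}_{l}>\hat{\Theta}_{\tilde{\omega}}(\chi_0)-\pi/2$, $l=1,2$, rather than only asserting it is elementary.
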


\begin{proof}
Here, to find such a Hermitian metric $\tilde{\omega}$, by Gauduchon \cite{gauduchon1977theoreme}, there exists a function $g \colon X \rightarrow \mathbb{R}$ such that the Hermitian metric $\tilde{\omega}\coloneqq e^g \omega$ satisfies 
\begin{align*}
\partial \bar{\partial} \left (   \tilde{\omega}  \right ) &= \partial \bar{\partial} \left (  e^g \omega  \right ) = 0.
\end{align*}Moreover, since $X$ is a surface, so we automatically have $\partial \bar{\partial} \left ( \tilde{\omega}^2 \right ) = 0$. At $x \in X$, we have
\begin{align*}
 \Theta_{\tilde{\omega}} \left ( \ubar{\chi} \right ) (x) = \arctan(e^g(x) \lambda_1) + \arctan(e^g(x) \lambda_2), 
\end{align*}where $\lambda_1, \lambda_2$ are the eigenvalues of $\omega^{-1}\ubar{\chi}$ at $x$. By doing the first and second variation, one can check that
\begin{align*}
 \Theta_{\tilde{\omega}} \left ( \ubar{\chi} \right ) (x)  &\geq \min_{c \in \{1, m, M\}} \left \{ \arctan(  c   \lambda_1) + \arctan(  c  \lambda_2)    \right \} > 0,
\end{align*}where $m = \inf_X e^g$ and $M = \sup_X e^g$. As a consequence, by applying \hyperlink{T:4.1}{Theorem 4.1}, we can find $\chi \in [\chi_0]$ solving the dHYM equation 
\begin{align*}
\Theta_{\tilde{\omega}} \left ( \chi \right ) = \hat{\Theta}_{\tilde{\omega}} \left ( \ubar{\chi} \right ) = \hat{\Theta}_{\tilde{\omega}} \left ( \chi_0 \right ).
\end{align*}

\end{proof}

Moreover, we have the following result for some non-Kähler compact complex surfaces diffeomorphic to solvmanifolds.

\begin{fcor}
Let $X$ be either a Inoue surfaces or a secondary Kodaira surface with a Hermitian metric $\omega$. Then for any $[\chi_0] \in H^{1,1}_{\text{BC}}(X; \mathbb{R})$, there exists a pluriclosed Hermitian metric $\tilde{\omega}$ in the same conformal class of $\omega$ and a unique smooth $(1,1)$-form $\chi \in [\chi_0]$ solving the deformed Hermitian--Yang--Mills equation
\begin{align*}
\Theta_{\tilde{\omega}} \left ( \chi \right ) = \hat{\Theta}_{\tilde{\omega}} \left ( \chi \right ).
\end{align*} 
\end{fcor}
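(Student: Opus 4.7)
The plan is to reduce this corollary to the preceding \hyperlink{C:4.1}{Corollary~4.1} by producing, for every Bott--Chern class, a $C$-subsolution with positive phase sum. Since $X$ is a complex surface, Gauduchon's theorem supplies a smooth function $g$ such that $\tilde{\omega} \coloneqq e^g \omega$ is pluriclosed, and the auxiliary requirement $\partial\bar{\partial}\tilde{\omega}^2 = 0$ is then automatic in complex dimension two. Thus $\tilde{\omega}$ is a legitimate target metric for \hyperlink{T:4.1}{Theorem~4.1} and, via the reduction done in \hyperlink{C:4.1}{Corollary~4.1}, it suffices to verify the existence of a $C$-subsolution with $\Theta_{\tilde{\omega}}(\ubar{\chi}) > 0$ for every $[\chi_0] \in H^{1,1}_{\text{BC}}(X;\mathbb{R})$.

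The central structural observation is that Inoue surfaces and secondary Kodaira surfaces are quotients of simply-connected solvable Lie groups by discrete cocompact subgroups, equipped with left-invariant complex structures. Via a Nomizu--Hattori-type identification, combined with a symmetrization/averaging argument over the group action, the Bott--Chern cohomology $H^{1,1}_{\text{BC}}(X;\mathbb{R})$ can be read off from the invariant Bott--Chern cohomology of the underlying Lie algebra, and one may simultaneously assume that both $\tilde{\omega}$ and every class representative $\chi_0$ are left-invariant. With this in hand, the eigenvalues of $\tilde{\omega}^{-1}\chi_0$ are constant on $X$, and the $C$-subsolution condition of \hyperlink{L:2.4}{Lemma~2.4} collapses to a single pointwise algebraic inequality.

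For Inoue surfaces, a direct computation of the invariant Bott--Chern cohomology shows that $H^{1,1}_{\text{BC}}(X;\mathbb{R})$ vanishes in bidegree $(1,1)$, so $[\chi_0] = 0$ for every class, and the trivial form $\chi = 0$ lies in $[\chi_0]$ and solves the equation tautologically: $\Theta_{\tilde{\omega}}(0) = 0$, while $\hat{\Theta}_{\tilde{\omega}}(0) = \Arg \int_X \tilde{\omega}^2 = 0$ by our branch convention. For secondary Kodaira surfaces, the invariant Bott--Chern cohomology is spanned by explicit classes coming from the underlying nilmanifold structure; using a left-invariant coframe to simultaneously diagonalize $\tilde{\omega}$ and an invariant representative $\chi_0$, one verifies the inequality of \hyperlink{L:2.4}{Lemma~2.4} by direct algebra on the constant eigenvalues, exploiting the freedom to modify $\chi_0$ by an invariant $\sqrt{-1}\partial\bar{\partial}$-exact form within its Bott--Chern class. \hyperlink{C:4.1}{Corollary~4.1} then produces the desired $\chi \in [\chi_0]$, and the strong maximum principle argument at the end of the proof of \hyperlink{T:4.1}{Theorem~4.1} supplies uniqueness.

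The main obstacle is the cohomological bookkeeping: because the $\partial\bar{\partial}$-lemma fails on these non-K\"ahler solvmanifolds, one cannot invoke Nomizu's classical theorem for de Rham cohomology as a black box, and must instead verify, case by case from the explicit Lie algebra structure constants, that every Bott--Chern class admits a left-invariant representative and that, in the secondary Kodaira case, this invariant representative can be arranged so that the pointwise $C$-subsolution inequality is strict. Once those ingredients are in place, the reduction to \hyperlink{C:4.1}{Corollary~4.1} is immediate.
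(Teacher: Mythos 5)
Your proposal has two serious gaps that prevent it from working.

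First, the claim that for Inoue surfaces ``$H^{1,1}_{\text{BC}}(X;\mathbb{R})$ vanishes in bidegree $(1,1)$'' is false. By Angella--Dloussky--Tomassini \cite{angella2016bott} (which the paper cites and uses), $H^{1,1}_{\text{BC}}$ is \emph{one-dimensional} for both types of Inoue surface, generated by $\sqrt{-1}\,\varphi^2 \wedge \bar{\varphi}^2$ (and by $\sqrt{-1}\,\varphi^1 \wedge \bar{\varphi}^1$ for secondary Kodaira surfaces). You may have been conflating the Bott--Chern group with $H^2_{dR}$, which does vanish for Inoue surfaces; but $H^{p,q}_{\text{BC}} \to H^{p+q}_{dR}$ is not injective for non-K\"ahler manifolds, precisely because the $\partial\bar{\partial}$-lemma fails. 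The entire nontrivial content of the corollary is the case $[\chi_0] \neq 0$, which your argument declares vacuous.

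Second, your reduction assumes that after averaging one may take both $\tilde{\omega}$ and $\chi_0$ to be left-invariant, so that the eigenvalues of $\tilde{\omega}^{-1}\chi_0$ are constant. This is not available: the corollary's hypothesis takes an \emph{arbitrary} Hermitian metric $\omega$, and the Gauduchon conformal factor $e^g$ with $\tilde{\omega} = e^g\omega$ is a solution of an elliptic PDE that has no reason to be left-invariant. Indeed, the paper's proof does \emph{not} need any invariance of the metric. The correct mechanism, once one knows the explicit generator $\sqrt{-1}\,\varphi^k \wedge \bar{\varphi}^k$ of $H^{1,1}_{\text{BC}}$, is much simpler: this is a nonnegative $(1,1)$-form of rank one, so its trace with respect to \emph{any} Hermitian metric $\omega = \sqrt{-1}\,\omega_{i\bar{j}}\varphi^i\wedge\bar{\varphi}^j$ is
\begin{align*}
\Tr_\omega\bigl(c\sqrt{-1}\,\varphi^2\wedge\bar{\varphi}^2\bigr) = c\,\frac{\omega_{1\bar{1}}}{\omega_{1\bar{1}}\omega_{2\bar{2}} - \omega_{1\bar{2}}\omega_{2\bar{1}}},
\end{align*}
which is positive when $c > 0$ by positive-definiteness of $(\omega_{i\bar{j}})$, without any symmetry of $\omega$. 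In complex dimension two, $\lambda_1 + \lambda_2 > 0$ is equivalent to $\arctan\lambda_1 + \arctan\lambda_2 > 0$, so the generator (or its negative) is a $C$-subsolution with positive phase for every Hermitian $\omega$, and Corollary~4.1 then applies. The $c=0$ case is trivial. Your secondary Kodaira argument is also unjustified for the same reason: you cannot appeal to ``constant eigenvalues'' without invariance of the metric.

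So while your high-level outline (Gauduchon metric, reduce to Corollary~4.1, verify $C$-subsolution) is correct, the cohomological input is misstated and the invariance assumption is both unjustified and unnecessary.
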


\begin{proof}
Since $X$ is a compact surface diffeomorphic to  a solvmanifold $\Gamma \backslash G$, it is endowed with a left-invariant complex structure.\bigskip

In each case, by taking a basis $\{e_1, e_2, e_3, e_4\}$ of the Lie algebra $\mathcal{G}$ associated to $G$, we have the following commutation relations (see \cite{hasegawa2005complex}). \\
{\em Inoue surface} of type $\mathcal{S}_M$. Differentiable structure:
\begin{align*}
[e_1, e_4] = -\alpha e_1 + \beta e_2,\quad [e_2, e_4] = -\beta e_1 - \alpha e_2, \quad [e_3, e_4] = 2\alpha e_3,
\end{align*}where $\alpha \in \mathbb{R}  \slash \{0\}$ and $\beta \in \mathbb{R}$.\\
{\em Inoue surface} of type $\mathcal{S}^{\pm}$. Differentiable structure:
\begin{align*}
[e_2, e_3] = - e_1,\quad [e_2, e_4] = -e_2, \quad [e_3, e_4] = e_3.
\end{align*}
{\em Secondary Kodaira} surface. Differentiable structure:
\begin{align*}
[e_1, e_2] = - e_3,\quad [e_1, e_4] = e_2, \quad [e_2, e_4] = -e_1.
\end{align*}
Denote by $\{ e^1, e^2, e^3, e^4 \}$ the dual basis of $\{ e_1, e_2, e_3, e_4 \}$. For {\em Inoue surface} of type $\mathcal{S}_M$ and {\em Secondary Kodaira} surface, we consider the $G$-left invariant almost-complex structure $J$ on $X$ defined by 
\begin{align*}
J e_1 \coloneqq e_2, \quad J e_2 \coloneqq - e_1, \quad J e_3 \coloneqq e_4, \quad J e_4 \coloneqq - e_3.
\end{align*}The $G$-left invariant $(1,0)$-forms are
\begin{align*}
\varphi^1 \coloneqq e^1 + \sqrt{-1} e^2, \quad \varphi^2 \coloneqq e^3 + \sqrt{-1} e^4.
\end{align*}\bigskip

For {\em Inoue surface} of type $\mathcal{S}^{\pm}$, we consider the $G$-left invariant almost-complex structure $J$ on $X$ defined by 
\begin{align*}
J e_1 \coloneqq e_2, \quad J e_2 \coloneqq - e_1, \quad J e_3 \coloneqq e_4 - q e_2, \quad J e_4 \coloneqq - e_3 - q e_1,
\end{align*}where $q \in \mathbb{R}$. The $G$-left invariant $(1,0)$-forms are
\begin{align*}
\varphi^1 \coloneqq e^1 + \sqrt{-1} e^2 + \sqrt{-1} q e^4, \quad \varphi^2 \coloneqq e^3 + \sqrt{-1} e^4.
\end{align*}

By Angella--Dloussky--Tomassini \cite{angella2016bott}, we have
\begin{center}
\begin{tabular}{llr}  
\cmidrule(r){1-3}
Non-Kähler compact complex surface    & $H^{1,1}_{\text{BC}}$ & $\dim_{\mathbb{C}} H^{1,1}_{\text{BC}}$ \\
\midrule
Inoue surface of type $\mathcal{S}_M$      & $\left \langle  \varphi^{2} \wedge \bar{\varphi}^2  \right \rangle$    & 1      \\
Inoue surface of type $\mathcal{S}^{\pm}$        & $\left \langle  \varphi^{2} \wedge \bar{\varphi}^2  \right \rangle$     & 1      \\
Secondary Kodaira       & $\left \langle  \varphi^{1} \wedge \bar{\varphi}^1  \right \rangle$     & 1      \\
\bottomrule
\end{tabular}
\end{center}

We prove the case when $X$ is a {\em Inoue surface} of type $\mathcal{S}_M$; the other cases are also similar. Since $\dim_{\mathbb{C}} H^{1,1}_{\text{BC}}(X) = 1$, we see that $[\chi_0] = c \cdot \sqrt{-1} \left [ \varphi^2 \wedge \bar{\varphi}^2 \right ]$, where $c \in \mathbb{R}$. If $c = 0$, then by Gauduchon \cite{gauduchon1977theoreme}, there exists a function $g \colon X \rightarrow \mathbb{R}$ such that the Hermitian metric $\tilde{\omega}\coloneqq e^g \omega$ satisfies 
\begin{align*}
\partial \bar{\partial} \left (   \tilde{\omega}  \right ) &= \partial \bar{\partial} \left (  e^g \omega  \right ) = 0.
\end{align*}Thus, $\Theta_{\tilde{\omega}} \left ( 0 \right ) = 0 = \hat{\Theta}_{\tilde{\omega}} \left (  \chi_0  \right )$, which solves the deformed Hermitian--Yang--Mills equation. The case $c < 0$ and the case $c > 0$ only differ by a sign. Let us consider the case $c > 0$. We claim that $c \sqrt{-1} \varphi^2 \wedge \bar{\varphi}^2$ is a $C$-subsolution. Let $\Lambda = \omega^{-1} \left (  c \sqrt{-1} \varphi^2 \wedge \bar{\varphi}^2  \right )$ and $\{ \lambda_1, \lambda_2 \}$ be the eigenvalues of $\Lambda$ with $\lambda_1 \geq \lambda_2$. Now, we wish to apply \hyperlink{C:4.1}{Corollary 4.1}, so we try to show that
\begin{align*}
\Theta_\omega \left (  c \sqrt{-1} \varphi^2 \wedge \bar{\varphi}^2  \right ) = \arctan{\lambda_1} + \arctan{\lambda_2} > 0.
\end{align*}This is equivalent to $\Tr_{\omega} \left ( c \sqrt{-1} \varphi^2 \wedge \bar{\varphi}^2 \right ) = \lambda_1 + \lambda_2 > 0$ when the complex dimension equals $2$. We have
\begin{align*}
\Tr_\omega \left (  c \sqrt{-1} \varphi^2 \wedge \bar{\varphi}^2  \right )  &=  \frac{2 c \sqrt{-1} \varphi^2 \wedge \bar{\varphi}^2 \wedge \omega}{\omega^2} = \frac{-2 c \cdot \omega_{1 \bar{1}} \varphi^{1 \bar{1} 2 \bar{2}}}{-2 \left ( \omega_{1 \bar{1}} \omega_{2 \bar{2}}  - \omega_{1 \bar{2}} \omega_{2 \bar{1}}  \right ) \varphi^{1 \bar{1} 2 \bar{2}}} \\
&=  c \frac{ \omega_{1 \bar{1}} }{ \left ( \omega_{1 \bar{1}} \omega_{2 \bar{2}}  - \omega_{1 \bar{2}} \omega_{2 \bar{1}}  \right ) },
\end{align*}where $\varphi^{1 \bar{1} 2 \bar{2}} = \varphi^1 \wedge \bar{\varphi}^1 \wedge \varphi^2 \wedge \bar{\varphi}^2$ for convenience. Here, we use the $G$-left invariant $(1, 0)$-forms $\varphi^1, \varphi^2$ and write $\omega = \sqrt{-1} \omega_{i \bar{j}} \varphi^i \wedge \bar{\varphi}^j$. Since $\omega$ is a Hermitian metric, we have
\begin{align*}
\omega_{1 \bar{1}} > 0; \quad  \omega_{1 \bar{1}} \omega_{2 \bar{2}}  - \omega_{1 \bar{2}} \omega_{2 \bar{1}}   > 0,
\end{align*}which completes the proof.
\end{proof}



\Address

\end{document}